\def\revddots{\mathinner{\mkern1mu\raise\p@
    \vbox{\kern7\p@\hbox{.}}\mkern2mu
    \raise4\p@\hbox{.}\mkern2mu\raise7\p@\hbox{.}\mkern1mu}}
\active\gdef@{\mkern1mu}}
\newcommand{\beq}{\begin{equation}}
\newcommand{\eeq}{\end{equation}}
\newcommand{\be}{\begin{enumerate}}
\newcommand{\ee}{\end{enumerate}}
\newcommand{\bi}{\begin{itemize}}
\newcommand{\ei}{\end{itemize}}
\def\C{\mathbb{C}}
\def\AA{\mathcal{A}}
\def\BB{\mathcal{B}}
\def\norm2#1{\|#1\|_2}
\numberwithin{equation}{section}
\numberwithin{table}{section}
\numberwithin{figure}{section}
\newtheorem{theorem}{Theorem}[section]
\newtheorem{lemma}[theorem]{Lemma}
\newtheorem{definition}[theorem]{Definition}
\theoremstyle{definition}
\newtheorem{numexamp}[theorem]{Numerical experiment}
\newtheorem{remark}[theorem]{Remark}
\newcommand{\bmat}{\begin{bmatrix}}
\newcommand{\ebmat}{\end{bmatrix}}
\newcommand{\pmat}{\begin{pmatrix}}
\newcommand{\epmat}{\end{pmatrix}}
\newcommand{\smat}{\begin{smallmatrix}}
\newcommand{\esmat}{\end{smallmatrix}}
\mathchardef\Gamma="7100
\mathchardef\Delta="7101
\mathchardef\Theta="7102
\mathchardef\Lambda="7103
\mathchardef\Xi="7104
\mathchardef\Pi="7105
\mathchardef\Sigma="7106
\mathchardef\Upsilon="7107
\mathchardef\Phi="7108
\mathchardef\Psi="7109
\mathchardef\Omega="710A
\newcommand{\cA}{{\cal A}}
\newcommand{\cB}{{\cal B}}
\newcommand{\cK}{{\cal K}}
\newcommand{\cL}{{\cal L}}
\newcommand{\cO}{{\cal O}}
\newcommand{\cP}{{\cal P}}
\newcommand{\cS}{{\cal S}}
\newcommand{\cU}{{\cal U}}
\newcommand{\cV}{{\cal V}}
\newcommand{\bfR}{{\mathbf R}}
\newcommand{\BC}{{\mathbb C}}
\newcommand{\BR}{{\mathbb R}}
\newcommand{\UU}{{\mathbf U}}
\newcommand{\bfA}{{\mathbf A}}
\newcommand{\bfB}{{\mathbf B}}
\newcommand{\bfAA}{{\mathbf {\mathcal{A}}}}
\newcommand{\bfBB}{{\mathbf {\mathcal{B}}}}
\newcommand{\eq} [1] {\begin{equation}\label{#1}}
\newcommand{\en} {\end{equation}}
\newcommand {\eqn}  {\begin{eqnarray}}
\newcommand {\enn}  {\end{eqnarray}}
\newcommand {\bstar}    {\begin{eqnarray*}}
\newcommand {\estar}    {\end{eqnarray*}}
\newcommand {\mat}  [1] {\left[\begin{array}{#1}}
\newcommand {\rix}      {\end{array}\right]}
\newcommand{\la}{\lambda}
\newif\ifMDlatex
\def\MD@us#1{\csname#1style\endcsname}
\def\MD@uf#1{\csname#1font\endcsname}
\def\MD@t{text}
\def\MD@s{script}
\def\MD@ss{scriptscript}
\newdimen\MD@unit
\def\MD@changestyle#1{
  \relax\MD@unit0.1\fontdimen6\MD@uf{#1}0
  \everymath\expandafter{\the\everymath\MD@us{#1}}
}
\def\MD@dot{$\m@th\ldotp$}
\def\MD@palette#1{\mathchoice{#1\MD@t}{#1\MD@t}{#1\MD@s}{#1\MD@ss}}
\def\MD@ddots#1{{\MD@changestyle{#1}%
  \mkern1mu\raise7\MD@unit\vbox{\kern7\MD@unit\hbox{\MD@dot}}%
  \mkern2mu\raise4\MD@unit\hbox{\MD@dot}%
  \mkern2mu\raise \MD@unit\hbox{\MD@dot}\mkern1mu}}%
\def\MD@iddots#1{{\MD@changestyle{#1}%
  \mkern1mu\raise \MD@unit\hbox{\MD@dot}%
  \mkern2mu\raise4\MD@unit\hbox{\MD@dot}%
  \mkern2mu\raise7\MD@unit\vbox{\kern7\MD@unit\hbox{\MD@dot}}}}%
\def\MD@vdots#1{\vbox{\MD@changestyle{#1}%
    \baselineskip4\MD@unit\lineskiplimit\z@
    \kern6\MD@unit\hbox{\MD@dot}\hbox{\MD@dot}\hbox{\MD@dot}}}%
  \DeclareRobustCommand\ddots{\mathinner{\MD@palette\MD@ddots}}%
  \DeclareRobustCommand\iddots{\mathinner{\MD@palette\MD@iddots}}%
  \DeclareRobustCommand\vdots{\mathinner{\MD@palette\MD@vdots}}%
  \def\ddots{\mathinner{\MD@palette\MD@ddots}}%
  \def\iddots{\mathinner{\MD@palette\MD@iddots}}%
  \def\vdots{\mathinner{\MD@palette\MD@vdots}}%
\newcommand{\comm}[1]{}
\newcommand {\mycomment}[1]{} 
\active\gdef@{\mkern1mu}}
\begin{document}
\title{A compact rational Krylov method for large-scale rational eigenvalue problems}
%
\author{
        Froil\'{a}n M. Dopico%
        \thanks{%
                Departamento de Matem\'{a}ticas,
                Universidad Carlos III de Madrid,
                Avda.\ Universidad 30, 28911
                Legan\'{e}s, Spain
                ({\tt dopico@math.uc3m.es}).
                Supported by  ``Ministerio de Econom\'{i}a, Industria y Competitividad of Spain'' and ``Fondo Europeo de Desarrollo Regional (FEDER) of EU''
                through grants MTM2012-32542, MTM2015-65798-P and MTM2015-68805-REDT.
               }
   \and
        Javier Gonz\'{a}lez-Pizarro%
        \thanks{%
                Departamento de Matem\'{a}ticas,
                Universidad Carlos III de Madrid,
                Avda.\ Universidad 30, 28911
                Legan\'{e}s, Spain
                ({\tt jagpizar@math.uc3m.es}).
                Supported by ``Ministerio de Econom\'{i}a, Industria y Competitividad of Spain'' and ``Fondo Europeo de Desarrollo Regional (FEDER) of EU''
                through grants MTM2012-32542 and MTM2015-65798-P and by ``Comisión Nacional de Investigación Científica y Tecnológica (CONICYT) de Chile"  through grant BCH 72160331.
               }
               }

\date{\today}
\maketitle

\begin{abstract}
In this work, we propose a new method, termed as R-CORK, for the numerical solution of large-scale rational eigenvalue problems, which is based on a linearization and on a compact decomposition of the rational Krylov subspaces corresponding to this linearization. R-CORK is an extension of the compact rational Krylov method (CORK) introduced very recently in \cite{cork} to solve a family of non-linear eigenvalue problems that can be expressed and linearized in certain particular ways and which include arbitrary polynomial eigenvalue problems, but not arbitrary rational eigenvalue problems. The R-CORK method exploits the structure of the linearized problem by representing the Krylov vectors in a compact form in order to reduce the cost of storage, resulting in a method with two levels of orthogonalization. The first level of orthogonalization works with vectors of the same size that the original problem, and the second level works with vectors of size much smaller than the original problem. Since vectors of the size of the linearization are never stored or orthogonalized, R-CORK is more efficient from the point of views of memory and orthogonalization than the classical rational Krylov method applied directly to the linearization. Taking into account that the R-CORK method is based on a classical rational Krylov method, to implement implicit restarting is also possible and we show how to do it in a memory efficient way. Finally, some numerical examples are included in order to show that the R-CORK method performs satisfactorily in practice.
\end{abstract}

{\small
{\bf Key words.} large-scale, linearization, rational eigenvalue problem, rational Krylov method
 \\

{\bf AMS subject classification.} 65F15, 65F50, 15A22}


\section{Introduction} \label{sect.intro}
In this work, we consider the rational eigenvalue problem (REP)

\begin{equation}\label{rateig}
R(\lambda)x=0,
\end{equation}
where $R(\lambda) \in \BC(\lambda)^{n\times n}$ is a nonsingular rational matrix, i.e., the entries of $R(\la)$ are scalar rational functions in the variable $\la$ with complex coefficients and det$(R(\lambda))\not\equiv 0$ is not identically zero, and $x\in \BC^n$ is a nonzero vector. More precisely, we consider that $R(\la)$ is given as
\begin{equation}\label{formrat}
R(\lambda)=P(\lambda)-\sum_{i=1}^k\dfrac{f_i(\lambda)}{g_i(\lambda)}E_i,
\end{equation}
where $P(\lambda) \in \BC[\lambda]^{n\times n}$ is a matrix polynomial of degree $d$ in the variable $\lambda$, $f_i(\lambda)$, $g_i(\lambda)$ are coprime scalar polynomials of degrees $m_i$ and $n_i$, respectively, $m_i < n_i$ and $E_i\in \BC^{n \times n}$ are constant matrices for $i=1,\dots,k$. We emphasize that it is well known that every rational matrix can be written in the form \eqref{formrat} \cite{kailath,rosenbrock-book} (see also \cite[Section 2]{ADMZ2016}) and that such form appears naturally in many applications \cite{SuBai}.

The REP has attracted considerable interest in recent years since it arises in different applications in some fields such as vibration of fluid-solid structures \cite{SolVib}, optimization of acoustic emissions of high speed trains \cite{HighTrain}, free vibration of plates with elastically attached masses \cite{solovev}, free vibrations of a structure with a viscoelastic constitutive relation describing the behavior of a material \cite{visco,mvoss2016}, and electronic structure calculations of quantum dots \cite{quantum2, quantum1}.

A first idea to solve REPs is a brute-force approach, since one can multiply by $\prod_{i=1}^k{g_i(\lambda)}$ to turn the rational matrix \eqref{formrat}  into a matrix polynomial of degree $d+n_1+\cdots+n_k$.  The common approach to solve a polynomial eigenvalue problem (PEP) is via linearization (see, for instance, \cite{PEP2, mackey, visco}), this is, by transforming the PEP into a generalized eigenvalue problem (GEP) and then applying a well-established algorithm to this GEP, as for instance the QZ algorithm in the case of dense medium sized problems \cite{golub-book} or some Krylov subspace method for large-scale problems. However, this brute-force approach it is only useful when $n_1+n_2+\cdots+n_k$ is small compared with $d$.
So, if $k$ or some $n_i$ are big, then the degree of the matrix polynomial associated to the problem is also big, and this makes the size of the linearization too large, which is impractical for medium to large-scale problems. This drawback has motivated the idea of linearizing directly the REP \cite{SuBai}. The linearization for $R(\lambda)$ in \eqref{formrat} constructed in \cite{SuBai} has a size much smaller than the size of the linearization obtained by the brute-force approach. Nonetheless the increase of the size of the problem is still considerable, so for large-scale rational eigenvalue problems, a direct application of this approach, i.e., without taking into account the structure of the linearization, is also impractical. This idea of taking advantage of the structure of the linearization for solving large-scale REPs is closely connected to the intense research effort developed in the last years by different authors for solving large-scale PEPs via linearizations and that is briefly discussed in the next paragraph.

Several methods have been developed to solve large-scale PEPs numerically by applying Krylov methods to the associated GEPs obtained through linearizations. In this approach, the key issues to be solved for using Krylov methods for large-scale PEPs are the increase of the memory cost and the increase of the orthogonalization cost at each step, as a consequence of the increase of the size of the linearization with respect to the size of the original problem. In order to reduce these costs, different representations of the Krylov vectors of the linearizations have been developed.
First, the second order Arnoldi method (SOAR) \cite{SOAR} and the quadratic Arnoldi method (Q-Arnoldi) \cite{QArnoldi} were developed to solve quadratic eigenvalue problems (QEP), introducing a new representation of the Krylov vectors. However, both methods are potentially unstable as a consequence of performing implicitly the orthogonalization. To cure this instability, the two-level orthogonal Arnoldi process (TOAR) \cite{charlaTOAR, toar} for QEP proposed a different compact representation for the Krylov vectors of the linearization and, combining this representation with the linearization and the Arnoldi recurrence relation, resulted in a memory saving and numerically stable method. Extending the ideas of a compact representation of the Krylov vectors and of the two levels of orthogonalization from polynomials of degree 2 (TOAR) to polynomials of any degree, the authors of \cite{Kressner} developed a memory-efficient and stable Arnoldi process for linearizations of matrix polynomials expressed in the Chebyshev basis. In 2015, the compact rational Krylov method (CORK) for nonlinear eigenvalue problems (NLEP) was introduced in \cite{cork}. CORK considers particular NLEPs that can be expressed and linearized in certain ways, which are solved by applying a compact rational Krylov method to such linearizations. A key feature of the CORK method is that it works for many kinds of linearizations involving a Kronecker structure, as the Frobenius companion form or linearizations of matrix polynomials in different bases (as Newton or Chebyshev, among others \cite{Newton}). CORK reduces both the costs of memory and orthogonalization by using a generalization of the compact Arnoldi representation of the Krylov vectors of the linearizations used in TOAR \cite{charlaTOAR,toar}, and gets stability through two levels of orthogonalization as in TOAR.

In this paper, we develop a rational Krylov method that works on the linearization of REPs introduced in \cite{SuBai} to solve large-scale and sparse $n\times n$ REPs. To this aim, we introduce a compact rational Krylov method for REPs (R-CORK). In the spirit of TOAR and CORK, we will work with two levels of orthogonalization, and, as in CORK, we adapt the classical rational Krylov method \cite{ruhe1, ruhe2, cork} on the linearization to a compact representation of the Krylov vectors and to the two levels of orthogonalization. We can perform the shift-and-invert step by solving linear systems of size $n$. To this purpose, the linearization introduced in \cite{SuBai} is preprocessed in a convenient way and, then, an ULP decomposition is used. This decomposition is similar to the one employed in \cite{cork} directly on the linearizations of the NLEPs considered there. Once this step is performed, we start with the two levels of orthogonalization.
The first level involves an orthogonalization process with vectors of size $n$ and in the second level of orthogonalization we work with vectors of size much smaller than $n$, so this level is cheap compared with the first level. As a result, we develop a stable method that allows us to reduce the orthogonalization cost and the memory cost by exploiting the structure of the matrix pencil that linearizes the REP and using the rational Krylov recurrence relation.

The rest of the paper is organized as follows. Section \ref{sec.prelim} introduces some preliminary concepts: a summarized background on polynomial and rational eigenvalue problems, the classical rational Krylov method for the generalized eigenvalue problem, and the CORK method particularized to polynomial eigenvalue problems.
Section \ref{sect.newTOAR} proposes the compact rational Krylov decomposition that we use to develop the R-CORK method and presents the detailed algorithm with the two levels of orthogonalization. Section \ref{sect.implicit} discusses the implementation of implicit restarting for the R-CORK method.
Section \ref{sec.numexper} presents numerical examples which show that the R-CORK method works satisfactorily in practice and, finally, the main conclusions and some lines of future research are discussed in Section \ref{sec.conclu}.

{\bf Notation.} We denote vectors by lowercase characters, $u$, and matrices by capital characters, $A$. Block vectors and block matrices are denoted by bold face fonts, $\mathbf{u}$, and $\mathbf{A}$, respectively, and the $i$-th block of $\mathbf{u}$ is represented by $u^{(i)}$. The conjugate transpose of $A$ is denoted as $A^*$.  The $i \times j$ matrix with the main diagonal entries equal to 1 and the rest of entries equal to zero is represented by $I_{i\times j}$. In the particular case of $i=j$ this matrix is the identity matrix and is denoted by $I_i$. The vector $e_j$ represents the canonical vector associated to the $j$-th column of the identity matrix and $0_{i\times j}$ represents the zero matrix of size $i\times j$, which in the particular case $i=j$ is denoted simply by $0_j$. The matrix $U_j$ represents a matrix with $j$ columns and $u_i$ represents the $i$-th column of $U_j$.
The rational Krylov subspace of order $m$ associated with the matrices $A$ and $B \in \BC^{n\times n}$, the initial vector $u_1\in \BC^n$ and the shifts $\theta_1,\theta_2,\dots,\theta_{m-1} \in \BC$ is denoted by
 \begin{equation}\label{rat.subs}
 \mathcal{K}_m(A,B,u_1,\theta_{1,\dots,m-1})=\mbox{span}\{ u_1, (A-\theta_1 B)^{-1}Bu_1,(A-\theta_2 B)^{-1}Bu_2,\dots,(A-\theta_{m-1}B)^{-1}Bu_{m-1}\}
 \end{equation}
where $u_{i+1}=(A-\theta_{i}B)^{-1}Bu_{i}$, $i=1,\dots,m-1$. We omit subscripts when the
dimensions of the matrices are clear from the context. The norm  $\| \cdot \|_2 $ represents the 2-norm and $\| \cdot \|_F$ the Frobenius norm \cite[Ch. 5]{Horn}. The Kronecker product of two matrices is denoted by $A\otimes B$. The set of $n\times n$ rational matrices is denoted by $\mathbb{C}(\la)^{n\times n}$ and
the set of $n\times n$ polynomial matrices (or, equivalently, matrix polynomials) is denoted by $\mathbb{C}[\la]^{n\times n}$.

\section{Preliminaries} \label{sec.prelim}
\subsection{Basics on polynomial eigenvalue problems and linearizations}
The classical approach to solve a regular PEP
\begin{equation}\label{pep}
P(\lambda)x=0,
\end{equation}
where $P(\lambda)=\sum_{i=0}^d{\lambda^iP_i}$ with $P_i\in\BC^{n\times n}$ and det$(P(\lambda))\not\equiv 0$ is via linearization. In this process, the matrix polynomials are mapped into matrix pencils with the same eigenvalues and multiplicities \cite{Lancaster, mackey}. More precisely, a pencil $L(\lambda)={\mathbf{A}}-\lambda \mathbf{B}$ is called a linearization of $P(\lambda)$ if there exist unimodular matrix polynomials\footnote{Unimodular matrix polynomials are matrix polynomials whose determinant is a nonzero constant, i.e., it does not depend of $\lambda$. Most of the linearizations considered in this work are in fact strong linearizations \cite{PEP2,mackey}, so, they preserve also the eigenvalues at infinity of $P(\lambda)$, and their multiplicities, if they are present. Nevertheless in this work we do not intend to compute infinite eigenvalues since their existence is not generic, and so we do not need to use the concept of strong linearization. } $E_1(\lambda)$, $E_2(\lambda)$ such that
\begin{equation*}
\left[ \begin{array}{cc} P(\lambda) & 0 \\ 0 & I_{(d-1)n} \end{array}\right]=E_1(\lambda)({\mathbf{A}}-\lambda \mathbf{B})E_2(\lambda).
\end{equation*}

Some linearizations of matrix polynomials of degree $d$ and size $n\times n$, very useful in practice, are of the form as the pencils in Definition \ref{strucpencil}.
\begin{definition}\cite[Definition 2.2]{cork}\label{strucpencil}
Let $P(\lambda) \in \BC[\lambda]^{n\times n}$ be a regular matrix polynomial, i.e.,  {\rm det}$(P(\lambda))$ does not vanish identically, of degree $d\ge 2$ and size $n\times n$.
A $dn \times dn$ matrix pencil $L(\lambda)$ of the form
\begin{equation}\label{LEP}
L(\lambda)=\mathbf{A}-\lambda \mathbf{B},
\end{equation}
where
\begin{equation}\label{pencilpep}
\bfA=\left[\dfrac{A_0 \quad A_1 \quad \cdots \quad A_{d-1}}{M\otimes I_n}\right], \quad
\bfB=\left[\dfrac{B_0 \quad B_1 \quad \cdots \quad B_{d-1}}{N\otimes I_n}\right]
\end{equation}
and $A_i$, $B_i\in \BC^{n\times n}$, $i=0,1,\dots,d-1$, and $M$, $N\in\BC^{(d-1)\times d}$, is called a structured linearization pencil of $P(\lambda)$ if the following conditions hold
\begin{enumerate}
\item $L(\lambda)$ is a linearization of $P(\lambda)$,
\item $M-\lambda N$ has rank $d-1$ for all $\lambda\in\BC$, and
\item $(\bfA- \lambda \bfB)(f(\lambda) \otimes I_n)=e_1\otimes P(\lambda)$ for some polynomial function $f:\BC \rightarrow \BC[\lambda]^{d}$, $f(\lambda)\neq 0$ for all $\lambda\in\BC$, where $e_1\in \BC^d$ is the first vector of the canonical basis of $\BC^d$.
\end{enumerate}
\end{definition}
The matrices $A_i$ and $B_i$ that appear in the first block rows in \eqref{pencilpep}  are related to the matrix polynomial $P(\lambda)$ and the matrices $M$ and $N$ correspond to the linear relations between the basis functions $f_i(\lambda)$, where $f(\lambda) := [f_1(\lambda), \ldots, f_d(\lambda)]^T$, used in the representation of the matrix polynomial. The interested reader can find some examples in \cite{cork}. The identity $(\bfA-\lambda \bfB)(f(\lambda)\otimes I_n)=e_1\otimes P(\lambda)$ generalizes the identity used in \cite{mackey} to define certain vector spaces of linearizations of matrix polynomials.

An important property of structured linearization pencils is that their eigenvectors are closely related to the eigenvectors of the matrix polynomial as we can see in Theorem \ref{th.eigenv}.
\begin{theorem}{\cite[Corollary 2.4]{cork}}\label{th.eigenv}
Let $L(\lambda)$ be a structured linearization pencil of $P(\lambda)$ as in Definition \ref{strucpencil} and let $(\lambda_\star,\mathbf{x})$ be an eigenpair of $L(\lambda)$. Then, the eigenvector $\mathbf{x}$ has the following structure
\begin{equation*}
\mathbf{x}=f(\lambda_\star)\otimes x,
\end{equation*}
where $x\in \BC^n$ is an eigenvector of $P(\lambda)$ corresponding to $\lambda_\star$.
\end{theorem}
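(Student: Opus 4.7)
The plan is to exploit the block structure of the pencil by splitting the eigen-equation $(\mathbf{A}-\lambda_\star\mathbf{B})\mathbf{x}=0$ into its top block row (involving the coefficient blocks $A_i,B_i$) and its bottom $(d-1)n$ rows (involving only $(M-\lambda_\star N)\otimes I_n$), and to handle the two pieces separately. First I would partition $\mathbf{x}\in\BC^{dn}$ into $d$ blocks $x^{(1)},\ldots,x^{(d)}\in\BC^n$, so the bottom rows of the eigen-equation read $((M-\lambda_\star N)\otimes I_n)\,\mathbf{x}=0$, while the top row is the usual nonlinear relation $\sum_{i=0}^{d-1}(A_i-\lambda_\star B_i)x^{(i+1)}=0$.

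The key step is to show that the bottom-row kernel condition, combined with Conditions 2 and 3 of Definition \ref{strucpencil}, forces the Kronecker structure $\mathbf{x}=f(\lambda_\star)\otimes x$. To extract $f(\lambda_\star)$ from the hypotheses, I would look at the bottom $(d-1)n$ rows of the identity $(\mathbf{A}-\lambda\mathbf{B})(f(\lambda)\otimes I_n)=e_1\otimes P(\lambda)$: since $e_1\otimes P(\lambda)$ vanishes there, this yields $((M-\lambda N)f(\lambda))\otimes I_n=0$, i.e., $(M-\lambda N)f(\lambda)=0$ for every $\lambda$. By Condition 2, $M-\lambda_\star N$ has rank $d-1$, so its one-dimensional right nullspace is spanned by the (nonzero) vector $f(\lambda_\star)$. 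Rewriting $((M-\lambda_\star N)\otimes I_n)\mathbf{x}=0$ via the identity $(C\otimes I_n)\mathrm{vec}(X)=\mathrm{vec}(XC^T)$ with $X=[x^{(1)}\;\cdots\;x^{(d)}]$, I obtain that every row of $X$ lies in this same one-dimensional nullspace. Consequently $X=x\,f(\lambda_\star)^T$ for some $x\in\BC^n$, i.e.,
\begin{equation*}
\mathbf{x}=f(\lambda_\star)\otimes x.
\end{equation*}

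With the factorization in hand, I would plug it into the full eigen-equation and use Condition 3 at $\lambda=\lambda_\star$:
\begin{equation*}
0=(\mathbf{A}-\lambda_\star\mathbf{B})(f(\lambda_\star)\otimes x)=\bigl[(\mathbf{A}-\lambda_\star\mathbf{B})(f(\lambda_\star)\otimes I_n)\bigr]x=(e_1\otimes P(\lambda_\star))x=e_1\otimes P(\lambda_\star)x,
\end{equation*}
which forces $P(\lambda_\star)x=0$. To close the argument, $x\neq 0$ because $\mathbf{x}=f(\lambda_\star)\otimes x\neq 0$ and $f(\lambda_\star)\neq 0$, so $x$ is a genuine eigenvector of $P(\lambda)$ associated with $\lambda_\star$.

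The main obstacle is keeping the Kronecker-product bookkeeping straight when turning the bottom block-row kernel condition into a rank-one factorization of the matricization $X$ of $\mathbf{x}$: the row-versus-column stacking has to match the convention implicit in $f(\lambda)\otimes I_n$, and the transposes in the vec/Kronecker identity are where a slip is most likely. Everything else is a direct use of the three defining properties of a structured linearization pencil.
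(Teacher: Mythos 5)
Your argument is correct: the bottom block rows of Condition 3 give $(M-\lambda N)f(\lambda)=0$ for all $\lambda$, Condition 2 makes $\ker(M-\lambda_\star N)$ one-dimensional and spanned by the nonzero vector $f(\lambda_\star)$, the identity $((M-\lambda_\star N)\otimes I_n)\mathbf{x}=\mathrm{vec}\bigl(X(M-\lambda_\star N)^T\bigr)$ with $X=[x^{(1)}\ \cdots\ x^{(d)}]$ then forces $X=x\,f(\lambda_\star)^T$, and substituting $\mathbf{x}=(f(\lambda_\star)\otimes I_n)x$ back into the eigen-equation together with Condition 3 yields $P(\lambda_\star)x=0$ with $x\neq 0$. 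Note, however, that the paper does not prove this statement at all: it is imported verbatim from \cite{cork} (Corollary 2.4 there), where it is obtained as a consequence of the block ULP factorization reproduced here as Theorem \ref{ULP}. In that route one writes $L(\lambda_\star)=\cU(\lambda_\star)\cL(\lambda_\star)(\cP\otimes I_n)$, observes that $\cU(\lambda_\star)$ and the block $(M_1-\lambda_\star N_1)\otimes I_n$ are invertible, and reads off the kernel of $L(\lambda_\star)$ from the block-triangular factor $\cL(\lambda_\star)$, whose only singular diagonal block is $P(\lambda_\star)$. Your proof is a genuinely different and more elementary derivation, using only Conditions 2 and 3 of Definition \ref{strucpencil} and a vec/Kronecker nullspace argument; the ULP-based proof has the advantage of reusing machinery that the paper needs anyway for the shift-and-invert step (Lemma \ref{ULP.REP} and Algorithm \ref{solver.system}), while yours is self-contained and makes transparent exactly which of the three defining conditions produce the eigenvector structure. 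Your Kronecker bookkeeping is consistent with the convention $x^{(i)}=f_i(\lambda_\star)x$ implicit in $f(\lambda_\star)\otimes x$, so the step you flagged as the likely source of error is in fact handled correctly.
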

The block ULP decomposition in Theorem \ref{ULP} for structured linearization pencils of matrix polynomials is important for the CORK method introduced in \cite{cork} because it allows to perform the shift-and-invert step in CORK efficiently. We will use also a decomposition of this type to perform the shift-and-invert step in the R-CORK method developed in Section \ref{sect.newTOAR}.
\begin{theorem}{\cite[Theorem 2.3]{cork}}\label{ULP}
Let $\bfA$ and $\bfB$ be defined by \eqref{pencilpep}. Then, for every $\mu\in \BC$ there exists a permutation matrix $\cP \in \BC^{d\times d}$ such that the matrix $(M_1-\mu N_1)\in \BC^{(d-1)\times (d-1)}$ is invertible with
\begin{equation*}
M=:[m_0 \quad M_1]\cP, \quad N=:[n_0\quad N_1]\cP.
\end{equation*}
Moreover, the matrix $L(\mu)$, i.e., the pencil $L(\la)$ in \eqref{LEP} evaluated in $\mu$, can be factorized as follows
\begin{equation*}
L(\mu)=\bfA-\mu \bfB= \mathcal{U}(\mu)\mathcal{L}(\mu)(\cP\otimes I_n),
\end{equation*}
where
\begin{eqnarray*}
\mathcal{L}(\mu) &=& \left[ \begin{array}{cc} P(\mu) & 0 \\ (m_0-\mu n_0)\otimes I_n & (M_1 -\mu N_1)\otimes I_n \end{array} \right], \\
\mathcal{U}(\mu) &=& \left[ \begin{array}{cc} \alpha^{-1}I_n & (\mathbf{\bar{A}}_1-\mu \mathbf{\bar{B}}_1)((M_1-\mu N_1)^{-1}\otimes I_n) \\ 0 & I_{(d-1)n} \end{array} \right],
\end{eqnarray*}
with the scalar $\alpha=e_1^T\cP f({\mu})\neq 0$ and
\begin{eqnarray*}
[A_0 \quad A_1 \quad \cdots \quad A_{d-1}] &=:& [\bar{A}_0 \quad \mathbf{\bar{A}}_1](\cP\otimes I_n), \\
{[}B_{0} \quad B_{1} \quad \cdots \quad  B_{d-1}{]} &=:& {[} \bar{B}_0 \quad \mathbf{\bar{B}}_1 {]}(\cP \otimes I_n).
\end{eqnarray*}
\end{theorem}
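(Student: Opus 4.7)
The plan is to split the argument into two independent pieces: (i) producing the permutation $\cP$ so that $M_1 - \mu N_1$ is invertible, which relies on condition~2 of Definition~\ref{strucpencil}; and (ii) verifying the claimed block factorization of $L(\mu)$, which I would reduce to a single block-$(1,1)$ identity whose proof uses condition~3. The factorization is really a block LU/Schur-complement decomposition after the columns of $[M\ N]$ have been reordered, so once the right permutation is chosen the algebra is routine and the only delicate point is bookkeeping with Kronecker products.

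For the first part, fix $\mu \in \BC$. By condition~2, the matrix $M - \mu N \in \BC^{(d-1)\times d}$ has rank $d-1$, so it contains $d-1$ linearly independent columns. I would take $\cP$ to be any permutation moving those columns to the last $d-1$ positions, which directly yields the partition $M = [m_0\ M_1]\cP$, $N = [n_0\ N_1]\cP$ with $M_1 - \mu N_1$ invertible. By definition, the same $\cP$ produces the partitions $[A_0\ \cdots\ A_{d-1}] = [\bar A_0\ \mathbf{\bar A}_1](\cP \otimes I_n)$ and $[B_0\ \cdots\ B_{d-1}] = [\bar B_0\ \mathbf{\bar B}_1](\cP \otimes I_n)$.

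Next I would right-multiply the claimed identity by $(\cP^T \otimes I_n)$, reducing it to $L(\mu)(\cP^T \otimes I_n) = \mathcal{U}(\mu)\mathcal{L}(\mu)$. The left-hand side is
\[
L(\mu)(\cP^T \otimes I_n) = \begin{bmatrix} \bar A_0 - \mu \bar B_0 & \mathbf{\bar A}_1 - \mu \mathbf{\bar B}_1 \\ (m_0 - \mu n_0) \otimes I_n & (M_1 - \mu N_1) \otimes I_n \end{bmatrix},
\]
and expanding $\mathcal{U}(\mu)\mathcal{L}(\mu)$ block-by-block gives the same bottom block row and the same $(1,2)$ block $\mathbf{\bar A}_1 - \mu \mathbf{\bar B}_1$ (using $(M_1-\mu N_1)^{-1}(M_1-\mu N_1) = I_{d-1}$). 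Hence the only non-trivial identity to verify is
\[
\bar A_0 - \mu \bar B_0 = \alpha^{-1} P(\mu) + (\mathbf{\bar A}_1 - \mu \mathbf{\bar B}_1)\bigl((M_1-\mu N_1)^{-1}(m_0 - \mu n_0) \otimes I_n\bigr).
\]

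This last step is where I would invoke condition~3 of Definition~\ref{strucpencil} and it is the part I expect to be the main obstacle, essentially because of the Kronecker-product juggling. Write $\tilde{f}(\mu) := \cP f(\mu)$ and partition $\tilde{f}(\mu) = [\alpha,\ \tilde{f}_1(\mu)^T]^T$, so that $\alpha = e_1^T \cP f(\mu)$. Evaluating condition~3 at $\lambda = \mu$ and extracting the lower $(d-1)n$ rows gives $(M - \mu N)f(\mu) = 0$, which after applying $\cP$ becomes $(M_1-\mu N_1)\tilde{f}_1(\mu) = -\alpha(m_0 - \mu n_0)$. The top $n$ rows, after inserting $(\cP^T \otimes I_n)(\cP \otimes I_n) = I_{dn}$, give
\[
P(\mu) = \alpha(\bar A_0 - \mu \bar B_0) + (\mathbf{\bar A}_1 - \mu \mathbf{\bar B}_1)(\tilde{f}_1(\mu) \otimes I_n).
\]
Substituting $\tilde{f}_1(\mu) = -\alpha(M_1-\mu N_1)^{-1}(m_0 - \mu n_0)$ and dividing by $\alpha$ delivers the required $(1,1)$-block identity. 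Finally I would confirm $\alpha \neq 0$, which is needed both for the statement and for $\mathcal{U}(\mu)$ to be well defined: if $\alpha = 0$, then $(M_1-\mu N_1)\tilde{f}_1(\mu) = 0$ forces $\tilde{f}_1(\mu) = 0$, hence $f(\mu) = 0$, contradicting the hypothesis $f(\lambda) \neq 0$ for all $\lambda \in \BC$ in condition~3. This closes the proof.
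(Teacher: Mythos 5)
Your proof is correct: choosing $\cP$ via condition~2 of Definition~\ref{strucpencil}, reducing the factorization (after right-multiplication by $\cP^T\otimes I_n$) to the single $(1,1)$-block identity, and then using condition~3 split into its bottom $(d-1)n$ rows (giving $(M_1-\mu N_1)\tilde f_1(\mu)=-\alpha(m_0-\mu n_0)$, hence $\alpha\neq 0$ by invertibility of $M_1-\mu N_1$ and $f(\mu)\neq 0$) and its top $n$ rows (giving $P(\mu)=\alpha(\bar A_0-\mu\bar B_0)+(\mathbf{\bar{A}}_1-\mu\mathbf{\bar{B}}_1)(\tilde f_1(\mu)\otimes I_n)$) all check out, with the only cosmetic remark that $\alpha\neq 0$ should be recorded before dividing by it. Note that this paper does not prove Theorem~\ref{ULP} at all—it imports it from \cite[Theorem 2.3]{cork}—and your argument is essentially the one given there (the same computation the paper reuses when proving Lemma~\ref{ULP.REP}), so it matches the intended route.
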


\subsection{A linearization for rational eigenvalue problems}
In this subsection, we present some results and notations related to the REP. Interested readers can find more information in the summaries presented in \cite[Sections 1 \& 2]{behera} and \cite[Section 2]{ADMZ2016}, as well as in the classical references \cite{kailath,rosenbrock-book}.

In this work, we assume that the rational matrix $R(\lambda)$ in \eqref{formrat} is regular, this means, $\mbox{det}(R(\lambda)) \not\equiv 0$. With a slight lack of rigor, we can say that if the matrices $E_i$ in \eqref{formrat} are linearly independent, then the roots of the denominators $g_i(\lambda)$ are the poles of $R(\lambda)$ and that $R(\lambda)$ is not defined in these poles. A scalar $\lambda \in \mathbb{C}$ which is not a pole is called an eigenvalue of $R(\lambda)$ if det$(R(\lambda))=0$, and a nonzero vector $x \in \mathbb{C}^n$ is called an eigenvector of $R(\lambda)$ associated to the eigenvalue $\lambda$ if the condition \eqref{rateig} holds. The pair $(\lambda,x)$ constitutes an eigenpair of $R(\lambda)$ and our goal is to compute a subset of such eigenpairs.

We express the matrix polynomial $P(\lambda)$ of degree $d$ in \eqref{formrat} as follows
\begin{equation}\label{matrixpol}
P(\lambda)=\lambda^dP_d+\lambda^{d-1}P_{d-1}+\cdots+\lambda P_1+P_0,
\end{equation}
where $P_i \in \BC^{n \times n}$ for $i=0,\dots,d$. From now on, we assume the generic condition that the leading coefficient matrix $P_d$ is nonsingular in \eqref{matrixpol}. As explained in the introduction, we assume that $f_i(\lambda)$ and $g_i(\lambda)$ in \eqref{formrat} are coprime, this is, they do not have common factors, and that the rational functions $\frac{f_i(\lambda)}{g_i(\lambda)}$ are strictly proper, this is, the degree, $m_i$, of $f_i(\lambda)$ is smaller than the degree, $n_i$, of $g_i(\lambda)$.
Under these assumptions, in \cite{SuBai}, Su and Bai proposed a linearization to solve the rational eigenvalue problem. With this aim, they first showed that one can find matrices $E,F$ of size $n\times s$, and matrices $C,D$ of size $s\times s$, with $s=r_1n_1+r_2n_2+\cdots +r_kn_k$ with $r_i=$rank$(E_i)$ in \eqref{formrat}, such that
\begin{equation}\label{newreprat}
R(\lambda)=P(\lambda)-E(C-\lambda D)^{-1}F^T.
\end{equation}
In fact, it is a classical result (much older than \cite{SuBai}) that any rational matrix can be written as in \eqref{newreprat} by expressing $R(\la)$ as the sum of its unique polynomial and strictly proper parts, and, then, constructing a state-space realization of the strictly proper part \cite{rosenbrock-book} (see also \cite[Section 2]{ADMZ2016}). However, we emphasize that, as far as we know, \cite{SuBai} is the first reference available in the literature that uses \eqref{newreprat} with the purpose of computing the eigenvalues of a REP, as well as that \cite{SuBai} is the first reference that points out that the representation \eqref{newreprat} is immediately available from the data in many practical REPs without any computational cost.

Once the representation \eqref{newreprat} for the REP is available, the authors of \cite{SuBai} linearized the REP  $R(\lambda)x=0$ as follows:
\begin{equation}\label{linearrat}
(\bfAA-\lambda\bfBB)\mathbf{z}=0,
\end{equation}
where
\begin{equation}\label{matAB}
\mathcal{A}=\left[ \begin{array}{cccc|c}
P_{d-1} & P_{d-2} & \cdots & P_0 & E \\
-I_n & 0 & \cdots & 0 & \\
& \ddots & \ddots & \vdots & \\
& & -I_n & 0 & \\ \hline
& & & F^T & C \end{array}\right], \quad
\mathcal{B}=-\left[ \begin{array}{cccc|c}
P_d & & & & \\
& I_n & & & \\
& & \ddots & & \\
& & & I_n & \\ \hline
& & & & -D \end{array} \right]
\end{equation}
and
\begin{equation}\label{eigenv}
\mathbf{z}=\left[ \begin{array}{c}
\lambda^{d-1}x \\
\lambda^{d-2}x \\
\vdots \\
x \\ \hline
y \end{array}\right], \qquad \mbox{with $y= - (C-\la D)^{-1} \, F^T x$.}
\end{equation}
Denoting by $\mathbf{A}$ and $\mathbf{B}$ the upper left $nd\times nd$ submatrices of $\AA$ and $\BB$, we can write $\cA- \lambda \cB$  as follows
\begin{equation}\label{ABreduc}
\cA- \lambda \cB=\left[\begin{array}{c|c} \mathbf{A}-\lambda \mathbf{B} & e_1\otimes E \\ \hline e_d^T\otimes F^T & C-\lambda D\end{array}\right],
\end{equation}
where $e_1$ and $e_d$ are the first and the last columns of $I_d$, respectively. Observe that $\mathbf{A} - \lambda \mathbf{B}$ is the famous first Frobenius companion linearization of the matrix polynomial $P(\la)$ in \eqref{matrixpol} \cite{Lancaster}.

As mentioned above, it is important to remark that in many applications of REPs \cite{visco,SuBai,quantum1}, the first step in the process above, i.e., to construct the representation \eqref{newreprat}, does not involve any computational effort, since the matrices $P_0 , P_1, \ldots, P_d, E, C, D,$ and $F$ can be obtained directly from the data. As a consequence the linearization $\cA - \la \cB$ above can be constructed also without any computational effort and all the computational effort is attached to the solution of the GEP \eqref{linearrat}.  Another important remark that has a deep computational impact is that in many applications of REPs \cite{visco,SuBai,quantum1} the size $s\times s$ of the matrices $C$ and $D$ is much smaller than the size $n\times n$ of the original REP, i.e., $s \ll n$ or, in plain words, the rank of the strictly proper part of $R(\la)$ is much smaller than the size of $R(\la)$. Therefore, if $s \ll n$, then the size $(nd+s) \times (nd+s)$ of the linearization $\AA - \la \BB$ is approximately equal to the size  $(nd) \times (nd)$ of the linearization $\mathbf{A} - \lambda \mathbf{B}$ of the matrix polynomial $P(\la)$, and the costs of solving the GEPs $\AA - \la \BB$ and $\mathbf{A} - \lambda \mathbf{B}$ are expected to be similar. Thus, it is not surprising that the R-CORK algorithm developed in Section \ref{sect.newTOAR} for large-scale REPs is particularly efficient in terms of storage and orthogonalization costs when $s \ll n$. However, we emphasize in this context that R-CORK also improves significantly these costs when $s \approx n$ (and $d \geq 2$) with respect to a direct application of large-scale eigensolvers to $\AA - \la \BB$. We will make often comments about the advantages of considering $s\ll n$ throughout the paper.

A formal definition of linearization of a rational matrix can be found in \cite{behera} and another one which includes the concept of strong linearization in \cite{ADMZ2016}. In fact, it is proved in \cite{ADMZ2016} that $\cA - \la \cB$ in \eqref{linearrat} is a strong linearization of $R(\la)$ in \eqref{newreprat} whenever $-E(C-\la D)^{-1} F^T$ is a {\em minimal} state-space realization \cite{rosenbrock-book} of the strictly proper part of $R(\la)$ \cite[Section 8]{ADMZ2016}. We emphasize that the requirement that $-E(C-\la D)^{-1} F^T$ is a minimal state-space realization is very mild \cite[Section 8]{ADMZ2016} and that is fully necessary to guarantee that for every eigenvalue $\la$ of $R(\la)$ the matrix $C-\la D$ is nonsingular \cite[Example 3.2]{ADMZ2016}. In the rest of the paper we implicitly assume that $E(C-\la D)^{-1}F^T$ is a minimal realization, although the only result we will use explicitly is Theorem \ref{th.rateig}, which remains valid even when $E(C-\la D)^{-1} F^T$ is not minimal. The subtle point is that Theorem \ref{th.rateig} assumes that $\la$ is a number such that the matrix $(C-\lambda D)$ is invertible, but if $E(C-\la D)^{-1}F^T$ is not minimal then there may be eigenvalues of $R(\la)$ that do not satisfy such assumption.

\begin{theorem}{\cite[Theorem 3.1]{SuBai}}\label{th.rateig}
Let $\lambda \in \BC$ be such that {\rm det}$(C-\lambda D)\neq 0$. Then the following statements hold:
\begin{enumerate}
\item[(a)] If $\lambda$ is an eigenvalue of the REP \eqref{newreprat}, then it is an eigenvalue of the GEP \eqref{linearrat}.
\item[(b)] Let $\lambda$ be an eigenvalue of the GEP \eqref{linearrat} and $z=[z_1^T, z_2^T,\cdots,z_d^T,y^T]^T$ be a corresponding eigenvector, where $z_i$ are vectors of length $n$ for $i=1,2,\dots, d$, and $y$ is a vector of length $s$. Then $z_d \neq 0$ and $R(\lambda)z_d=0$, namely, $\lambda$ is an eigenvalue of the REP \eqref{newreprat} and $z_d$ is a corresponding eigenvector. Moreover, the algebraic and geometric multiplicities of $\lambda$ for the REP \eqref{newreprat} and GEP \eqref{linearrat} are the same.
\end{enumerate}
\end{theorem}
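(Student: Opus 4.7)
My plan is to verify parts (a) and (b) by working block-row by block-row through the linearization pencil in \eqref{ABreduc}, and to establish the multiplicity claims via an explicit bijection of eigenvectors together with a Schur-complement identity for $\det(\cA - \lambda \cB)$. For part (a), given an eigenpair $(\lambda,x)$ of $R(\lambda)$, I would substitute the structured vector $\mathbf{z}$ from \eqref{eigenv} into $(\cA - \lambda \cB)\mathbf{z}$ and check that every block component vanishes. The $k$-th block row for $k=2,\ldots,d$ gives $-\lambda^{d-k+1}x + \lambda\cdot\lambda^{d-k}x = 0$ automatically. The last block row contributes $F^T x + (C-\lambda D)y$, which is zero by the very definition $y = -(C-\lambda D)^{-1}F^T x$ built into $\mathbf{z}$. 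The first block row collapses to $\sum_{i=0}^{d}\lambda^i P_i x + Ey = P(\lambda) x - E(C-\lambda D)^{-1}F^T x = R(\lambda) x$, which vanishes by assumption.

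For part (b), I would run the same argument in reverse. From an eigenpair $(\lambda,z)$ of $\cA - \lambda \cB$, the middle block rows yield the recurrence $z_{k-1} = \lambda z_k$ for $k=2,\ldots,d$, so inductively $z_{d-j} = \lambda^j z_d$ for $j=0,\ldots,d-1$. The last block row, combined with the hypothesis $\det(C-\lambda D)\neq 0$, forces $y = -(C-\lambda D)^{-1}F^T z_d$. Plugging these two relations into the first block row collapses it to $R(\lambda)z_d = 0$. If $z_d$ were zero the recurrence would cascade to make every $z_i$ vanish, which would then force $y = 0$ as well, contradicting $z\neq 0$; so $z_d\neq 0$.

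For the multiplicity assertions, the map $x\mapsto \mathbf{z}(x)$ defined in \eqref{eigenv} and the inverse map $z\mapsto z_d$ from part (b) are mutually inverse linear bijections between $\ker R(\lambda)$ and $\ker(\cA-\lambda\cB)$, matching the geometric multiplicities. For the algebraic part, taking a Schur complement along the last block row of \eqref{ABreduc} gives, for $\mu$ in a neighbourhood of $\lambda$,
\begin{equation*}
\det(\cA - \mu \cB) = \det(C-\mu D)\,\det\!\bigl((\mathbf{A}-\mu\mathbf{B}) - e_1 e_d^T\otimes E(C-\mu D)^{-1}F^T\bigr).
\end{equation*}
The second factor is the determinant of the first Frobenius companion pencil of $P(\mu)$ with its $(1,d)$ block shifted by $-E(C-\mu D)^{-1}F^T$, and the same unimodular block-row reduction that turns the Frobenius pencil into $\mathrm{diag}(P(\mu),I_n,\ldots,I_n)$ now produces $\mathrm{diag}(R(\mu),I_n,\ldots,I_n)$, so this factor equals $\pm\det R(\mu)$. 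Since $\det(C-\mu D)$ is nonzero at $\mu=\lambda$, it is a local analytic unit there, so the orders of vanishing of $\det(\cA - \mu\cB)$ and $\det R(\mu)$ at $\lambda$ agree. The main subtlety I anticipate is carrying the usual polynomial block-row reduction through even though $R(\mu)$ is only rational; this is legitimate because $E(C-\mu D)^{-1}F^T$ is analytic in a neighbourhood of $\lambda$, so the reduction can be performed entirely over the local ring of analytic functions at $\lambda$ and the resulting determinant identity extends to $\mu = \lambda$ by continuity.
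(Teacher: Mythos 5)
Note first that this paper does not prove Theorem \ref{th.rateig} at all: it is imported verbatim from \cite[Theorem 3.1]{SuBai}, so there is no internal proof to compare against, and your proposal must be judged on its own. It is correct and complete. The block-row verification with the pencil written as in \eqref{ABreduc} gives (a) directly; run backwards it gives $z_{i}=\lambda^{d-i}z_d$, $y=-(C-\lambda D)^{-1}F^Tz_d$ (here the hypothesis $\det(C-\lambda D)\neq 0$ is used) and $R(\lambda)z_d=0$, and the cascade argument correctly yields $z_d\neq 0$; these relations also show that $x\mapsto \mathbf{z}(x)$ from \eqref{eigenv} and $z\mapsto z_d$ are mutually inverse linear bijections between the two eigenspaces, so the geometric multiplicities agree. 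For the algebraic multiplicities, your Schur complement along the last block row, together with the companion-type reduction of the modified Frobenius pencil, gives $\det(\cA-\mu\cB)=\pm\det(C-\mu D)\det R(\mu)$ for $\mu$ in a neighbourhood of $\lambda$, and since $\det(C-\mu D)$ is a local analytic unit at $\lambda$ the vanishing orders of $\det(\cA-\mu\cB)$ and $\det R(\mu)$ at $\lambda$ coincide; your observation that the reduction is legitimate over the local ring of functions analytic at $\lambda$ (because $E(C-\mu D)^{-1}F^T$ has no pole there) is exactly the right way to handle the only delicate step. One small point worth making explicit: identifying the algebraic multiplicity of $\lambda$ for the REP with the order of $\lambda$ as a zero of $\det R(\mu)$ is valid precisely because $\lambda$ is not a pole of $R$ (no Smith--McMillan denominator vanishes there), which is guaranteed by the standing assumption $\det(C-\lambda D)\neq 0$; with that remark added, your argument is essentially the standard one and is in the same spirit as the proof given in \cite{SuBai}.
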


Part (b) of Theorem \ref{th.rateig} is the key result that allows us to get the eigenvalues and eigenvectors of $R(\la)$ from those of the GEP $\cA - \la \cB$ in \eqref{linearrat}. In fact, observe that Theorem \ref{th.rateig}-(b) can be improved if $\la \ne 0$, since in this case, according to \eqref{eigenv}, every $z_i$ is an eigenvector of $R(\la)$. Therefore, we have $d$ degrees of freedom for the recovery of the eigenvector of $R(\la)$. The most sensible option from the point of view of rounding errors is to choose the $z_i$ with largest 2-norm, that is, $z_1$ when $|\la| > 1$ and $z_d$ when $|\la| \leq 1$.

The final comment of this section is that in contrast to the CORK method developed in \cite{cork} for PEPs, which is valid for many linearizations, the rational CORK method, R-CORK, introduced in this manuscript uses only the linearization $\cA - \la \cB$ in \eqref{linearrat}. The reason of this restriction is that the theory of linearizations of REPs is far less developed than the theory of linearizations of PEPs. Thus, although many linearizations of REPs have been introduced very recently in \cite{behera,ADMZ2016}, their properties are not yet fully understood.


\subsection{The classical rational Krylov method for generalized eigenvalue problems}
We revise in this subsection the rational Krylov method for GEPs since the algorithm R-CORK presented in this paper is based on this method.
The rational Krylov method \cite{ruhe1, ruhe2} is a generalization for computing eigenvalues of matrices and of matrix pencils of the shift-and-invert Arnoldi method. The main differences between these methods are basically two: in rational Krylov methods we can change the shift $\theta_j$ at each iteration instead of fixing the shift as in the shift-and-invert Arnoldi method. Also, the information of the approximate eigenvalues is contained in two upper Hessenberg matrices $\underbar{$H$}_j$ and $\underbar{$K$}_j$ instead of in only one matrix. In Algorithm \ref{rat.krylov} we present a basic pseudocode of the rational Krylov method that summarizes its main steps and guides the developments in the rest of this subsection, which are a very brief sketch of the rational Krylov method. The reader can find more details in \cite{ruhe1, ruhe2, cork}.
\begin{algorithm}[h]
\caption{Rational Krylov method}\label{rat.krylov}
\begin{algorithmic}
\REQUIRE $\AA$ and $\BB$ square matrices and an initial vector $\mathbf{u}_1$ with $\|\mathbf{u}_1\|_2=1$.
\ENSURE The matrix $\UU_{m+1}$ whose columns are an orthonormal basis of $\mathcal{K}_{m+1}(\AA,\BB,\mathbf{u}_1,\theta_{1,\dots,m})$, and the Ritz pairs $(\lambda,\mathbf{x})$ of $\AA -\lambda \BB$, corresponding to the rational Krylov subspace $\cK_{m}(\AA, \BB, \mathbf{u}_1, \theta_{1,\dots,m-1})$.\\
Initialize $\mathbf{U}_1 = [\mathbf{u}_1]$.
\FOR{$j=1,2,\ldots,m$}
\STATE 1. Choose the shift $\theta_j$.
\STATE 2. $\mathbf{\hat{u}}=(\bfAA-\theta_j \bfBB)^{-1}\bfBB \mathbf{u}_j$.
\STATE 3. $h_j=\mathbf{U}_j^*\mathbf{\hat{u}}$.
\STATE 4. $\mathbf{\tilde{u}}=\mathbf{\hat{u}}-\mathbf{U}_jh_j$.
\STATE 5. Compute the new vector $\mathbf{u}_{j+1}=\mathbf{\tilde{u}}/ h_{j+1,j}$ with $h_{j+1,j}=\|\mathbf{\tilde{u}}\|_2$.
\STATE 6. Update $\mathbf{U}_{j+1}=[ \mathbf{U}_j \quad \mathbf{u}_{j+1}]$.
\STATE 7. Compute the eigenpairs $(\lambda_i,t_i)$ of \eqref{solveHK} and test for convergence.
\ENDFOR
\STATE \quad 8.  Compute the eigenvectors $\mathbf{x}_i=\mathbf{U}_{j+1}\underline{H}_jt_i$, $i=1,\dots,j$.
\end{algorithmic}
\end{algorithm}
This method produces an orthonormal basis for the subspace $\mathcal{K}_{m+1}(\AA,\BB,\mathbf{u}_1,\theta_{1,\dots,m})$.  By using the equalities for $\mathbf{\hat{u}}$ and $\mathbf{\tilde{u}}$ from steps $2$ and $4$ in Algorithm \ref{rat.krylov} we obtain at the $i$-th iteration:
\begin{equation*}
(\bfAA -\theta_i \bfBB)^{-1}\bfBB\mathbf{u}_i=\mathbf{U}_{i+1}\underbar{$h$}_i,
\end{equation*}
with $\underbar{$h$}_i=[h_i^* \quad h_{i+1,i}]^*$. After $j$ steps of the rational Krylov method, we obtain the classic rational Krylov recurrence relation \cite{ruhe2}:
\begin{equation}\label{rec.relation}
\AA \UU_{j+1}\underbar{$H$}_j=\BB \UU_{j+1}\underbar{$K$}_j,
\end{equation}
where $\underbar{$H$}_j,\underbar{$K$}_j \in \BC^{(j+1)\times j}$ are upper Hessenberg matrices and
\begin{equation}\label{defK}
\underbar{$K$}_j=\underbar{$H$}_j\mbox{diag}(\theta_1,\theta_2,\dots,\theta_j)+I_{(j+1)\times j}.
\end{equation}
For simplicity, we assume that breakdowns do not occur in the rational Krylov method, this is, $h_{i+1,i} \neq 0$ for all $i=1,\dots, m$, and in this case the upper Hessenberg matrix $\underline{H}_j$ is unreduced. We can approximate in each iteration of Algorithm \ref{rat.krylov} the corresponding $j$ eigenvalues and eigenvectors of the pencil $\AA-\lambda \BB$ by solving the small generalized eigenvalue problem:
\begin{equation}\label{solveHK}
K_jt_i=\lambda_i H_j t_i, \quad t_i \neq 0,
\end{equation}
where $H_j$ and $K_j$ are the $j \times j$ upper Hessenberg matrices obtained by removing the last rows of $\underbar{$H$}_j$ and $\underbar{$K$}_j$, respectively.
Then, we call $(\lambda_i,\mathbf{x}_i=\UU_{j+1}\underbar{$H$}_jt_i)$ a Ritz pair of $(\cA,\cB)$. We emphasize that the approximate eigenvectors $\mathbf{x}_i$ are not computed in each iteration, since this would be very expensive, and that the test for convergence in step 7 of Algorithm \ref{rat.krylov} can be performed in an inexpensive way by using only the small vectors $t_i$, as it is done in most Krylov methods.


\subsection{The CORK method for polynomial eigenvalue problems}
Van Beeumen, Meerbergen, and Michiels in \cite{cork} proposed a method based on a compact rational Krylov decomposition, extending the two levels of orthogonalization idea of TOAR from the quadratic eigenvalue problem \cite{charlaTOAR, toar} to arbitrary degree polynomial eigenvalue problems and to other NLEPs, including many other linearizations apart from the Frobenius one used in \cite{charlaTOAR,toar}, and using the rational Krylov method instead of the Arnoldi method. This method was baptized as CORK in \cite{cork} and for simplicity we described it particularized to PEPs of degree $d$. The key idea in \cite{cork} is to apply the rational Krylov method in Algorithm \ref{rat.krylov} to a structured linearization pencil of a matrix polynomial $P(\lambda)$ of degree $d$ (recall Definition \ref{strucpencil}) taking into account that the special structure of these pencils imposes a special structure on the bases of the corresponding rational Krylov subspaces. By using this structure, the authors of \cite{cork} reduced both the memory cost and the orthogonalization cost of the classical rational Krylov method applied to an arbitrary pencil of the same size. Considering the matrices $\bfA$ and $\bfB$ in \eqref{pencilpep} and the rational Krylov recurrence relation \eqref{rec.relation} for $\bfA$ and $\bfB$, the authors of \cite{cork} partitioned conformably the matrix $\UU_{j+1}$ as follows
\begin{equation*}
\UU_{j+1}=[\UU_j \quad \mathbf{u}_{j+1}]=\left[ \begin{array}{cc} U_{j}^{(1)} & u_{j+1}^{(1)} \\ U_{j}^{(2)} & u_{j+1}^{(2)} \\ \vdots & \vdots \\ U_{j}^{(d)} & u_{j+1}^{(d)} \end{array}\right],
\end{equation*}
and then, they constructed a matrix $Q_j\in \BC^{n\times r_j}$ with orthonormal columns such that
\begin{equation}\label{defQj}
\mbox{span}\{Q_j\}=\mbox{span}\{U_j^{(1)},U_j^{(2)},\dots,U_j^{(d)}\}
\end{equation}
and rank$(Q_j)=r_j$.
By using the matrix $Q_j$, the blocks $U_j^{(i)}$ for $i=1,2,\dots, d$ can be represented as follows
\begin{equation*}
U_j^{(i)}=Q_jR_j^{(i)}, \quad i=1,2,\dots,d,
\end{equation*}
for some matrices $R_j^{(i)}\in \BC^{r_j\times j}$. Then,
\begin{equation}\label{blockU}
\UU_j=\left[ \begin{array}{c} Q_jR_j^{(1)} \\ Q_jR_j^{(2)} \\ \vdots \\ Q_jR_j^{(d)} \end{array} \right]=\left[ \begin{array}{cccc} Q_j & & & \\ & Q_j & & \\ & & \ddots & \\ & & & Q_j\end{array}\right]\left[ \begin{array}{c} R_j^{(1)} \\ R_j^{(2)} \\ \vdots \\ R_j^{(d)} \end{array} \right] = (I_d\otimes Q_j)\mathbf{R}_j,
\end{equation}
where
\begin{equation*}
\mathbf{R}_j:=\left[ \begin{array}{c} R_j^{(1)} \\ R_j^{(2)} \\ \vdots \\ R_j^{(d)} \end{array} \right].
\end{equation*}
By using this representation, the rational Krylov recurrence relation \eqref{rec.relation} can be written as follows \cite[eq. (4.3)]{cork}
\begin{equation*}
\mathbf{A} (I_d \otimes Q_{j+1}) \mathbf{R}_{j+1} \underbar{$H$}_j = \mathbf{B} (I_d \otimes Q_{j+1}) \mathbf{R}_{j+1} \underbar{$K$}_j.
\end{equation*}
Observe that $\mathbf{U}_j$ has $ndj$ entries while the representation in \eqref{blockU} involves $(n+jd)r_j$ parameters. Therefore, taking into account that in the solution of large-scale PEPs the dimension $j$ of the rational Krylov subspaces is much smaller than the dimension $n$ of the problem and that the degree $d$ of applied PEPs is a low number (for sure smaller than $30$, see \cite{Kressner}, and often much smaller than $30$ \cite{NLEP-collectio}), we get that $jd\ll n$ and that the representation \eqref{blockU} of $\mathbf{U}_j$ stores approximately $n r_j$ numbers. The fundamental reason why the representation of $\mathbf{U}_j$ in \eqref{blockU} is of interest and is indeed compact is because $r_j$ is considerably much smaller than $jd$ for the matrices $\mathbf{A}$ and $\mathbf{B}$ in \eqref{pencilpep}. More precisely, the following result is proved in \cite{cork}.
\begin{theorem}{\cite[Theorems 4.4 and 4.5]{cork}}\label{Qjcork}
Let $Q_j$ be defined as in \eqref{defQj}. Then
\begin{equation}\label{thQj1}
\mbox{span}\{Q_{j+1}\}=\mbox{span}\{Q_j,u_{j+1}^{(p)}\},
\end{equation}
where $u_{j+1}^{(p)}$ represents the block of the vector $\mathbf{u}_{j+1}$ in a certain p-th position determined in \cite{cork}. Also,
\begin{equation}\label{theorj}
r_j<j+d.
\end{equation}
\end{theorem}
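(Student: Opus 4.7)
The plan is to trace the block structure through a single step of the rational Krylov iteration, using the ULP factorization of Theorem~\ref{ULP} to pinpoint which of the $d$ blocks of $\mathbf{\hat{u}} = (\mathbf{A} - \theta_j \mathbf{B})^{-1} \mathbf{B} \mathbf{u}_j$ carries genuinely new information. Writing
\begin{equation*}
\mathbf{A} - \theta_j \mathbf{B} = \mathcal{U}(\theta_j) \, \mathcal{L}(\theta_j) \, (\mathcal{P} \otimes I_n),
\end{equation*}
the inversion reduces to a block lower-triangular solve with $P(\theta_j)$ in the $(1,1)$-block and with the invertible $(M_1 - \theta_j N_1) \otimes I_n$ in the $(2,2)$-block, followed by a correction from $\mathcal{U}(\theta_j)^{-1}$ that only touches the top block, and finally an inverse permutation of the $d$ blocks of size $n$. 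The distinguished index $p$ in the statement is precisely the position into which $\mathcal{P}^T$ sends the top block, i.e., the unique block on which $P(\theta_j)^{-1}$ has acted.

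Reading the last $(d-1)n$ equations of $(\mathbf{A} - \theta_j \mathbf{B}) \mathbf{\hat{u}} = \mathbf{B} \mathbf{u}_j$ after permutation by $\mathcal{P}$, the trailing coefficient is $(M_1 - \theta_j N_1) \otimes I_n$, which acts blockwise as a scalar matrix, while the corresponding part of the right-hand side is $(N \otimes I_n) \mathbf{u}_j$, a blockwise linear combination of $u_j^{(1)}, \ldots, u_j^{(d)}$. Hence each of the $d-1$ non-distinguished blocks $\hat{u}^{(i)}$ is a linear combination of $\hat{u}^{(p)}$ and of the blocks of $\mathbf{u}_j$, all of which lie in $\mbox{span}\{Q_j\}$ by definition of $Q_j$. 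The orthogonalization step $\mathbf{u}_{j+1} = (\mathbf{\hat{u}} - \mathbf{U}_j h_j)/h_{j+1,j}$ preserves this property blockwise, since the blocks of $\mathbf{U}_j h_j$ are linear combinations of the columns of the $U_j^{(i)}$ and therefore lie in $\mbox{span}\{Q_j\}$. This yields $u_{j+1}^{(i)} \in \mbox{span}\{Q_j, u_{j+1}^{(p)}\}$ for every $i$, so
\begin{equation*}
\mbox{span}\{Q_{j+1}\} = \mbox{span}\{U_j^{(1)}, \ldots, U_j^{(d)}, u_{j+1}^{(1)}, \ldots, u_{j+1}^{(d)}\} = \mbox{span}\{Q_j, u_{j+1}^{(p)}\},
\end{equation*}
which is \eqref{thQj1}.

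The bound \eqref{theorj} then follows by induction on $j$. For the base case, $Q_1$ spans $\mbox{span}\{u_1^{(1)}, \ldots, u_1^{(d)}\}$, which contains at most $d$ vectors, so $r_1 \leq d$. The inductive step is immediate from \eqref{thQj1}: adjoining a single vector to a spanning set can raise the dimension by at most one, so $r_{j+1} \leq r_j + 1$. Combining these estimates gives $r_j \leq r_1 + (j-1) \leq d + j - 1 < j + d$, as required.

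The main obstacle is really just the first two steps: isolating the block $\hat{u}^{(p)}$ that is the unique new direction and verifying that every other block of $\mathbf{\hat{u}}$ depends only on $\hat{u}^{(p)}$ together with the blocks of $\mathbf{u}_j$. Once this is in place via Theorem~\ref{ULP}, the identity \eqref{thQj1} and the bound \eqref{theorj} both follow by direct linear algebra and a one-line induction.
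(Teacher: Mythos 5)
Your argument is correct and follows essentially the same route as the source of this cited result and as the paper's own proofs of the rational analogues (Lemma~\ref{lemma.system} and Theorems~\ref{th.Q} and \ref{th.rjRCORK}): the ULP factorization of Theorem~\ref{ULP} isolates the single block on which $P(\theta_j)^{-1}$ acts, the invertibility of $(M_1-\theta_j N_1)\otimes I_n$ shows every other block of $\mathbf{\hat{u}}$ lies in $\mbox{span}\{Q_j,\hat{u}^{(p)}\}$, and the Gram--Schmidt step plus a one-line induction give \eqref{thQj1} and \eqref{theorj}. The only implicit step worth making explicit is the identity $\mbox{span}\{Q_j,\hat{u}^{(p)}\}=\mbox{span}\{Q_j,u_{j+1}^{(p)}\}$, which uses $h_{j+1,j}\neq 0$, i.e., the standing no-breakdown assumption.
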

Note that Theorem \ref{Qjcork} shows that $Q_j$ can be expanded to $Q_{j+1}$ by orthogonalizing only one vector of size $n$ at each iteration.
Also, $\mathbf{R}_{j+1}$  can be expanded in an easy way, if $u_{j+1}^{(p)} \notin$ span$\{Q_j\}$ then the blocks $R_{j+1}^{(i)}$, $i=1,\dots,d,$ can be written as
\begin{equation*}
R_{j+1}^{(i)}=\left[ \left.\begin{array}{c} R_j^{(i)} \\ 0_{1\times j}\end{array}\right| r_{j+1}^{(i)}\right],\quad i=1,\dots,d,
\end{equation*}
and, if $u_{j+1}^{(p)} \in$ span$\{Q_j\}$, then $R_{j+1}^{(i)}=\left[R_j^{(i)} \quad r_{j+1}^{(i)} \right]$, $i=1,\dots, d$. Based on these ideas, the authors of \cite{cork} developed CORK, splitting the method into two levels of orthogonalization: the first level is to expand $Q_j$ into $Q_{j+1}$ and the second level is to expand $\mathbf{R}_j$ into $\mathbf{R}_{j+1}$.
We can see a basic pseudocode for the CORK method in Algorithm \ref{CORK}, whose complete explanation can be found in \cite{cork}. For simplicity, we assume that breakdown does not occur in Algorithm \ref{CORK}, i.e., $h_{j+1,j} \ne 0$ for all $j$.

\begin{algorithm}[H]
\caption{Compact rational Krylov method (CORK)}\label{CORK}
\begin{algorithmic}
\REQUIRE $Q_1\in \C^{n\times r_1}$ and $\bfR_1\in \C^{dr_1\times 1}$ with $Q_{1}^*Q_1=I_{r_1}$ and $\bfR_1^*\bfR_1=1$, where $r_1\leq d$.
\ENSURE Approximate eigenpairs $(\lambda,\mathbf{x})$ associated to $\mathbf{A}-\lambda \mathbf{B}$, with $\mathbf{A}$, $\mathbf{B}$ as in \eqref{pencilpep}.
\FOR {$j=1,2,\dots$}
\STATE 1. Choose shift $\theta_j$.
\STATE First level of orthogonalization:
\STATE 2. Compute $\hat{u}^{(p)}$ by using the ULP decomposition in Theorem \ref{ULP} with $\mu = \theta_j$ (see \cite{cork} for details).
\STATE 3. Orthogonalize: $\tilde{q}=\hat{u}^{(p)}-Q_jQ_j^*\hat{u}^{(p)}$.
\STATE 4. If $\tilde{q} \ne 0$ then compute next vector: $q_{j+1}=\tilde{q} / \|\tilde{q}\|_2$ and $Q_{j+1}=[Q_j \quad q_{j+1}]$. Otherwise $Q_{j+1} = Q_j$.
\STATE Second level of orthogonalization:
\STATE 5. If $r_{j+1} > r_j$ then update matrices: $R_j^{(i)}=\left[ \begin{array}{c} R_j^{(i)} \\ 0_{1 \times j} \end{array}\right]$ for $i=1,\dots ,d$.
\STATE 6. Compute: $\hat{\mathbf r}$ by using the ULP decomposition in Theorem \ref{ULP} (see \cite{cork} for details).
\STATE 7. Compute: $\tilde{\mathbf{r}}=\hat{\mathbf{r}}-\bfR_jh_j$, where $h_j=\bfR_j^*\hat{\mathbf{r}}$.
\STATE 8. Next vector: $\mathbf{r}_{j+1}=\tilde{\mathbf r} / h_{j+1,j}$, where $h_{j+1,j}=\|\tilde{r}\|_2$ and $\mathbf{R}_{j+1}=[\mathbf{R}_j \quad \mathbf{r}_{j+1}]$.
\STATE 9. Compute eigenpairs: $(\lambda_i,t_i)$ of \eqref{solveHK} and test for convergence.
\ENDFOR
\STATE 10. Compute eigenvectors: $\mathbf{x}_i=(I_d \otimes Q_{j+1})\bfR_{j+1}\underbar{$H$}_jt_i$.
\end{algorithmic}
\end{algorithm}

From the discussion above, it is clear that CORK reduces significantly the storage requirements with respect to a direct application of the rational Krylov method to the $(nd) \times (nd)$ GEP $\mathbf{A} - \la \mathbf{B}$, since essentially CORK represents $\mathbf{U}_j$ in terms of $n (j+d)$ parameters and, in addition, $n (j+d)\approx n j$ for moderate values of $d$. Therefore, the memory cost of CORK is approximately the cost of any Krylov method applied to an $n\times n$ GEP. Moreover, it can be seen in \cite[Section 5.4]{cork} that the orthogonalization cost of CORK is essentially independent of $d$ for moderate values of $d$, and, so, much lower than the orthogonalization cost of a direct application of rational Krylov to $\mathbf{A} - \la \mathbf{B}$. With respect to the comparison of the costs of the shift-and-invert steps in CORK (included in step 2 of Algorithm \ref{CORK}) and in rational Krylov (step 2 in Algorithm \ref{rat.krylov}), we can say that in CORK the particular structure of the pencil $\mathbf{A} - \la \mathbf{B}$ together with the fact that only one block of the vector $\mathbf{\hat{u}}$ is needed allow us to perform this step very efficiently by essentially solving just one ``difficult'' $n\times n$ linear system (see \cite[Algorithm 2]{cork}). In contrast, in rational Krylov the whole vector $\hat{\mathbf{u}}$ must be computed and there is some extra cost with respect to CORK even in the case the structure of $\mathbf{A} - \la \mathbf{B}$ is taken into account for solving the linear system $(\mathbf{A} - \theta_j \mathbf{B}) \mathbf{\hat{u}} = \mathbf{B} \mathbf{u}_j$. On the other hand,  there is some overhead cost involved in step 2 of  Algorithm \ref{CORK}, since, in CORK, the actual vector $\mathbf{u}_j$ has to be constructed before solving the linear system associated to the shift-and-invert step. Fortunately, according to \eqref{blockU}, this computation can be arranged as the single matrix-matrix product $Q_j [r_j^{(1)} \, \cdots \, r_j^{(d)}]$, where $r_j^{(1)}, \ldots , r_j^{(d)}$ are the blocks of the last column of $\mathbf{R}_j$, which allows optimal efficiency and cache usage on modern computers (see \cite[p. 577]{Kressner}).

Inspired in CORK, we will develop in Section \ref{sect.newTOAR} the new algorithm R-CORK to solve large-scale and sparse rational eigenvalue problems by using a decomposition similar to \eqref{blockU} for the bases of the rational Krylov subspaces associated to the linearization \eqref{linearrat} of the REP and by working in the spirit of the two levels of orthogonalization originally introduced in TOAR \cite{charlaTOAR,toar}. We will see that R-CORK has memory and computational advantages similar to those discussed for CORK in the previous paragraph.

\section{A new method for solving large-scale and sparse rational eigenvalue problems} \label{sect.newTOAR}

\subsection{A compact decomposition for rational Krylov subspaces of $\cA- \la \cB$}\label{comp.decomp}

Consider the matrices $\mathcal{A}$ and $\mathcal{B}$ in \eqref{matAB} and the rational Krylov recurrence relation \eqref{rec.relation} which is valid for arbitrary pencils. Our goal is to particularize such relation to the matrices $\mathcal{A}$ and $\mathcal{B}$ in \eqref{matAB} in order to save memory and orthogonalization costs. For this purpose, we will partitionate $\UU_{j+1}$ conformably to $\mathcal{A}$ and $\mathcal{B}$ as follows
\begin{equation}\label{divU}
\UU_{j+1}=[\UU_j \quad \mathbf{u}_{j+1}]=\left[ \begin{array}{cc} U_j^{(1)} & u_{j+1}^{(1)} \\ U_{j}^{(2)} & u_{j+1}^{(2)} \\ \vdots & \vdots \\ U_{j}^{(d)} & u_{j+1}^{(d)} \\  V_{j} & v_{j+1} \end{array} \right]
\end{equation}
where $U_j^{(i)} \in \BC^{n\times j}$, $u_{j+1}^{(i)}\in\BC^n$, for $i=1,\dots,d$, $V_j \in \BC^{s\times j}$,  and $v_{j+1}\in \BC^s$. Next, following CORK for the first $d$ blocks, we define the matrix $Q_j\in \BC^{n\times r_j}$ such that the columns of $Q_j$ are orthonormal with
\begin{equation}\label{defQ}
\mbox{span}\{Q_j\}=\mbox{span}\{U_j^{(1)},U_j^{(2)},\dots,U_j^{(d)}\}
\end{equation}
and rank$(Q_j)=r_j$.
Using \eqref{defQ} we can express
\begin{equation}\label{def.decom}
U_j^{(i)} = Q_jR_j^{(i)}, \quad i=1,2,\dots,d,
\end{equation}
where $R_j^{(i)}\in\BC^{r_j\times j}$ for $i=1,2,\dots,d$. Then, by using \eqref{def.decom}, we have
\begin{equation}\label{blocks}
\UU_j=\left[\begin{array}{c} Q_jR_j^{(1)} \\ Q_jR_j^{(2)}  \\ \vdots \\ Q_jR_j^{(d)} \\  V_j \end{array}\right] =
\left[\begin{array}{ccccc} Q_j & & & & \\ & Q_j & & &  \\ & & \ddots & & \\ & & & Q_j &  \\
& & & & I_s \end{array}\right]
\left[\begin{array}{c} R_j^{(1)} \\ R_j^{(2)}  \\ \vdots \\ R_j^{(d)} \\  V_j \end{array}\right].
\end{equation}
By introducing the notation
\begin{equation}\label{defQ.R}
\mathbf{Q}_j := \left[\begin{array}{c|c} (I_d \otimes Q_j) & 0_{dn\times s}   \\ \hline 0_{s\times dr_j} & I_s \end{array}\right] \in \BC^{(dn+s)\times (dr_j+s)} \quad \mbox{and } \quad \mathbf{R}_j:= \left[\begin{array}{c} R_j^{(1)} \\ R_j^{(2)}  \\ \vdots \\ R_j^{(d)} \\  V_j \end{array}\right] \in \BC^{(dr_j+s)\times j},
\end{equation}
we have $\UU_j=\mathbf{Q}_j\bfR_j$. Note that the columns of $\UU_j$ and $\mathbf{Q}_j$ are orthonormal, so the matrix $\mathbf{R}_j$ has orthonormal columns too.
With this notation, we can rewrite \eqref{rec.relation} as the following compact rational Krylov recurrence relation
\begin{equation}\label{cork.relation}
\AA \mathbf{Q}_{j+1} \mathbf{R}_{j+1} \underbar{$H$}_j = \BB \mathbf{Q}_{j+1} \mathbf{R}_{j+1} \underbar{$K$}_j.
\end{equation}

In order to prove that, as in CORK, we need only one vector to expand $Q_j$ into $Q_{j+1}$ and that, as a consequence, $r_j$ is considerably smaller than $jd$, i.e., that $\mathbf{Q}_j\bfR_j$ is indeed a compact representation of $\UU_j$, we will prove first the following Lemmas \ref{ULP.REP} and \ref{lemma.system}. We emphasize the relationship between Lemma \ref{ULP.REP} and Theorem \ref{ULP}, but also the difference coming from the presence of the strictly proper part $E(C-\la D)^{-1}F^T$ of the rational matrix $R(\la)$, which motivates the definition of the rational matrix $\mathbf{A}(\la)$ in Lemma \ref{ULP.REP}. Apart from this difference, we have stated Lemma \ref{ULP.REP} in a completely analogous way to Theorem \ref{ULP}, with the purpose of stressing the relation with CORK, but note that the simple particular structures of $M$, $N$, and $\mathbf{B}$ inherited from \eqref{matAB}-\eqref{ABreduc} imply that in Lemma \ref{ULP.REP}
\begin{equation*}
m_0=0_{(d-1)\times 1} \quad \mbox{and} \quad n_0=-e_{d-1}=-\left[\begin{array}{c} 0 \\ \vdots \\ 0 \\ 1 \end{array}\right] \in \BC^{d-1},
\end{equation*}
that $M_1$ and $N_1$ are also very simple, and that $\mathbf{\bar{B}}_1$ has only one nonzero block. Therefore, the factors $\cL(\mu)$ and $\cU(\mu)$ in Lemma \ref{ULP.REP} are simpler than the general ones in Theorem \ref{ULP}. Note also that Lemma \ref{lemma.system} is related to \cite[Lemma 4.3]{cork}, although again the strictly proper part of the rational matrix introduces relevant differences.
\begin{lemma}\label{ULP.REP}
Consider a rational matrix
$$
R(\lambda) = P(\lambda)-E(C-\lambda D)^{-1}F^T \in \BC (\la)^{n \times n},
$$
where $P(\lambda)=\sum_{i=0}^d{\lambda^iP_i}$, $P_i\in \BC^{n\times n}$ for $i=0,\dots,d$, $E$, $F\in \BC^{n \times s}$, $C$, $D\in \BC^{s\times s}$, $D$ is nonsingular, and $E(C-\lambda D)^{-1}F^T$ is a minimal realization.  Define the rational matrix
\begin{equation*}
\mathbf{A}(\lambda)=\left[ \dfrac{P_{d-1} \quad P_{d-2} \quad \cdots \quad P_1 \quad P_0-E(C-\lambda D)^{-1}F^T}{M\otimes I_n} \right],
\end{equation*}
and the constant matrix
\begin{equation*}
\mathbf{B}=\left[ \dfrac{-P_d \quad 0_n \quad \cdots \quad 0_n \quad 0_n}{N\otimes I_n} \right],
\end{equation*}
with $M=-I_{(d-1)\times d}$ and $N=-[0_{(d-1)\times 1}\quad I_{d-1}]$.
Let $\cP\in\BC^{d\times d}$ be any matrix permutation such that $\cP$ moves the first column of a matrix to the last column, this is,
\begin{equation}\label{permP}
\cP=[\cP_1 \quad e_1], \quad \cP_1\in\BC^{d\times (d-1)}, \quad \cP_1^T\cP_1 = I_{d-1}.
\end{equation}
Then, for every $\mu\in\BC$ which is not a pole of $R(\mu)$, i.e., such that $(C-\mu D)$ is nonsingular, we can factorize $\mathbf{A}(\mu) -\mu \mathbf{B}$ as follows
\begin{equation}\label{ULP.rep}
\mathbf{A}(\mu)-\mu\mathbf{B}=\cU(\mu)\cL(\mu)(\cP\otimes I_n),
\end{equation}
where
\begin{eqnarray*}
\mathcal{L}(\mu) &=& \left[ \begin{array}{cc} R(\mu) & 0 \\ (m_0-\mu n_0)\otimes I_n & (M_1 -\mu N_1)\otimes I_n \end{array} \right], \\
\mathcal{U}(\mu) &=& \left[ \begin{array}{cc} I_n & (\mathbf{\bar{A}}_1-\mu \mathbf{\bar{B}}_1)((M_1-\mu N_1)^{-1}\otimes I_n) \\ 0 & I_{(d-1)n} \end{array} \right],
\end{eqnarray*}
with
\begin{equation*}
M=:[m_0 \quad M_1]\cP, \quad
N=:[n_0 \quad N_1]\cP,
\end{equation*}
and
\begin{eqnarray*}
[P_{d-1} \quad \cdots \quad P_1 \quad P_0-E(C-\mu D)^{-1}F^T] &=:& [P_0-E(C-\mu D)^{-1}F^T \quad \mathbf{\bar{A}}_1](\cP\otimes I_n), \\
{[}-P_{d} \quad 0_{n} \quad \cdots  \quad  0_n{]} &=:& {[} 0_n \quad \mathbf{\bar{B}}_1 {]}(\cP \otimes I_n).
\end{eqnarray*}
\end{lemma}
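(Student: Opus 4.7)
My plan is to deduce the lemma from \thmref{ULP} by absorbing the rational perturbation into a modified polynomial. For fixed $\mu$ not a pole of $R$, I would introduce the degree-$d$ matrix polynomial
\begin{equation*}
\widetilde P(\lambda) := \sum_{i=1}^d \lambda^i P_i + \bigl(P_0 - E(C-\mu D)^{-1}F^T\bigr),
\end{equation*}
which satisfies $\widetilde P(\mu) = R(\mu)$ and has the same leading coefficient $P_d$ as $P$. I would then check that $\mathbf{A}(\mu) - \lambda \mathbf{B}$, viewed as a pencil in the auxiliary variable $\lambda$, is precisely the reverse Frobenius companion linearization of $\widetilde P$: it fits Definition \ref{strucpencil} with basis $f(\lambda) = [\lambda^{d-1}, \lambda^{d-2}, \ldots, \lambda, 1]^T$ and the given $M$, $N$. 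The rank condition on $M - \lambda N$ is immediate from its leading $-I_{d-1}$ block, and $(\mathbf{A} - \lambda\mathbf{B})(f(\lambda)\otimes I_n) = e_1 \otimes \widetilde P(\lambda)$ is the standard Horner identity.

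With this setup, applying \thmref{ULP} at the shift $\mu$ immediately yields the factorization \eqref{ULP.rep}, the $(1,1)$ block of $\cL(\mu)$ being $\widetilde P(\mu) = R(\mu)$. The only subtle point is that \thmref{ULP} puts $\alpha^{-1} I_n$ in the top-left block of $\cU(\mu)$ with $\alpha = e_1^T\cP f(\mu)$, whereas the lemma claims $I_n$. However, the assumption that the last column of $\cP$ is $e_1$ forces $\cP_{1,d} = 1$ to be the only nonzero entry in row 1 of $\cP$, so $e_1^T\cP f(\mu) = f_d(\mu) = 1$ regardless of the remaining freedom in $\cP$; hence $\alpha = 1$ and the simplification holds unconditionally.

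As a sanity check, one can also verify \eqref{ULP.rep} by direct block multiplication. Expanding $\cU(\mu)\cL(\mu)$ via the block-triangular structure of $\cU(\mu)$, three of the four blocks of $\cU(\mu)\cL(\mu)(\cP\otimes I_n)$ match those of $\mathbf{A}(\mu) - \mu\mathbf{B}$ trivially from the defining identities for $\bar{\mathbf{A}}_1, \bar{\mathbf{B}}_1, m_0, n_0, M_1, N_1$; the $(1,1)$ comparison then reduces to the purely polynomial identity $(\bar{\mathbf{A}}_1 - \mu\bar{\mathbf{B}}_1)\bigl((M_1-\mu N_1)^{-1}(m_0 - \mu n_0)\otimes I_n\bigr) = P_0 - P(\mu)$, the term $E(C-\mu D)^{-1}F^T$ cancelling from both sides. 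Using $m_0 = 0$, $n_0 = -e_{d-1}$, and the upper-bidiagonal form of $M_1 - \mu N_1$, one computes $(M_1 - \mu N_1)^{-1}(m_0 - \mu n_0) = -[\mu^{d-1}, \mu^{d-2}, \ldots, \mu]^T$, and multiplying by $\bar{\mathbf{A}}_1 - \mu\bar{\mathbf{B}}_1 = [P_{d-1}+\mu P_d, P_{d-2}, \ldots, P_1]$ yields $-\sum_{i=1}^d \mu^i P_i = P_0 - P(\mu)$, completing the identification. The main obstacle throughout is purely bookkeeping, namely keeping track of how the block permutation $\cP\otimes I_n$ rearranges the columns of $\mathbf{A}(\mu) - \mu\mathbf{B}$ and confirming the cancellation $\alpha = 1$; no new theoretical ingredient beyond \thmref{ULP} is required.
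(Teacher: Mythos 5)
Your proposal is correct, and its primary route is genuinely different from the paper's. The paper proves the lemma by direct block multiplication: it computes $\cU(\mu)\cL(\mu)$, notes that after multiplying by $\cP\otimes I_n$ all blocks except the leading one match $\mathbf{A}(\mu)-\mu\mathbf{B}$ by the defining identities, and reduces everything to the purely polynomial identity $P(\mu)+(\mathbf{\bar{A}}_1-\mu \mathbf{\bar{B}}_1)(((M_1-\mu N_1)^{-1}(m_0-\mu n_0))\otimes I_n)=P_0$, which it dispatches by remarking that it is elementary and is in any case established inside the proof of Theorem~\ref{ULP} in \cite{cork}. Your main argument instead freezes $\mu$ in the rational term, absorbs it into the constant coefficient of a modified degree-$d$ polynomial $\widetilde{P}$ with $\widetilde{P}(\mu)=R(\mu)$, checks that $\mathbf{A}(\mu)-\lambda\mathbf{B}$ is a structured linearization pencil of $\widetilde{P}$ in the sense of Definition~\ref{strucpencil}, and invokes Theorem~\ref{ULP} wholesale, together with the correct observation that the form \eqref{permP} of $\cP$ forces $\alpha=e_1^T\cP f(\mu)=f_d(\mu)=1$ (precisely the paper's remark after the lemma). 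This reduction is cleaner and makes the dependence on CORK explicit, but two small caveats should be stated: regularity of $\widetilde{P}$, needed to apply Definition~\ref{strucpencil} and Theorem~\ref{ULP} literally, rests on the paper's standing assumption that $P_d$ is nonsingular; and Theorem~\ref{ULP} as quoted is an existence statement over $\cP$, whereas the lemma asserts the factorization for \emph{every} $\cP$ of the form \eqref{permP}, so you must use that the factorization holds for any $\cP$ making $M_1-\mu N_1$ invertible (which is what the proof in \cite{cork} shows, and which holds here for all $\mu$ by the bidiagonal structure). Your ``sanity check'' is essentially the paper's own proof with the algebra actually carried out; note only that your explicit formula $(M_1-\mu N_1)^{-1}(m_0-\mu n_0)=-[\mu^{d-1},\ldots,\mu]^T$ corresponds to one particular choice of $\cP_1$ (the cyclic one), but since the product $(\mathbf{\bar{A}}_1-\mu\mathbf{\bar{B}}_1)(((M_1-\mu N_1)^{-1}(m_0-\mu n_0))\otimes I_n)$ is independent of that choice, the verification covers the general case. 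Between your two arguments nothing essential is missing.
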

\begin{proof}
Observe first that the definitions of $M$, $N$ and $\cP$ imply trivially that $M_1-\mu N_1$ is nonsingular for every $\mu \in \BC$. By a direct matrix multiplication, we obtain
\begin{eqnarray*}
\cU(\mu)\cL(\mu) &=& \left[ \begin{array}{cc} R(\mu)+({\mathbf{\bar{A}}}_1-\mu{\mathbf{\bar{B}}}_1)(((M_1-\mu N_1)^{-1}(m_0-\mu n_0))\otimes I_n) & ({\mathbf{\bar{A}}}_1-\mu{\mathbf{\bar{B}}}_1)  \\
(m_0 - \mu n_0) \otimes I_n & (M_1-\mu N_1)\otimes I_n \end{array} \right] \\
&=& \left[ \dfrac{R(\mu)+({\mathbf{\bar{A}}}_1-\mu{\mathbf{\bar{B}}}_1)(((M_1-\mu N_1)^{-1}(m_0-\mu n_0))\otimes I_n) \quad ({\mathbf{\bar{A}}}_1-\mu{\mathbf{\bar{B}}}_1)}{(M-\mu N)\cP^T\otimes I_n} \right].
\end{eqnarray*}
Therefore, we only need to prove that
\begin{equation*}
R(\mu)+({\mathbf{\bar{A}}}_1-\mu{\mathbf{\bar{B}}}_1)(((M_1-\mu N_1)^{-1}(m_0-\mu n_0))\otimes I_n)= P_0-E(C-\mu D)^{-1}F^T,
\end{equation*}
which is equivalent to prove that
\begin{equation}\label{ULP.prove}
P(\mu)+({\mathbf{\bar{A}}}_1-\mu{\mathbf{\bar{B}}}_1)(((M_1-\mu N_1)^{-1}(m_0-\mu n_0))\otimes I_n)= P_0.
\end{equation}
The proof of \eqref{ULP.prove} is a very simple algebraic manipulation as a consequence of the extremely simple structures of $m_0$ and $n_0$, $M_1$ and $N_1$ in this case. Another proof comes from the observation that \eqref{ULP.prove} holds because it is proved for proving the ULP decomposition in Theorem \ref{ULP} (see \cite[pp. 823-824]{cork}).
\end{proof}
\begin{remark} Observe that Theorem \ref{ULP} involves the constant $\alpha=e_1^T\cP f(\mu)$, which is not present in Lemma \ref{ULP.REP}. The reason is that in Lemma \ref{ULP.REP}, this constant is equal to $1$ as a consequence of the structure of $\cP$ and \eqref{eigenv}.
\end{remark}
\begin{lemma}\label{lemma.system}
Let $\AA$ and $\BB$ be the matrices defined in \eqref{matAB}. Consider the linear system
\begin{equation}\label{lemma.sys}
(\AA-\mu \BB)\mathbf{x}=\BB \mathbf{w},
\end{equation}
where $\mathbf{x}=[{x}^{(1)^T}, {x}^{(2)^T},\cdots,{x}^{(d)^T},{y}^T]^T$ and $\mathbf{w}=[{w}^{(1)^T}, {w}^{(2)^T},\cdots,{w}^{(d)^T},{z}^T]^T$, the blocks $x^{(i)}, w^{(i)} \in \BC^{n}$, $i=1,2,\dots, d$, $y,z \in\BC^{s}$, and $\mu$ is not a pole of $R(\mu)$, i.e., $(C-\mu D)$ is nonsingular.
Then, the block $x^{(d)}$  of $\mathbf{x}$  can be computed by solving the following $n\times n$ linear system whose coefficient matrix is $R(\mu)$ in \eqref{newreprat}:
\begin{equation*}
R(\mu)x^{(d)}=-P_dw^{(1)}-E(C-\mu D)^{-1}Dz+(\mathbf{\bar{A}}_1-\mu \mathbf{\bar{B}}_1)((M_1-\mu N_1)^{-1}\otimes I_n)w^{(2,\dots,d)},
\end{equation*}
where the matrices introduced in Lemma \ref{ULP.REP} are used and $w^{(2,\ldots ,d)} = [w^{(2)^T}, \cdots, w^{(d)^T}]^T$. The remaining blocks $x^{(i)}$ for $i=1,\cdots,d-1$ of $\mathbf{x}$ can be obtained as linear combinations of $x^{(d)}$ and $w^{(i)}$, $i=2,\dots, d$. More precisely, if the permutation matrix $\cP$ in \eqref{permP} is expressed as
$\cP = \left[
\begin{array}{cc}
0 & 1 \\
\widetilde{\cP} & 0
\end{array} \right],
$
then $x^{(1,\ldots ,d-1)} = [x^{(1)^T}, \cdots, x^{(d-1)^T}]^T$ satisfies the linear system
\[
(\widetilde{\cP} \otimes I_n) \, x^{(1,\ldots ,d-1)} = - ((M_1 - \mu N_1)^{-1} \otimes I_n) \, (w^{(2,\ldots ,d)} + ((m_0 - \mu n_0) \otimes I_n) x^{(d)}) \, .
\]
In addition, $y$ can be computed by solving the $s\times s$ linear system
\begin{equation*}
(C-\mu D)y=Dz-F^Tx^{(d)}.
\end{equation*}
\end{lemma}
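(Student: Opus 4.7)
The plan is to derive the three formulas by a block-by-block reduction of the system $(\cA-\mu\cB)\mathbf{x}=\cB\mathbf{w}$ that exposes the strictly proper part, and then apply the ULP decomposition of Lemma~\ref{ULP.REP} to a reduced $(nd)\times(nd)$ system whose coefficient matrix is exactly the rational pencil $\mathbf{A}(\mu)-\mu\mathbf{B}$ appearing there. Using \eqref{ABreduc} and the block-diagonal structure of $\cB$, the identity $(\cA-\mu\cB)\mathbf{x}=\cB\mathbf{w}$ splits into
\[
(\mathbf{A}-\mu\mathbf{B})\,\mathbf{x}_{1:d}+(e_1\otimes E)\,y=\mathbf{b},\qquad
(e_d^{T}\otimes F^{T})\,\mathbf{x}_{1:d}+(C-\mu D)\,y=D\,z,
\]
where $\mathbf{x}_{1:d}=[x^{(1)T},\ldots,x^{(d)T}]^{T}$ and $\mathbf{b}=[-P_d w^{(1)};\,-w^{(2)};\,\ldots;\,-w^{(d)}]$.

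From the second block equation, since $C-\mu D$ is nonsingular, I immediately obtain $y=(C-\mu D)^{-1}(Dz-F^{T}x^{(d)})$, which is the third claim. Substituting this expression into the first block equation gives
\[
\bigl(\mathbf{A}-\mu\mathbf{B}-(e_1 e_d^{T})\otimes E(C-\mu D)^{-1}F^{T}\bigr)\,\mathbf{x}_{1:d}=\mathbf{b}-e_1\otimes E(C-\mu D)^{-1}Dz.
\]
A direct comparison with the definition of $\mathbf{A}(\lambda)$ in Lemma~\ref{ULP.REP} shows that the coefficient matrix on the left is precisely $\mathbf{A}(\mu)-\mu\mathbf{B}$, so the reduced system can be written as $(\mathbf{A}(\mu)-\mu\mathbf{B})\,\mathbf{x}_{1:d}=\mathbf{r}$ with $\mathbf{r}$ having first block $-P_d w^{(1)}-E(C-\mu D)^{-1}Dz$ and remaining blocks $-w^{(2,\ldots,d)}$.

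At this point I invoke the factorization $\mathbf{A}(\mu)-\mu\mathbf{B}=\cU(\mu)\cL(\mu)(\cP\otimes I_n)$ from Lemma~\ref{ULP.REP} and solve the three triangular systems in turn. Multiplying by $\cU(\mu)^{-1}$ transforms the top block of $\mathbf{r}$ into
\[
-P_d w^{(1)}-E(C-\mu D)^{-1}Dz+(\bar{\mathbf{A}}_1-\mu\bar{\mathbf{B}}_1)((M_1-\mu N_1)^{-1}\otimes I_n)\,w^{(2,\ldots,d)},
\]
while leaving the lower blocks as $-w^{(2,\ldots,d)}$. Inverting $\cL(\mu)$ then splits into: solving $R(\mu)\,t_1=\{\text{top block}\}$ for the first component, and solving $((M_1-\mu N_1)\otimes I_n)\,t_{2:d}=-w^{(2,\ldots,d)}-((m_0-\mu n_0)\otimes I_n)\,t_1$ for the rest. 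Finally, using the explicit block form $\cP=\bigl[\begin{smallmatrix}0&1\\ \widetilde{\cP}&0\end{smallmatrix}\bigr]$, the permutation $(\cP\otimes I_n)\mathbf{x}_{1:d}=[t_1;\,t_{2:d}]$ identifies $t_1=x^{(d)}$ and $t_{2:d}=(\widetilde{\cP}\otimes I_n)x^{(1,\ldots,d-1)}$, which delivers exactly the first and second formulas of the statement.

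The main technical nuisance, rather than a true obstacle, is bookkeeping the permutation $\cP$ consistently: one must verify that its action on the block vector sends $x^{(d)}$ to the top block (so that the $R(\mu)$-system recovers $x^{(d)}$, not some other component), and that the remaining blocks reassemble into $(\widetilde{\cP}\otimes I_n)x^{(1,\ldots,d-1)}$ as stated. Once this is checked, the proof is essentially a direct reading off of the ULP factorization, and well-posedness of each triangular solve is guaranteed by the nonsingularity hypotheses on $C-\mu D$, $R(\mu)$ and $M_1-\mu N_1$ already established in Lemma~\ref{ULP.REP}.
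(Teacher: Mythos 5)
Your proof is correct and follows essentially the same route as the paper: split the system according to \eqref{ABreduc}, solve the second block equation for $y$, substitute to get the reduced system with coefficient matrix $\mathbf{A}(\mu)-\mu\mathbf{B}$, and read off the three stated equations from the ULP factorization of Lemma~\ref{ULP.REP}, carrying out explicitly the block-triangular solves and the permutation bookkeeping that the paper dismisses as ``elementary matrix manipulations.'' The only (inconsequential) caveat is your closing remark that nonsingularity of $R(\mu)$ is ``established in Lemma~\ref{ULP.REP}''---it is not (it holds when $\mu$ is not an eigenvalue), but neither the statement of the lemma nor the derivation of the equations actually needs it.
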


\begin{proof}
Rewrite the matrix pencil \eqref{linearrat} as in \eqref{ABreduc}
\begin{equation*}
\AA-\mu \BB= \left[ \begin{array}{c|c} \bf{A}-\mu\bf{B} & e_1\otimes E \\ \hline
e_d^T\otimes F^T & C - \mu D \end{array}\right]
\end{equation*}
with
\begin{equation*}
\bfA=\left[\dfrac{P_{d-1} \quad P_{d-2} \quad \cdots \quad P_{0}}{M\otimes I_n}\right], \quad
\bfB=\left[\dfrac{-P_{d} \quad 0_{n} \quad \cdots \quad 0_{n}}{N\otimes I_n}\right],
\end{equation*}
\begin{equation*}
M=-I_{(d-1)\times d}, \quad \mbox{and} \quad N=-[0_{(d-1)\times 1}\quad I_{(d-1)}].
\end{equation*}
Then, we can solve the system \eqref{lemma.sys} by solving
\begin{eqnarray}
(\mathbf{A} -\mu \mathbf{B})x^{(1,2,\dots,d)}+(e_1\otimes E)y&=&\mathbf{B} w^{(1,2,\dots,d)}, \label{eq.1} \\
(e_d^T\otimes F^T)x^{(1,2,\dots,d)} + (C-\mu D)y &=& Dz, \label{eq.2}
\end{eqnarray}
where $x^{(1,2,\dots,d)}=[{x}^{(1)^T}, {x}^{(2)^T},\cdots,{x}^{(d)^T}]^T$ and $w^{(1,2,\dots,d)}=[{w}^{(1)^T}, {w}^{(2)^T},\cdots,{w}^{(d)^T}]^T$. The second equation is the equation for $y$ in the statement.
By replacing $y=(C-\mu D)^{-1}(Dz-F^Tx^{(d)})$ from \eqref{eq.2} in \eqref{eq.1}, and using the notation of Lemma \ref{ULP.REP}, we obtain
\begin{eqnarray*}
(\mathbf{A}-\mu \mathbf{B})x^{(1,2,\dots,d)}-(e_1\otimes E)(C-\mu D)^{-1}F^Tx^{(d)}&=&\mathbf{B}w^{(1,2,\dots,d)}-(e_1\otimes E)(C-\mu D)^{-1}Dz, \\
\left[\dfrac{P_{d-1}+\mu P_d \quad P_{d-2} \quad \cdots \quad P_0-E(C-\mu D)^{-1}F^T}{(M-\mu N)\otimes I_n}\right] x^{(1,2,\dots,d)}&=&
-\left[\begin{array}{c} P_dw^{(1)} +E(C-\mu D)^{-1}Dz\\ \hline w^{(2,\dots,d)}\end{array}\right], \\
(\mathbf{A}(\mu)-\mu \mathbf{B})x^{(1,2,\dots,d)}&=& -\left[\begin{array}{c} P_dw^{(1)} +E(C-\mu D)^{-1}Dz\\ \hline w^{(2,\dots,d)}\end{array}\right].
\end{eqnarray*}
By combining the factorization \eqref{ULP.rep} in Lemma \ref{ULP.REP} and the equation above, it is immediate to see that the blocks $x^{(i)}$ for $i=1,\dots,d-1$ of $\mathbf{x}$ are linear combinations of $x^{(d)}$ and the blocks $w^{(i)}$, $i=2,\dots, d$. In addition, some elementary matrix manipulations with the matrices $\mathcal{U}(\mu)$ and $\mathcal{L}(\mu)$ in \eqref{ULP.rep} and the structure of the permutation matrix $\cP$ lead to the equations for $x^{(1,\ldots , d-1)}$ and $x^{(d)}$ in the statement. This finishes the proof.
\end{proof}
\begin{remark}\label{rem.P}
Since the matrices $P_i$ in the matrix $\mathcal{A}$ in \eqref{matAB} appear in decreasing index order, it is natural to choose in Lemmas \ref{ULP.REP} and \ref{lemma.system} the permutation $\cP$ as follows
\begin{equation}\label{permPpart}
\mathcal{P}=\left[ \begin{array}{ccc}    &  & 1 \\   & \iddots  &  \\ 1  &  &  \end{array}\right].
\end{equation}
\end{remark}

As announced before, Lemma \ref{lemma.system} is the key result that allows us to prove trough Theorems \ref{th.Q} and \ref{th.rjRCORK} that only one vector is needed to expand $Q_j$ into $Q_{j+1}$ and, so, that the representation \eqref{blocks} for $\mathbf{U}_j$ is indeed compact. Moreover, the equations for $x^{(d)}$, $x^{(1,\ldots,d-1)}$, and $y$ deduced in Lemma \ref{lemma.system} lead to the efficient Algorithm \ref{solver.system} for solving the linear system \eqref{lemma.sys}, which is fundamental for performing efficiently the shift-and-invert step in the R-CORK method developed in Section \ref{RCORK-subsect}. Observe that in Algorithm \ref{solver.system} a notation similar to that in Lemma \ref{lemma.system} is used and that $t^{(1,\ldots,d)} := (\cP \otimes I_n) x^{(1,\ldots,d)}$ is computed first, performing later the inverse permutation for getting $x^{(1,\ldots,d)}$.

\begin{algorithm}[H]
\caption{Solver for the linear system $(\AA -\mu \BB) \mathbf{x}=\BB {\mathbf{w}}$, with $\cA$ and $\cB$ as in \eqref{matAB}}\label{solver.system}
\begin{algorithmic}
\REQUIRE $\AA$, $\BB \in \BC^{(nd+s)\times (nd+s)}$ as in \eqref{matAB}, $\mu \in \BC$ such that $(C-\mu D)^{-1}$ exists and $\mathbf{w} \in \C^{nd+s}$.
\ENSURE The solution $\mathbf{x}$ of the linear system.
\STATE 1. Compute $t=\mathbf{B}w^{(1,2,\dots,d)}-(e_1\otimes E)(C-\mu D)^{-1}Dz$ as $t=-\left[ \frac{P_dw^{(1)}+E(C-\mu D)^{-1}Dz}{w^{(2,\dots,d)}}\right]$.
\STATE Solve the block upper triangular system associated to $\cU(\mu)$ in \eqref{ULP.rep}:
\STATE 2. $t^{(1)}=t^{(1)}-(\mathbf{\bar{A}}_1-\mu \mathbf{\bar{B}}_1)((M_1-\mu N_1)^{-1}\otimes I_n)t^{(2,\dots,d)}$.
\STATE Solve the block lower triangular system associated to $\cL(\mu)$ in \eqref{ULP.rep}:
\STATE 3. $t^{(1)}=(R(\mu))^{-1}t^{(1)}$.
\STATE 4. $t^{(2,\dots,d)}=((M_1-\mu N_1)^{-1}\otimes I_n)(t^{(2,\dots,d)}-((m_0-\mu n_0)\otimes I_n)t^{(1)})$.
\STATE Permute the blocks of $t^{(1,2,\dots,d)}$:
\STATE 5. $x^{(1,2,\dots, d)}=(\mathcal{P}^T\otimes I_n)t^{(1,2,\dots,d)}$.
\STATE Compute the block $y$ of ${\mathbf{x}}$
\STATE 6. $y=(C-\mu D)^{-1}(Dz-F^Tx^{(d)})$.
\end{algorithmic}
\end{algorithm}
\begin{remark} \label{rem.costlinsolver}
The multiplications by inverses in Algorithm \ref{solver.system} have to be understood, in principle, as solutions of linear systems and the key observation on Algorithm \ref{solver.system} is that all the involved linear systems have sizes smaller than the size $(n d +s) \times (n d +s)$ of $(\cA - \mu \cB)$ as we discuss in this remark. The only linear system which is always large is the one in line 3 involving $R(\mu)$ which has the size $n\times n$ of the original REP. Clearly, solving the system in line 3 requires to construct $R(\mu)$ (analogously to \cite{Kressner,cork}), which in the case $(C-\mu D)$ is large and complicated might be performed more efficiently trough \eqref{formrat} than through \eqref{newreprat}, though this depends on each particular problem. However, we emphasize once again that the matrix $(C-\mu D)\in \BC^{s\times s}$ is in many applications \cite{visco,SuBai} very small, since $s \ll n$, and has in addition a very simple structure, which imply that it is often possible just to compute $(C-\mu D)^{-1}$ and to perform the corresponding matrix multiplications to construct $R(\mu)$ through \eqref{newreprat}. These comments on the size $s \ll n$ also apply to the linear systems involving $(C-\mu D)\in \BC^{s\times s}$ in lines 1 and 6 which are very often in practice very small. Finally, the linear systems involving $(M_1-\mu N_1)\otimes I_n$ have size $(d-1) n \times (d-1)n$ and look very large, but they are block linear systems very easy to solve with cost $2n(d-2)$ flops by using simple two term recurrence relations. For instance, if $\cP$ is the permutation in \eqref{permPpart} then
$$
M_1-\mu N_1=\left[ \begin{array}{cccc} & & \mu & -1 \\  & \iddots & \iddots & \\ \mu & \iddots & & \\ -1 &  &  & \end{array}\right] \in \BC^{(d-1)\times (d-1)},
$$
and the solution of $((M_1 - \mu N_1) \otimes I_n) \mathbf{x} = \mathbf{b}$ partitioning the vectors in $(d-1)$ blocks of size $n \times 1$ can be obtained as
$x^{(1)} = -b^{(d-1)}$ and $x^{(i)} = \mu \, x^{(i-1)} - b^{(d-i)}$ for $i=2,3, \ldots,d-1$.
\end{remark}
The following theorems are similar to results obtained in \cite[Theorems 4.4 and 4.5]{cork}.
\begin{theorem}\label{th.Q}
Let $Q_j$ be defined as in \eqref{defQ}. Then,
\begin{equation}
\mbox{span}\{Q_{j+1}\} = \mbox{span}\{Q_j,u_{j+1}^{(d)}\}.
\end{equation}
\end{theorem}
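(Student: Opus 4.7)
The plan is to reduce the claim to a single application of Lemma \ref{lemma.system}. By \eqref{defQ} applied at step $j+1$,
$$\mbox{span}\{Q_{j+1}\}=\mbox{span}\{U_{j+1}^{(1)},\ldots,U_{j+1}^{(d)}\}=\mbox{span}\{Q_j,u_{j+1}^{(1)},\ldots,u_{j+1}^{(d)}\},$$
so the inclusion $\mbox{span}\{Q_j,u_{j+1}^{(d)}\}\subseteq\mbox{span}\{Q_{j+1}\}$ is immediate, and the whole content of the theorem reduces to establishing
$$u_{j+1}^{(i)}\in\mbox{span}\{Q_j,u_{j+1}^{(d)}\}\qquad\text{for }i=1,\ldots,d-1.$$

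To prove this, I would exploit the rational Krylov recurrence underlying the construction of $\mathbf{u}_{j+1}$: writing $\hat{\mathbf{u}}:=(\cA-\theta_j\cB)^{-1}\cB\mathbf{u}_j$ one has $h_{j+1,j}\mathbf{u}_{j+1}=\hat{\mathbf{u}}-\mathbf{U}_j h_j$, which block-by-block becomes $h_{j+1,j}u_{j+1}^{(i)}=\hat{u}^{(i)}-U_j^{(i)}h_j$ for every $i=1,\ldots,d$. Since $U_j^{(i)}$ sits inside $\mbox{span}\{Q_j\}$ by \eqref{defQ}, the task reduces to showing $\hat{u}^{(i)}\in\mbox{span}\{Q_j,u_{j+1}^{(d)}\}$ for $i=1,\ldots,d-1$.

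Now I would invoke Lemma \ref{lemma.system} with $\mu=\theta_j$, $\mathbf{x}=\hat{\mathbf{u}}$ and $\mathbf{w}=\mathbf{u}_j$. Its explicit formula
$$(\widetilde{\cP}\otimes I_n)\,\hat{u}^{(1,\ldots,d-1)}=-\bigl((M_1-\theta_j N_1)^{-1}\otimes I_n\bigr)\Bigl(u_j^{(2,\ldots,d)}+\bigl((m_0-\theta_j n_0)\otimes I_n\bigr)\hat{u}^{(d)}\Bigr)$$
exhibits each $\hat{u}^{(i)}$ with $i<d$ as a linear combination of $\hat{u}^{(d)}$ and the blocks $u_j^{(2)},\ldots,u_j^{(d)}$. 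The vectors $u_j^{(\ell)}$ are the last columns of $U_j^{(\ell)}$, hence belong to $\mbox{span}\{Q_j\}$, and the identity $\hat{u}^{(d)}=h_{j+1,j}u_{j+1}^{(d)}+U_j^{(d)}h_j$ places $\hat{u}^{(d)}$ in $\mbox{span}\{Q_j,u_{j+1}^{(d)}\}$. Combining these facts yields the desired inclusion.

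Once Lemma \ref{lemma.system} is in hand, the argument is essentially bookkeeping, so no step is a serious obstacle; the only subtlety is aligning the distinguished block (the $d$-th one in the statement) with the block singled out by the permutation $\cP$ chosen as in Remark \ref{rem.P} — precisely the choice that allows Lemma \ref{lemma.system} to express the first $d-1$ blocks of $\hat{\mathbf{u}}$ in terms of its $d$-th block rather than in terms of some other block.
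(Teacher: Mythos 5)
Your proof is correct and follows essentially the same route as the paper, which disposes of the theorem by citing definition \eqref{defQ} and Lemma \ref{lemma.system} with $\mu=\theta_j$ and $\mathbf{w}=\mathbf{u}_j$ (deferring the bookkeeping to the proof of the analogous CORK result). The details you spell out — the block-wise Gram--Schmidt identity $h_{j+1,j}u_{j+1}^{(i)}=\hat{u}^{(i)}-U_j^{(i)}h_j$ and the expression of $\hat{u}^{(1,\ldots,d-1)}$ in terms of $\hat{u}^{(d)}$ and $u_j^{(2)},\ldots,u_j^{(d)}$ via the lemma — are exactly the steps the paper leaves implicit, so there is nothing to add.
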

\begin{proof}
The proof is immediate from definition \eqref{defQ} and Lemma \ref{lemma.system} with $\mu=\theta_j$ and $\mathbf{w}=\mathbf{u}_j$ (see proof of \cite[Theorem 4.4]{cork}).
\end{proof}
\begin{theorem} \label{th.rjRCORK}
Let $Q_j$ be defined as in \eqref{defQ}. Then
\begin{equation}\label{ineq.r}
r_j<d+j.
\end{equation}
\end{theorem}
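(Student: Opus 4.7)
The plan is to prove the inequality $r_j < d + j$ by induction on $j$, using Theorem \ref{th.Q} as the engine for the inductive step.

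For the base case $j = 1$, I would observe that the blocks $U_1^{(1)}, U_1^{(2)}, \ldots, U_1^{(d)}$ obtained from the partition \eqref{divU} of $\mathbf{U}_1 = [\mathbf{u}_1]$ are each single vectors in $\BC^n$. Therefore the subspace $\text{span}\{U_1^{(1)}, \ldots, U_1^{(d)}\}$ has dimension at most $d$, which gives $r_1 \leq d < d + 1$, as required.

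For the inductive step, I would assume $r_j < d + j$ and then invoke Theorem \ref{th.Q}, which states that $\text{span}\{Q_{j+1}\} = \text{span}\{Q_j, u_{j+1}^{(d)}\}$. This immediately yields the bound $r_{j+1} \leq r_j + 1$, since $Q_{j+1}$ spans at most one extra dimension beyond $Q_j$. Combining this with the inductive hypothesis,
\[
r_{j+1} \leq r_j + 1 < (d + j) + 1 = d + (j+1),
\]
which completes the induction.

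There is essentially no obstacle here, since all the heavy lifting has been carried out already: Lemma \ref{lemma.system} established that the last block $x^{(d)}$ of the solution of the shift-and-invert system is obtained by solving an $n \times n$ linear system with coefficient matrix $R(\mu)$, and the remaining blocks $x^{(1)}, \ldots, x^{(d-1)}$ are linear combinations of $x^{(d)}$ and the previous blocks $w^{(2)}, \ldots, w^{(d)}$. This structural fact is precisely what Theorem \ref{th.Q} packages, guaranteeing that moving from $Q_j$ to $Q_{j+1}$ costs at most one new orthogonal direction per Krylov step. The bound \eqref{ineq.r} is then a two-line consequence of this observation plus the trivial initial count at $j=1$.
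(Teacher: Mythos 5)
Your proof is correct and follows essentially the same route as the paper: induction on $j$, with the base case $r_1 \leq d$ coming from the fact that $\mathrm{span}\{Q_1\}$ is spanned by the $d$ blocks of $\mathbf{u}_1$, and the inductive step $r_{j+1} \leq r_j + 1$ supplied by Theorem \ref{th.Q}. No gaps; the concluding remarks about Lemma \ref{lemma.system} are accurate context but not needed for the argument itself.
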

\begin{proof}
We will prove this theorem by induction. From the definition of $Q_j$ in \eqref{defQ}, we have that
\begin{equation*}
\mbox{span}\{Q_1\}=\mbox{span}\{u_1^{(1)}, u_1^{(2)},\dots,u_1^{(d)}\},
\end{equation*}
so $r_1\leq d$. Assuming that the inequality \eqref{ineq.r} is satisfied until $j-1$, then we have by Theorem \ref{th.Q} that $r_{j}\leq r_{j-1} +1 < d+j$.
\end{proof}
By considering the inequality \eqref{ineq.r} and the fact that $r_j$ increases at most $1$ in each iteration,  we will show the possible structures of the expansion of the first $d$ blocks of the matrix $\mathbf{R}_j$ defined in \eqref{defQ.R}.
\begin{lemma}\label{expand.R}
Let $\mathbf{R}_j\in \BC^{(dr_j+s)\times j}$ be defined as in \eqref{defQ.R}. Then, the first $d$ blocks of the matrix $\mathbf{R}_{j+1}\in \BC^{(dr_{j+1}+s)\times (j+1)}$ can take the following forms:
\begin{itemize}
\item if $r_{j+1}>r_j$
\begin{equation*}
R_{j+1}^{(i)}=\left[ \begin{array}{c} R_j^{(i)} \\ 0_{1\times j} \end{array} r_{j+1}^{(i)} \right], \quad i=1,2,\dots,d,
\end{equation*}
where $r_{j+1}^{(i)}\in \BC^{r_{j+1}}$, or
\item if $r_{j+1}=r_j$
\begin{equation*}
R_{j+1}^{(i)}=\left[ R_j^{(i)}\quad r_{j+1}^{(i)} \right], \quad i=1,2,\dots,d,
\end{equation*}
with $r_{j+1}^{(i)}\in \BC^{r_{j+1}}$.
\end{itemize}
\end{lemma}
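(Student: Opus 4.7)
The plan is to derive the lemma directly from the defining identity $U_{j+1}^{(i)} = Q_{j+1}R_{j+1}^{(i)}$ for $i=1,\ldots,d$, partition $U_{j+1}^{(i)}=[\,U_j^{(i)}\ \ u_{j+1}^{(i)}\,]$, and combine this with the description of how $Q_j$ is extended into $Q_{j+1}$ provided by Theorem \ref{th.Q}. The key observation I will use is that, by the very definition \eqref{defQ} of $Q_{j+1}$, \emph{every} block $u_{j+1}^{(i)}$, $i=1,\ldots,d$ (and not only $u_{j+1}^{(d)}$), lies in $\mathrm{span}\{Q_{j+1}\}$, so that the coordinate vector $r_{j+1}^{(i)}\in\BC^{r_{j+1}}$ such that $u_{j+1}^{(i)}=Q_{j+1}r_{j+1}^{(i)}$ is well defined for every $i$.

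\medskip

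First I would dispatch the case $r_{j+1}>r_j$. By Theorem \ref{th.Q} we have $r_{j+1}=r_j+1$ and we can take $Q_{j+1}=[\,Q_j\ \ q_{j+1}\,]$, where $q_{j+1}$ is the unit vector obtained by orthonormalizing $u_{j+1}^{(d)}$ against $Q_j$. Because $U_j^{(i)}=Q_jR_j^{(i)}$, one rewrites the first $j$ columns as
\[
U_j^{(i)} \;=\; [\,Q_j\ \ q_{j+1}\,]\begin{bmatrix} R_j^{(i)} \\ 0_{1\times j}\end{bmatrix},
\]
which is exactly the left block claimed for $R_{j+1}^{(i)}$. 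Appending the new column $r_{j+1}^{(i)}\in\BC^{r_{j+1}}$ that represents $u_{j+1}^{(i)}$ in the basis $Q_{j+1}$ yields the stated form.

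\medskip

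Next I would handle the case $r_{j+1}=r_j$. Here Theorem \ref{th.Q} forces $u_{j+1}^{(d)}\in\mathrm{span}\{Q_j\}$, hence $\mathrm{span}\{Q_{j+1}\}=\mathrm{span}\{Q_j\}$, and we may simply set $Q_{j+1}=Q_j$. Then $U_j^{(i)}=Q_{j+1}R_j^{(i)}$, so the first $j$ columns of $R_{j+1}^{(i)}$ coincide with $R_j^{(i)}$, while the last column is the coordinate vector $r_{j+1}^{(i)}\in\BC^{r_{j+1}}$ such that $u_{j+1}^{(i)}=Q_{j+1}r_{j+1}^{(i)}$; existence of such a vector is guaranteed again by the definition \eqref{defQ} of $Q_{j+1}$.

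\medskip

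I do not expect a genuine obstacle here: the lemma is essentially a bookkeeping consequence of \eqref{defQ}–\eqref{def.decom} and Theorem \ref{th.Q}. The only subtle point, which I would state explicitly, is that Theorem \ref{th.Q} is formulated in terms of $u_{j+1}^{(d)}$ alone, whereas the lemma needs a coordinate representation of \emph{every} block $u_{j+1}^{(i)}$ in $Q_{j+1}$; as noted above, this is immediate because $u_{j+1}^{(i)}$ is one of the columns defining the span in \eqref{defQ} at the index $j+1$.
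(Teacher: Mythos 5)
Your argument is correct and coincides with what the paper intends: the lemma is stated there without proof, as pure bookkeeping following from the definition \eqref{defQ} applied at step $j+1$ together with Theorem \ref{th.Q} (and the explicit new columns later appear in \eqref{last.R}--\eqref{last.R2}), which is exactly the reasoning you give. Your explicit remark that every block $u_{j+1}^{(i)}$, not just $u_{j+1}^{(d)}$, lies in $\mathrm{span}\{Q_{j+1}\}$ is the right point to make and matches the paper's implicit use of \eqref{defQ}.
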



\subsection{The R-CORK method}\label{RCORK-subsect}
In this section, we will introduce the method to solve large-scale and sparse rational eigenvalue problems based on the compact representation presented in Section \ref{comp.decomp} of the orthonormal bases of the rational Krylov subspaces of the linearization $\cA - \la \cB$ in \eqref{linearrat}. First, we consider an initial vector $\mathbf{u}_1 \in \BC^{nd+s}$ with $\|\mathbf{u}_1\|_2=1$ partitioned as in \eqref{divU} and then, we express that vector in a compact form:
\begin{equation*}
\mathbf{u}_1=\left[ \begin{array}{c} u_1^{(1)} \\ \vdots \\ u_1^{(d)} \\  v_1 \end{array} \right]= \left[ \begin{array}{c} Q_1R_1^{(1)} \\ \vdots \\ Q_1R_1^{(d)} \\  v_1 \end{array} \right]
\end{equation*}
where $Q_1 \in \BC^{n\times r_1}$ has orthonormal columns such that
\begin{equation*}
\mbox{span}\{Q_1\}=\mbox{span}\left\{u_1^{(1)},\cdots,u_1^{(d)}\right\}, \quad r_1=\mbox{rank}([u_1^{(1)}\cdots u_1^{(d)}]).
\end{equation*}
Observe that $r_1=1$ if and only if $\mathbf{u}_1$ is chosen to have collinear nonzero blocks $u_1^{(1)},\dots,u_1^{(d)}$. Now, taking into account the definition of $\mathbf{R}_j$ in \eqref{defQ.R}, after $j$ steps we want to expand $Q_j$ into $Q_{j+1}$ and $\mathbf{R}_j$ into $\mathbf{R}_{j+1}$, which results in the so-called two levels of orthogonalization.

{\bf First level of orthogonalization.} In Theorem \ref{th.Q} we have proved that we need to orthogonalize $u_{j+1}^{(d)}$ with respect to $Q_j$  to obtain the last orthonormal column of $Q_{j+1}$. In addition, it can be easily seen that
\begin{equation}\label{expandQ}
\mbox{span}\{Q_{j+1}\} = \mbox{span}\{Q_j,u_{j+1}^{(d)}\}=\mbox{span}\{Q_j,\hat{u}^{(d)}\},
\end{equation}
where $\hat{u}^{(d)}$ is the $d$-th block of size $n$ of the vector $\mathbf{\hat{u}}$ obtained by applying the shift-and-invert step to $\mathbf{u}_j$ (step 2 in Algorithm \ref{rat.krylov}) when $\mathbf{\hat{u}}$ is partitioned as in \eqref{divU}. Therefore, we only need to compute the block $\hat{u}^{(d)}$ of $\mathbf{\hat{u}}$ to compute $Q_{j+1}$. Thus, we can run Algorithm \ref{solver.system} with $\mathbf{w} = \mathbf{u}_j$ and $\mu = \theta_j$ until step 3, saving the resulting vector $t^{(1)} = \hat{u}^{(d)}$. It is important to observe that the first $d$ blocks of $\mathbf{u}_j$ have to be constructed, since the variables in R-CORK are $Q_j$ and $\mathbf{R}_j$, and $\mathbf{u}_j$ is not stored. As in CORK, they are computed as the single matrix-matrix product $Q_j \, [r_j^{(1)} \cdots r_j^{(d)}]$, where $r_j^{(1)} ,  \ldots , r_j^{(d)}$ are the first $d$ blocks of the last column $\mathbf{r}_j$ of $\mathbf{R}_j$, which is a very efficient computation in terms of cache utilisation on modern computers. Once $\hat{u}^{(d)}$ is available, by \eqref{expandQ} we can decompose
\begin{equation}\label{decomp.u}
\hat{u}^{(d)}=Q_jx_j+\alpha_j q_{j+1},
\end{equation}
where $q_{j+1}$ is a unit vector orthogonal to $Q_j$ and $x_j = Q_j^* \hat{u}^{(d)}$. Observe also that since $\hat{u}^{(d)}$ has been already computed, we can compute the last $s$ entries of $\mathbf{\hat{u}}$, denoted by $\hat{v}$, from step 6 in Algorithm \ref{solver.system} without the need of performing steps 4 and 5. The vector $\hat{v}$ will be used in the second level of orthogonalization.
Now, if $\hat{u}^{(d)}$ does not lie in the subspace spanned by the columns of $Q_j$, i.e., if $\hat{u}^{(d)}-Q_jx_j \ne 0$, we can expand $Q_j$ into $Q_{j+1}$ as follows:
\begin{equation*}
Q_{j+1}=[Q_j \quad q_{j+1}], \quad r_{j+1}=r_j+1.
\end{equation*}
On the other hand, if $\hat{u}^{(d)}$ lies in the subspace spanned by the columns of $Q_j$, we have $Q_{j+1}=Q_j$ and $r_{j+1}=r_j$.
We summarize the first level of orthogonalization in Algorithm \ref{first.level}. In step 2, if it is necessary, we can reorthogonalize $\tilde{q}$ to ensure orthogonality. In fact, in our  MATLAB code, we perform the classical Gram-Schmidt method twice.
\begin{algorithm}[H]
\caption{First level of orthogonalization in R-CORK}\label{first.level}
\begin{algorithmic}
\REQUIRE The matrix $Q_j\in\BC^{n\times r_j}$ and the vector $\hat{u}^{(d)}\in\BC^n$ (the $d$-th block of $\mathbf{\hat{u}}=(\AA - \theta_j \BB)^{-1}\BB\mathbf{u}_j$).
\ENSURE The matrix $Q_{j+1}\in\BC^{n\times r_{j+1}}$, the vector $x_j$, and the scalar $\alpha_j$.
\STATE Expanding $Q_j$ into $Q_{j+1}$.
\STATE 1. $x_j=Q_j^*\hat{u}^{(d)}$.
\STATE 2. $\tilde{q}=\hat{u}^{(d)}-Q_jx_j$.
\STATE 3. $\alpha_j=\|\tilde{q}\|_2$.
\IF {$\alpha_j \neq 0$}
\STATE 4a. $Q_{j+1}=[Q_j \quad \tilde{q}/ \alpha_j$].
\STATE 5a. $r_{j+1}=r_j+1$.
\ELSE
\STATE 4b. $Q_{j+1}=Q_j$.
\STATE 5b. $r_{j+1}=r_j$.
\ENDIF
\end{algorithmic}
\end{algorithm}

{\bf Second level of orthogonalization.} In Algorithm \ref{rat.krylov}, after choosing the shift and performing the shift-and-invert step, we need to compute the entries of the $j$-th column of $\underline{H}_j$ in step 3. Let us see how to do it efficiently in R-CORK. By using the compact representation of $\UU_j$ in \eqref{blocks} - \eqref{defQ.R}, we have
\begin{eqnarray}
h_j&=&\UU_j^*\mathbf{\hat{u}}, \nonumber \\
&=& (R_j^{(1)})^*Q_j^*\hat{u}^{(1)}+\cdots+(R_j^{(d)})^*Q_j^*\hat{u}^{(d)}+V_j^*\hat{v}, \label{hj}
\end{eqnarray}
where $\mathbf{\hat{u}}$ has been partitioned in an analogous way to \eqref{divU}.
Since $(\AA-\theta_j \BB)\mathbf{\hat{u}}=\BB\mathbf{u}_j$, and $\AA$ and $\BB$ have the structures in \eqref{matAB}, we obtain the following relation between the blocks of size $n$ of $\mathbf{\hat{u}}$ and the blocks of size $n$ of $\mathbf{u}_j$
\begin{equation}\label{rec.uhat}
\hat{u}^{(i-1)}=\theta_j\hat{u}^{(i)}+u^{(i)}_j, \quad \mbox{for }i=d,d-1,\dots,2.
\end{equation}
Motivated by \eqref{rec.uhat}, we consider the vectors $x_j\in\BC^{r_j}$ obtained in step 1 in Algorithm \ref{first.level} and $\hat{v}\in\BC^{s}$ obtained in step 6 in Algorithm \ref{solver.system} with $x^{(d)}=\hat{u}^{(d)}$ and $\mu = \theta_j$, and a vector $\mathbf{\hat{p}}\in\BC^{dr_j+s}$ partitioned as follows
\begin{equation} \label{eq.phatdef}
\mathbf{\hat{p}}=\left[ \begin{array}{c} \hat{p}^{(1)} \\ \vdots \\ \hat{p}^{(d)} \\  \hat{v} \end{array} \right], \quad \hat{p}^{(i)}\in\BC^{r_j}, \quad i=1,\dots,d,
\end{equation}
with the blocks defined by the recurrence relation
\begin{eqnarray} \label{eq.phatrecurrence}
\hat{p}^{(d)}&=&x_j, \nonumber \\
\hat{p}^{(i-1)}&=&\theta_j \hat{p}^{(i)} + r_j^{(i)}, \quad i=d,d-1,\dots,2, \label{phat}
\end{eqnarray}
where $r_{j}^{(i)}$ represents the $j$-th column of the block $R_j^{(i)}$ in \eqref{blocks}.  If $\alpha_j\neq 0$ in step 3 in Algorithm \ref{first.level}, by using $\mathbf{\hat{p}}$, the decomposition \eqref{decomp.u} and the recurrence relation \eqref{rec.uhat}, the vectors $\hat{u}^{(i)}$, $i=1,\dots,d$, corresponding to the partition of $\mathbf{\hat{u}}$ as in \eqref{divU} can be represented as follows
\begin{equation}\label{def.uhat}
\hat{u}^{(i)}=Q_{j+1}\left[ \begin{array}{c} \hat{p}^{(i)} \\ \theta_j^{d-i}\alpha_j\end{array} \right], \quad i=1, \dots, d,
\end{equation}
whereas that if $\alpha_j=0$, we can represent the blocks $\hat{u}^{(i)}$, $i=1,\dots,d$, as follows
\begin{equation}\label{def.uhat0}
\hat{u}^{(i)}=Q_j\hat{p}^{(i)}.
\end{equation}
Then, by using either \eqref{def.uhat} or \eqref{def.uhat0} (depending on the value of $\alpha_j$) in \eqref{hj} and recalling that the columns of $Q_{j+1}$ are orthonormal, we have
\begin{equation}\label{def.hj}
h_j=\left[\begin{array}{c} R_{j}^{(1)} \\ \vdots \\ R_{j}^{(d)}\\  V_j\end{array}\right]^{*} \left[\begin{array}{c} \hat{p}^{(1)} \\ \vdots \\ \hat{p}^{(d)}\\  \hat{v}\end{array}\right]=\mathbf{R}_j^*\mathbf{\hat{p}}.
\end{equation}
Thus, after computing $\mathbf{\hat{p}}$ with the recurrence relation \eqref{eq.phatrecurrence}, we can compute $h_j$ by performing a matrix-vector multiplication of size $dr_j+s$, which, according to \eqref{ineq.r}, is much smaller than $dn + s$ in large-scale problems and, even more, much smaller than $n$ whenever $s\ll n$ as often happens in applications \cite{visco,SuBai}.

Next, in step 4 of Algorithm \ref{rat.krylov}, we need to compute the vector $\mathbf{\tilde{u}}$, which means that in R-CORK we need its compact representation. By using the compact representation of $\mathbf{U}_j$ and \eqref{def.uhat}, we have, if $\alpha_j\neq 0$,
\begin{eqnarray*}
\mathbf{\tilde{u}}&=&\mathbf{\hat{u}}-\UU_jh_j,  \\
 &=& \left[ \begin{array}{c} Q_{j+1}\left[ \begin{array}{c} \hat{p}^{(1)} \\ \theta_j^{d-1}\alpha_j \end{array} \right]  \\ \vdots \\ Q_{j+1} \left[ \begin{array}{c} \hat{p}^{(d)} \\ \alpha_j \end{array} \right] \\  \hat{v}
 \end{array} \right] - \left[\begin{array}{c} Q_jR_j^{(1)} \\ \vdots \\ Q_jR_j^{(d)} \\ V_j\end{array} \right]h_j,
 \\
 &=& \left[ \begin{array}{cccc} Q_{j+1} & & & \\ & \ddots & & \\ & & Q_{j+1} & \\ & & & I_s \end{array}\right]
 \left[ \begin{array}{c} \left[\begin{array}{c}\hat{p}^{(1)} - R_j^{(1)}h_j \\ \theta_j^{d-1}\alpha_j \end{array}\right] \\ \vdots  \\\left[\begin{array}{c} \hat{p}^{(d)}-R_j^{(d)}h_j \\ \alpha_j \end{array}\right]\\ \hat{v}-V_jh_j \end{array} \right],
\end{eqnarray*}
and, in a similar way, if $\alpha_j = 0$ we obtain
\begin{equation*}
\mathbf{\tilde{u}}=\left[ \begin{array}{cccc} Q_{j} & & & \\ & \ddots & & \\ & & Q_{j} & \\ & & & I_s \end{array}\right]
 \left[ \begin{array}{c} \hat{p}^{(1)} - R_j^{(1)}h_j  \\ \vdots  \\ \hat{p}^{(d)}-R_j^{(d)}h_j \\ \hat{v}-V_jh_j \end{array} \right].
\end{equation*}
Defining
\begin{equation}\label{ptilde}
\mathbf{\tilde{p}}:=\left[\begin{array}{c} \tilde{p}^{(1)} \\ \vdots \\ \tilde{p}^{(d)} \\ \tilde{v} \end{array} \right], \quad
\tilde{p}^{(i)}:=\hat{p}^{(i)}-R_j^{(i)}h_j \in\BC^{r_j}, \,\, i=1,\dots,d, \quad
\tilde{v} := \hat{v}-V_jh_j \in \BC^{s},
\end{equation}
and taking into account that the columns of $Q_{j+1}$ and $Q_j$ are orthonormal, we can express the step 5 in Algorithm \ref{rat.krylov} as follows: if $\alpha_j \ne 0$, then
\begin{equation}\label{betaj}
h_{j+1,j}=\| \mathbf{\tilde{u}} \|_2= \left\| \left[ \begin{array}{c} \tilde{p}^{(1)} \\ \theta_j^{d-1} \alpha_j \\ \vdots \\  \tilde{p}^{(d)} \\  \alpha_j \\ \tilde{v} \end{array} \right] \right\|_2 \, \mbox{and } \mathbf{u}_{j+1}=\mathbf{Q}_{j+1}\cdot \dfrac{1}{h_{j+1,j}}\left[ \begin{array}{c} \tilde{p}^{(1)} \\ \theta_j^{d-1} \alpha_j \\ \vdots \\  \tilde{p}^{(d)} \\  \alpha_j \\ \tilde{v}  \end{array} \right],
\end{equation}
where the notation in \eqref{defQ.R} is used, while if $\alpha_j = 0$ we proceed as in \eqref{betaj} by removing all the entries involving $\alpha_j$ and with $\mathbf{Q}_{j+1} = \mathbf{Q}_{j}$.
From the previous equations, we can conclude that the first $d$ blocks of size $r_{j+1}$ of the last column of $\mathbf{R}_{j+1}$ in \eqref{defQ.R} are given if $\alpha_j \neq 0$ by
\begin{equation}\label{last.R}
r_{j+1}^{(i)}=\dfrac{1}{h_{j+1,j}}\left[\begin{array}{c} \tilde{p}^{(i)} \\ \theta_j^{d-i} \alpha_j  \end{array}\right], \quad i=1,\dots, d,
\end{equation}
and if $\alpha_j =0$ by
\begin{equation} \label{last.R2}
r_{j+1}^{(i)}= \dfrac{1}{h_{j+1,j}} \, \tilde{p}^{(i)}, \quad i=1,\dots, d.
\end{equation}
In addition, the last block of size $s$ of the last column of $\mathbf{R}_{j+1}$ is
\begin{equation}\label{v.j+1}
v_{j+1}=\dfrac{1}{h_{j+1,j}}\tilde{v}.
\end{equation}
Since $\mathbf{R}_j$ has orthonormal columns, from \eqref{def.hj} and the definitions in \eqref{eq.phatdef}-\eqref{phat} and \eqref{ptilde}, we have that $h_j$ and $\mathbf{\tilde{p}}$ satisfy
\begin{equation}
\mathbf{\tilde{p}}=\mathbf{\hat{p}}-\mathbf{R}_jh_j,
\end{equation}
where $\mathbf{\tilde{p}}$ is orthogonal to $\mathbf{R}_j$. This process is the Gram-Schmidt process without the normalization step, and it is summarized in Algorithm \ref{second.level}.
\begin{algorithm}
\caption{Second level of orthogonalization in R-CORK}\label{second.level}
\begin{algorithmic}
\REQUIRE The matrix $\mathbf{R}_j \in \BC^{(dr_j+s) \times j}$ and the vector $\mathbf{\hat{p}}\in \BC^{dr_j+s}$ from \eqref{eq.phatdef}-\eqref{phat}.
\ENSURE Vectors $h_j\in \BC^{j}$ and $\mathbf{\tilde{p}}\in\BC^{dr_j+s}$.
\STATE 1. $h_j=\mathbf{R}_j^*\mathbf{\hat{p}}$.
\STATE 2. $\mathbf{\tilde{p}}=\mathbf{\hat{p}}-\mathbf{R}_jh_j$.
\end{algorithmic}
\end{algorithm}
\begin{remark} In order to improve orthogonality, a reorthogonalization method can be included in Algorithm \ref{second.level}. In our MATLAB code, we use the classical Gram-Schmidt process twice.
\end{remark}
The whole procedure of this new method to solve large-scale and sparse rational eigenvalue problems requires the use of the two levels of orthogonalization described in this section, the first level to expand $Q_j$ into $Q_{j+1}$ and the second level to expand $\mathbf{R}_j$ into $\mathbf{R}_{j+1}$. The complete R-CORK method is summarized in Algorithm \ref{RCORK}. Note that R-CORK has as inputs the matrix $Q_1$ and the vector $\mathbf{R}_1$, which have to be computed. As in CORK \cite[p. 830]{cork}, there are two possible ways of computing these inputs: either starting with a random vector $\mathbf{u}_{1} \in \BC^{nd +s}$ and using an economy-size QR factorization, or emulating the structure of the eigenvectors \eqref{eigenv} of the linearization in \eqref{linearrat}. The details are very similar to the ones in \cite[p. 830]{cork} and are omitted. Recall in Algorithm \ref{RCORK} that $\mathbf{r}_{j}$ denotes the last column of the matrix $\mathbf{R}_{j}$ in \eqref{defQ.R}.

\begin{algorithm}[H]
\caption{Compact rational Krylov method for REP (R-CORK)}\label{RCORK}
\begin{algorithmic}
\REQUIRE ${Q}_1\in \BC^{n\times r_1}$, $\bfR_1\in \C^{(dr_1+s) \times 1}$ with $Q_{1}^*Q_1=I_{r_1}$ and $\bfR_1^*\bfR_1=1$.
\ENSURE Approximate eigenpairs $(\lambda,\mathbf{x})$ of $\AA-\lambda \BB$ with $\AA$ and $\BB$ as in \eqref{matAB}.\\
\FOR {$j=1,2,\dots$}
\STATE 1. Choose shift $\theta_j$.
\STATE 2. Compute $\mathbf{u}_{j}= \mathbf{Q}_{j}\mathbf{r}_{j}$, obtaining the first $d$ blocks as matrix-matrix product $Q_{j} \, [r_{j}^{(1)} \cdots r_{j}^{(d)}]$.
\STATE 3. Compute $\hat{u}^{(d)}$ by using Algorithm \ref{solver.system} until step 3 applied to $\mathbf{w}=\mathbf{u}_j$ and $\mu = \theta_j$.
\STATE 4. Compute $\hat{v}$ from step 6 in Algorithm \ref{solver.system}.
\STATE First level of orthogonalization
\STATE 5. Run Algorithm \ref{first.level} obtaining $Q_{j+1}$, the scalar $\alpha_j$ and the vector $x_j$.
\STATE Second level of orthogonalization:
\STATE 6. Compute $\mathbf{\hat{p}}$ in \eqref{eq.phatdef} via the recurrence relation in \eqref{phat}.
\STATE 7. Run Algorithm \ref{second.level} obtaining $\mathbf{\tilde{p}}$ and $h_j$.
\STATE 8. Compute $h_{j+1,j}$ and $\mathbf{r}_{j+1}$ using \eqref{betaj}-\eqref{last.R}-\eqref{last.R2}-\eqref{v.j+1} and get $\mathbf{R}_{j+1}$ with Lemma \ref{expand.R}.
\STATE 9. Compute eigenpairs: $(\lambda_i,t_i)$ of \eqref{solveHK} and test for convergence.
\ENDFOR
\STATE 10. Compute eigenvectors: $\mathbf{x}_i= \mathbf{Q}_{j+1}\bfR_{j+1}\underbar{$H$}_jt_i$.
\end{algorithmic}
\end{algorithm}


\subsection{Memory and computational costs}
In this section, we discuss the memory and the computational costs of R-CORK
and compare these costs with those of the classical rational Krylov (RK) method, i.e., Algorithm \ref{rat.krylov}, applied directly to the linearization $\cA - \la \cB$ of the REP in \eqref{linearrat}. In order to simplify the results we will take $r_j = j + d$ in R-CORK, which is the upper bound in Theorem \ref{th.rjRCORK} and that essentially corresponds to start the R-CORK iteration with $Q_1 \in \BC^{n \times d}$ ($r_1 = d$) or, equivalently, with a random initial vector $\mathbf{u}_1$ whose first $d$ blocks in the partition \eqref{divU} are linearly independent. If the first $d$ blocks of $\mathbf{u}_1$ are taken to be collinear, then one can take $r_j = j$ and to improve even more the costs of R-CORK. In addition, note that
we estimate the costs for any value of $s$, where $s \times s$ is the size of the lower-right block $C - \la D$ of $\cA - \la \cB$ appearing in the strictly proper part of the REP \eqref{newreprat}. In this way, it will be seen that even if $s \approx n$, R-CORK has considerable advantages with respect to RK in terms of memory and computational costs. However, we emphasize that such advantages are still much more relevant when $s\ll n$, as happens very often in applications \cite{visco,SuBai}.

For the memory costs, after $j$ iterations R-CORK stores $Q_j \in \BC^{n \times r_j}$ and $\mathbf{R}_j \in \BC^{(d r_j + s) \times j}$, which amounts to
$(n + dj)(j+d) + sj \approx n (j+d) + sj$ numbers. Note that the approximation
$n + dj \approx n$ holds in any reasonable large-scale REP. In contrast,
RK stores $\mathbf{U}_j$, which amounts to $(nd+s)j = nd j + sj$ numbers. Since, $(j+d) < dj$ for most reasonable choices of $j$ and degrees $d$ appearing in practice, we see that R-CORK is much more memory-efficient than RK. These memory costs are shown in Table \ref{costs}.

With respect to the computational costs, observe that for both R-CORK and RK the cost is the sum of (i) the shift-and-invert step and (ii) the orthogonalization steps. Let us analyze first the shift-and-invert steps. If the shift-and-invert step in RK, i.e., step 2 in Algorithm \ref{rat.krylov}, is performed by applying an unstructured solver to the $(nd +s) \times (nd +s)$ linear system $(\cA - \theta_j \cB) \mathbf{\hat{u}} = \cB \mathbf{u}_j$, then the cost of RK is much larger than the cost of R-CORK, since R-CORK solves this system with Algorithm \ref{solver.system} (removing step 4) which is much more efficient because requires the solution of smaller linear systems (essentially, see Remark \ref{rem.costlinsolver}, one of size $n\times n$ and two of size $s\times s$, which are very often extremely small). However, one can consider to perform the shift-and-invert step in RK with Algorithm \ref{solver.system}, but this is still somewhat more expensive than R-CORK, because for RK it is needed to perform step 4 of Algorithm \ref{solver.system}, with an additional cost of $2n(d-2)$ flops in each iteration (see Remark \ref{rem.costlinsolver}). A final important remark on the shift-and-invert step is that R-CORK involves the overhead cost of constructing $\mathbf{u}_j$ in step 2 of Algorithm \ref{RCORK}, which in RK is not needed. However, note that, as explained in previous sections, this construction can be performed as in CORK via a single matrix-matrix product, which allows for optimal efficiency and cache utilisation on modern computers \cite[p. 577]{Kressner}. Moreover, we emphasize that a traditional construction of $\mathbf{u}_j$ in R-CORK costs $\cO(dn r_j) = \cO(dn (j+d))\approx \cO(dnj)$ flops at iteration $j$, which added to the orthogonalization cost of R-CORK discussed below would give a cost of the same order of the orthogonalization cost of RK.

Finally, we discuss the orthogonalization costs of RK and R-CORK. In RK, the orthogonalization is performed in steps 3-4-5 of Algorithm \ref{rat.krylov} and its cost is well-known to be $\cO(j(nd+s)) = \cO(jnd+js)$ flops at iteration $j$, which amounts to $\cO(j^2 n d+j^2 s)$ flops in the first $j$ iterations (see Table \ref{costs}). In R-CORK, the orthogonalization is performed in steps 5-6-7-8 of Algorithm \ref{RCORK}. At iteration $j$, the cost of step 5 is $\cO(r_j n) = \cO((j+d) n)$ flops, the cost of step 6 is $\cO(r_j d) = \cO((j+d) d)$ flops, which is negligible with respect to the cost of step 5, the cost of step 7 is $\cO(j (dr_j +s)) = \cO(j d (j+d) + j s)$ flops, and the cost of step 8 is $\cO(dr_j +s) = \cO(d (j+d) +s)$ flops. Therefore, the total cost at iteration $j$ of the orthogonalization in R-CORK is $\cO((n + j d) (j+d) + j s) \approx \cO(n  (j+d) + j s)$, where we have used again the approximation $n + j d \approx n$, which gives $\cO(j^2 n + j d n  + j^2 s)$ flops in the first $j$ iterations (see Table \ref{costs}). Observe that the orthogonalization cost of RK includes the large term $j^2 n d$ which is not present in the cost of R-CORK. Therefore, the orthogonalization cost of R-CORK is considerably smaller than the one of RK.

In Table \ref{costs}, we summarize the comparison of the costs between R-CORK and RK.

\begin{table}[H]
\begin{center}
\begin{tabular}{c|c|c}
& Classical rational Krylov method & R-CORK method\\ \hline
Orthogonalization cost & $\cO(j^2 n d + j^2 s)$ & $\cO(j^2 n + j d n  + j^2 s)$  \\ \hline
Memory cost & $ nd j+sj$ & $ n(j+d)+sj$
\end{tabular}
\end{center}
\caption{Orthogonalization and memory costs for classical rational Krylov method and R-CORK method after $j$ iterations.}\label{costs}
\end{table}

\section{Implicit restarting in R-CORK} \label{sect.implicit}
Practical implementations of any Krylov-type method for computing eigenvalues of large-scale problems require effective restarting strategies. The goal of this section is to develop an implicit restarting strategy for R-CORK that restarts both $Q_j$ and $\mathbf{R}_j$ in the compact representation of $\mathbf{U}_j$ in \eqref{blocks}-\eqref{defQ.R}. Since R-CORK shares many of the properties of CORK, the results of this section are similar to those in \cite[Section 6]{cork}, which in turn are based on implicit restarting procedures for classical rational Krylov methods \cite{implicit} and on the Krylov-Schur restart developed for TOAR in \cite[Section 4.2]{Kressner}.

Following the Krylov-Schur spirit \cite{stewart-paper-restart} (see also \cite[Section 5.2]{stewart-book}), the restarting technique we propose transforms  first the matrices $\underline{H}_j$ and $\underline{K}_j$ in \eqref{cork.relation} to (quasi)triangular form, in order to reorder the Ritz values and to preserve the desired ones with a rational Krylov subspace of smaller dimension. Second, by  representing the new smaller Krylov subspace in its compact form in an efficient way, the implicit restart of R-CORK is completed. The main difference of the process described below with respect to the implicit restarting in \cite[Section 6]{cork} is that here we need to add a new block of size $s\times s$ corresponding to the rational part of $R(\la)$ in \eqref{newreprat}.

Suppose that after $j$ iterations, we have the rational Krylov recurrence relation in its compact form as in \eqref{cork.relation}
\begin{equation}\label{full.rel}
\AA \mathbf{Q}_{j+1}\mathbf{R}_{j+1}\underline{H}_j = \BB \mathbf{Q}_{j+1}\mathbf{R}_{j+1}\underline{K}_j,
\end{equation}
and we want to reduce this representation to a smaller compact rational decomposition of size $p$, $p<j$, this is
\begin{equation*}
\AA \mathbf{Q}_{p+1}^+ \mathbf{R}_{p+1}^+ \underline{H}_p^+ = \BB \mathbf{Q}_{p+1}^+ \mathbf{R}_{p+1}^+ \underline{K}_p^+, \quad p<j.
\end{equation*}
For this purpose, we consider the generalized Schur decomposition:
\begin{eqnarray}
H_j&=&\left[ \begin{array}{cc} Y_p & Y_{j-p} \end{array}\right]\left[ \begin{array}{cc} T_{p\times p}^{(H)} & * \\ 0 & T^{(H)}_{(j-p)\times (j-p)}\end{array}\right] \left[\begin{array}{c} Z_p^* \\ Z_{j-p}^*\end{array}\right], \label{schur.H} \\
K_j&=&\left[ \begin{array}{cc} Y_p & Y_{j-p} \end{array}\right]\left[ \begin{array}{cc} T^{(K)}_{p\times p} & * \\ 0 & T^{(K)}_{(j-p)\times (j-p)}\end{array}\right] \left[\begin{array}{c} Z_p^* \\ Z_{j-p}^*\end{array}\right], \label{schur.K}
\end{eqnarray}
where $H_j$ and $K_j$ are the $j \times j$ upper Hessenberg matrices obtained by removing the last row of $\underline{H}_j$ and $\underline{K}_j$ respectively, $Y:=\left[ \begin{array}{cc} Y_p & Y_{j-p} \end{array}\right]$, $Z:=\left[ \begin{array}{cc} Z_p & Z_{j-p} \end{array}\right] \in\BC^{j\times j}$ are unitary matrices with $Y_p$, $Z_p \in \BC^{j\times p}$, $Y_{j-p}$, $Z_{j-p}\in\BC^{j\times (j-p)}$
and $T^{(H)}:=\left[ \begin{array}{cc} T_{p\times p}^{(H)} & * \\ 0 & T^{(H)}_{(j-p)\times(j-p)}\end{array}\right]$, $T^{(K)}:=\left[ \begin{array}{cc} T_{p\times p}^{(K)} & * \\ 0 & T^{(K)}_{(j-p)\times (j-p)}\end{array}\right] \in\BC^{j\times j}$ are upper (quasi)triangular matrices with $T_{p\times p}^{(H)}$, $T_{p\times p}^{(K)}\in\BC^{p\times p}$ and $T^{(H)}_{(j-p)\times (j-p)}$, $T^{(K)}_{(j-p)\times (j-p)}\in \BC^{(j-p)\times (j-p)}$. The $p < j$ Ritz values of interest are the eigenvalues of the pencil $T_{p\times p}^{(K)} - \la T_{p\times p}^{(H)}$.
By multiplying by $Z$ on the right the recurrence relation \eqref{full.rel} and using \eqref{schur.H} and \eqref{schur.K}, and considering the first $p$ columns, we obtain:
\begin{equation}\label{rel.reduced}
\AA \mathbf{Q}_{j+1}\mathbf{R}_{j+1}\left[\begin{array}{cc} Y_{p} & 0 \\ 0 & 1\end{array}\right]\left[\begin{array}{c} T_{p\times p}^{(H)} \\ h_{j+1,j}\tilde{z}^*\end{array}\right]= \BB \mathbf{Q}_{j+1}\mathbf{R}_{j+1}\left[\begin{array}{cc} Y_p & 0 \\ 0 & 1\end{array}\right]\left[\begin{array}{c} T_{p\times p}^{(K)} \\ k_{j+1,j}\tilde{z}^*\end{array}\right],
\end{equation}
where $\tilde{z}^*$ represents the first $p$ entries of the last row of $Z$.
By introducing the notation:
\begin{equation}\label{not.Y}
Y_1:=\left[\begin{array}{cc} Y_p & 0 \\ 0 & 1\end{array}\right]\in\BC^{(j+1)\times (p+1)} , \quad \underline{H}^+_p:=\left[\begin{array}{c} T_{p\times p}^{(H)} \\ h_{j+1,j}\tilde{z}^*\end{array}\right], \quad \underline{K}^+_p=\left[\begin{array}{c} T_{p\times p}^{(K)} \\ k_{j+1,j}\tilde{z}^*\end{array}\right] \in \BC^{(p+1)\times p},
\end{equation}
and defining $\mathbf{W}_{p+1}=\mathbf{R}_{j+1}Y_1$, we obtain
\begin{equation} \label{eq.auxxrestart}
\AA \mathbf{Q}_{j+1}\mathbf{W}_{p+1}\underline{H}^+_p = \BB \mathbf{Q}_{j+1}\mathbf{W}_{p+1}\underline{K}^+_p.
\end{equation}
Note that with this transformation, we reduce the size of the matrices $\underline{H}^+_p, \underline{K}^+_p$, and $\mathbf{W}_{p+1}$ with respect to $\underline{H}_j, \underline{K}_j$, and $\mathbf{R}_{j+1}$, and remove the Ritz values that are not of interest. However, observe that the large factor $\mathbf{Q}_{j+1}$ remains unchanged. In order to reduce the size of $\mathbf{Q}_{j+1}$, consider
\begin{equation*}
\mathbf{W}_{p+1} = \left[\begin{array}{c} W_{p+1}^{(1)} \\ \vdots \\ W_{p+1}^{(d)} \\ V_{j+1}Y_1 \end{array} \right], \quad W_{p+1}^{(i)} \in \BC^{r_{j+1}\times(p+1)}, \quad i=1,\dots,d,
\end{equation*}
and let $\omega$ be the rank of $[W_{p+1}^{(1)} \quad \cdots \quad W_{p+1}^{(d)}]$. The key observation is that although the matrices $\underline{H}^+_p, \underline{K}^+_p$ are no longer in Hessenberg form, the subspace spanned by the columns of $(\mathbf{Q}_{j+1}\mathbf{W}_{p+1})$ is still a rational Krylov subspace corresponding to $\cA - \la \cB$ \cite{implicit}. Therefore, we can apply Theorem \ref{th.rjRCORK} to $\mbox{span} \left\{ Q_{j+1} W_{p+1}^{(1)}, \ldots , Q_{j+1} W_{p+1}^{(d)} \right\} = Q_{j+1} \, \mbox{span} \left\{ W_{p+1}^{(1)}, \ldots , W_{p+1}^{(d)} \right\}$ to obtain that $\omega \leq d + p < d +j$. Then, we compute the economy singular value decomposition of:
\begin{eqnarray*}
[W_{p+1}^{(1)} \quad \cdots \quad W_{p+1}^{(d)}]&=&\cU\cS[\cV^{(1)} \quad \cdots \quad \cV^{(d)}],
\end{eqnarray*}
where $\cU \in \BC^{r_{j+1}\times \omega}$, $\cS \in \BC^{\omega\times \omega}$ and $\cV^{(i)}\in\BC^{\omega \times (p+1)}$ for $i=1,\dots,d$. Thus, by defining
\begin{equation*}
Q^+_{p+1}=Q_{j+1}\cU,  \quad \mathbf{R}_{p+1}^+=\left[ \begin{array}{c} \cS\cV^{(1)} \\ \vdots \\ \cS\cV^{(d)} \\ V_{j+1}Y_1 \end{array} \right], \quad \mathbf{Q}_{p+1}^+=\left[ \begin{array}{cccc} Q^+_{p+1} & & & \\ & \ddots & & \\ & & Q^+_{p+1} & \\ & & & I_s \end{array}\right],
\end{equation*}
we get from \eqref{eq.auxxrestart} the compact rational Krylov recurrence relation
\begin{equation} \label{eq.lastrestart}
\AA \mathbf{Q}_{p+1}^+ \mathbf{R}_{p+1}^+ \underline{H}^+_p= \BB \mathbf{Q}_{p+1}^+ \mathbf{R}_{p+1}^+ \underline{K}^+_p,
\end{equation}
with $p<j$. It is important to emphasize again that the matrices $\underline{H}^+_p$ and $ \underline{K}^+_p$ are no longer upper Hessenberg matrices, however, they contain the required Ritz values and the columns of $\mathbf{Q}_{p+1}^+ \mathbf{R}_{p+1}^+$ span a corresponding rational Krylov subspace. We continue the process by expanding \eqref{eq.lastrestart} with Algorithm \ref{RCORK} until we get a rational Krylov subspace of dimension $j$. The matrices $\underline{H}^+_j$ and $ \underline{K}^+_j$ obtained in this expansion are not in Hessenberg form, although their columns $p+1, \ldots , j$ have a Hessenberg structure (see \cite[p. 329]{stewart-book}). Then, the restarting process described in this section is applied again to get a new compact relation \eqref{eq.lastrestart} of ``size $p$''. This expansion-restarting procedure is cyclicly repeated until the prescribed stopping criterion is satisfied for a certain desired number, less than or equal to $p$, Ritz pairs.

\section{Numerical experiments} \label{sec.numexper}

In this section, we present two large-scale and sparse numerical examples to illustrate the efficiency of the R-CORK method. All reported experiments were performed using Matlab R2013a on a PC with a 2,2 GHz Intel (R) Core (TM) i7 processor, with 16 GB of RAM and DDR3 memory type, and with operating system macOS Sierra, version 10.12.1.

By following \cite[Section 8]{cork}, in the numerical experiments we plot the residuals at each iteration, with and without restarts, obtained by using the R-CORK method, the dimension of the subspace at each iteration for R-CORK, and the comparison of the memory storages of R-CORK and of the classical rational Krylov method applied directly to the linearization \eqref{linearrat}. We
also report on the number of iterations until convergence.

Inspired by the applications in \cite[Section 4]{SuBai}, we construct numerical experiments with prescribed eigenvalues and poles of a rational matrix $R(\lambda)$ represented as in \eqref{newreprat}. In order to measure the convergence of an approximate eigenpair $(\lambda, x)$ of $R(\la)$, we consider the relative norm residual:
\begin{equation}\label{res.ex}
E(\lambda,x)=\dfrac{\| R(\lambda)x\|_2}{(\sum_{i=0}^d{| \lambda |^{i}\| P_i\|_F}+\|E(C-\lambda D)^{-1}F^T\|_F)\| x\|_2}.
\end{equation}
Observe that the computation of $E(\lambda,x)$ involves matrices and vectors of size $n$ and, so, is expensive. Therefore, in actual practice, we recommend to test first the convergence through a cheap estimation of the residual of the linearized problem, i.e., $\|(\cA - \la \cB) \mathbf{z}\|_2$, involving only the small projected problem \eqref{solveHK}, and once such residual is sufficiently small to compute the residual \eqref{res.ex} every $q>1$ iterations instead of at each iteration. However, in our examples, we performed the computation of $E(\lambda,x)$ at each iteration for the purpose of illustration.

The computation of \eqref{res.ex} deserves some comments. Note first that it requires to recover the approximated eigenvector $x$ of $R(\la)$ from the approximated eigenvector $\mathbf{z}$ of the linearization $\cA - \la \cB$ in \eqref{linearrat} computed in step 10 of Algorithm \ref{RCORK}. This recovery, according to the first equation in \eqref{eigenv}, can be done by taking any of the first $d$ blocks of $\mathbf{z}$ if $\la \ne 0$. Since in our numerical examples the moduli of the approximate eigenvalues are larger than $1$, we have chosen the first block of $\mathbf{z}$ as approximate $x$. However, we recommend to choose the $d$th block if the moduli of the approximate eigenvalues are smaller than $1$. The calculation of the quantities $\|P_i\|_F$, $i=0,\dots,d$ needs to be performed only once and it is inexpensive since the matrices $P_i$ are sparse in practice. Finally, to compute the expression $\|E(C-\lambda D)^{-1}F^T\|_F$ on the denominator in \eqref{res.ex}, we use
\begin{eqnarray*}
\|E(C-\lambda D)^{-1}F^T\|_F^2&=&\mbox{trace}((E(C-\lambda D)^{-1}F^T)^*E(C-\lambda D)^{-1}F^T), \\ &=&\mbox{trace}((E^*E)(C-\lambda D)^{-1}(F^T \bar F)(C-\lambda D)^{-*}),
\end{eqnarray*}
which only involves the matrices $E^*E$, $F^T \bar F$, $(C-\lambda D)^{-1}$ and $(C-\lambda D)^{-*}$ of size $s\times s$. Since in many application $s\ll n$, this computation is usually inexpensive.

\begin{numexamp}\label{numexamp1}
We construct a REP of the type arising from the free vibrations of a structure if one uses a viscoelastic constitutive relation to describe the behavior of a material \cite{visco,SuBai}. The REPs of this type have the following structure:
\begin{equation}\label{exp1}
R(\lambda)x=\left(\lambda^2M+K-\sum_{i=1}^k{\dfrac{1}{1+b_i\lambda}\Delta G_i}\right)x=0,
\end{equation}
where the mass and stiffness matrices $M$ and $K$ are real symmetric and positive definite, $b_j$ are relaxation parameters over $k$ regions, and $\Delta G_j$ is an assemblage of element stiffness matrices over the region with the distinct relaxation parameters.  As in \cite{SuBai}, we consider the case where $\Delta G_i=E_iE_i^T$ and $E_i\in\BR^{n\times s_i}$. By defining
\begin{equation*}
E=[E_1, E_2, \dots, E_k], \quad D=\mbox{diag}(b_1I_{s_1},b_2I_{s_2},\dots,b_kI_{s_k}),
\end{equation*}
the REP \eqref{exp1} can be written in the form \eqref{newreprat}:
\begin{equation*}
(\lambda^2M+K-E(I+\lambda D)^{-1}E^T)x=0.
\end{equation*}
In our particular example, we consider the case with one region and one relaxation parameter $b_1=-1$. The construction of the matrices $M$ and $K$ in our example proceeds as follows: construct first $R_1(\lambda)=\lambda^2A_2+A_0-e_{10000}(1-\lambda)^{-1}(e_{10000})^T$, with $A_2,A_0\in\mathbb{R}^{10000\times 10000}$ diagonal and positive definite matrices and $e_{10000}$ the last column of $I_{10000}$. This structure allows to prescribe easily the eigenvalues for $R_1 (\lambda)$. Then, we consider the following invertible tridiagonal matrix $P$
\begin{equation*}
P=\left[ \begin{array}{cccc} 1 & \frac{1}{2} & & \\ \frac{1}{3} & 1 & \ddots & \\ & \ddots & \ddots & \frac{1}{2} \\ & & \frac{1}{3} & 1 \end{array}\right],
\end{equation*}
and finally construct $R(\lambda)=PR_1(\lambda)P^T$. Since $P$ is invertible, the eigenvalues of $R(\lambda)$ and $R_1(\lambda)$ are the same. By using this procedure, we have constructed the REP
\begin{equation}\label{re.exp1}
R(\lambda)\, x=\left(\lambda^2M+K-p_{10000}(1-\lambda)^{-1}(p_{10000})^T\right) \, x=0,
\end{equation}
where $M:=PA_2P^T$, $K:=PA_0P^T\in\mathbb{R}^{10000\times 10000}$ are symmetric, positive definite, and pentadiagonal matrices, and $p_{10000}\in\mathbb{R}^{10000}$ represents the last column of the matrix $P$.

In this example we are interested in computing the $20$ eigenvalues of \eqref{re.exp1} with negative imaginary part and with largest absolute value of the negative imaginary part.  To aim our goal, we use $3$ cyclically repeated shifts in the rational Krylov steps and a random unit real vector as an initial vector. The reader can see the approximate eigenvalues computed by R-CORK and the chosen shifts in Figure \ref{numexamp1}(a). We first solve the REP \eqref{re.exp1} by using Algorithm \ref{RCORK} without restart, and after $85$ iterations, we find the required eigenvalues with a tolerance \eqref{res.ex} of $10^{-10}$. The convergence history is shown in Figure \ref{numexamp1}(b). In Figure \ref{numexamp1}(d), we plot  $r_j$, the rank of $Q_j$ at the iteration $j$, and $j$, the dimension of the Krylov subspace. Since we did not perform restart, we can see that both, $r_j$ and $j$ increases with the iteration count $j$ and that $r_j = j +1$, as expected since the degree of the polynomial part of \eqref{re.exp1} is $d=2$. Figure \ref{numexamp1}(f) displays the comparison between the cost of memory storage of both the R-CORK method, by using Algorithm \ref{RCORK}, and the classical rational Krylov method, by using Algorithm \ref{rat.krylov}. From this figure, we can see that the R-CORK method requires approximately half of the memory storage that the classical rational Krylov method, which is consistent with the degree $2$ of the polynomial part of \eqref{re.exp1}.

Next, we apply  Algorithm \ref{RCORK} to the REP \eqref{re.exp1} combined with the implicit restarting introduced in Section \ref{sect.implicit}. We choose the maximum dimension of the subspace $m=45$, which is reduced after each restart to dimension $p=30$ to compute the $20$ required eigenvalues. The convergence history of the eigenpairs $(\lambda,x)$ computed by this restarted R-CORK method is shown in Figure \ref{numexamp1}(c). After $3$ restarts and $81$ iterations, the $20$ required eigenvalues have been found with a tolerance \eqref{res.ex} of $10^{-10}$. In Figure \ref{numexamp1}(e) the reader can see the rank of $Q_j$ at the $j$-th iteration and it can be seen that with restart, the relation between $j$ and $r_j$ continues the same. Finally, in Figure \ref{numexamp1}(g) we plot the memory storage for R-CORK and classical rational Krylov, and it can be observed that the memory cost for the R-CORK method is a factor close to 2 smaller than the memory cost obtained by the classical rational Krylov method.
\end{numexamp}

\begin{figure}
\begin{center}
\subfigure[Eigenvalues.]{\includegraphics[scale=0.23]{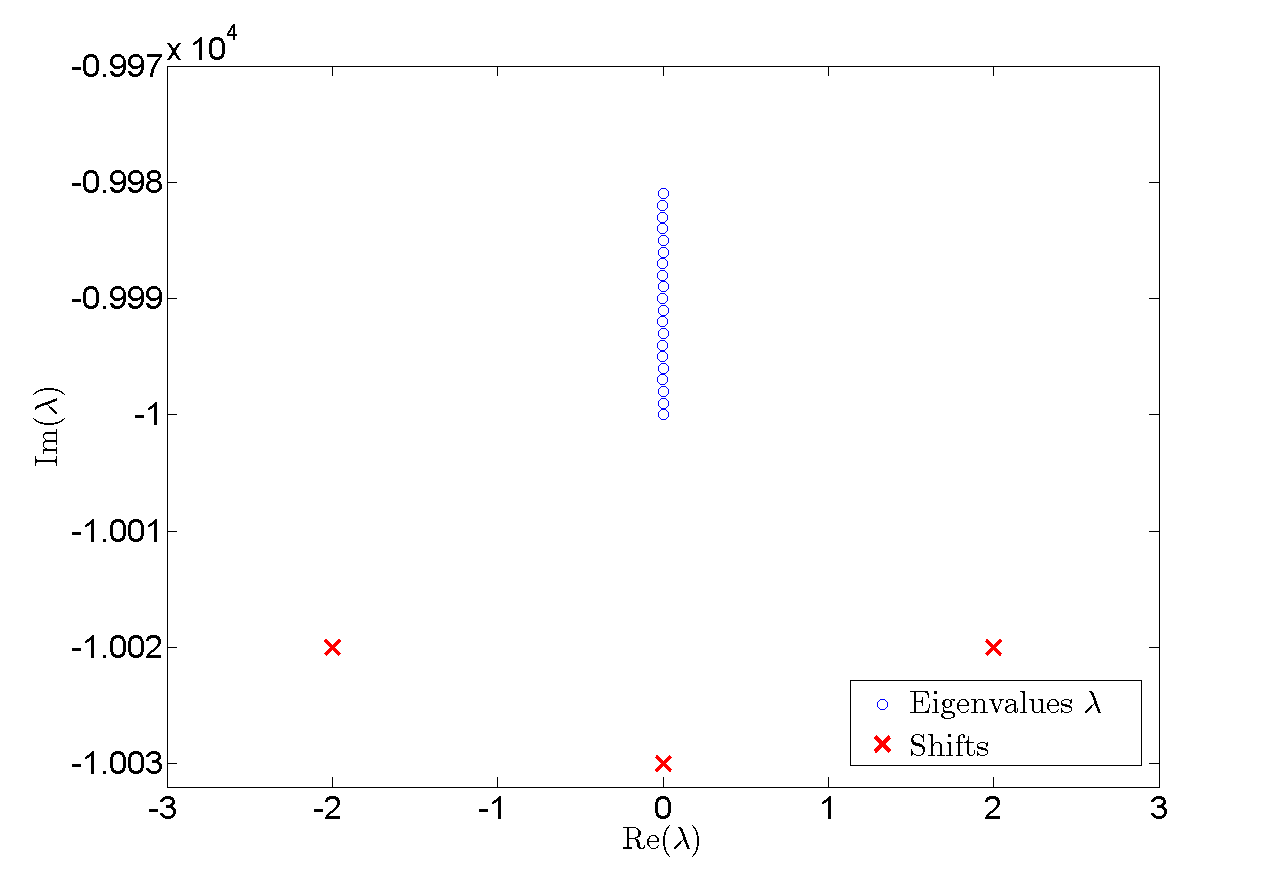}}
\end{center}\vspace{-0.5 cm}
\centering
\subfigure[Convergence history without restart.]{\includegraphics[scale=0.23]{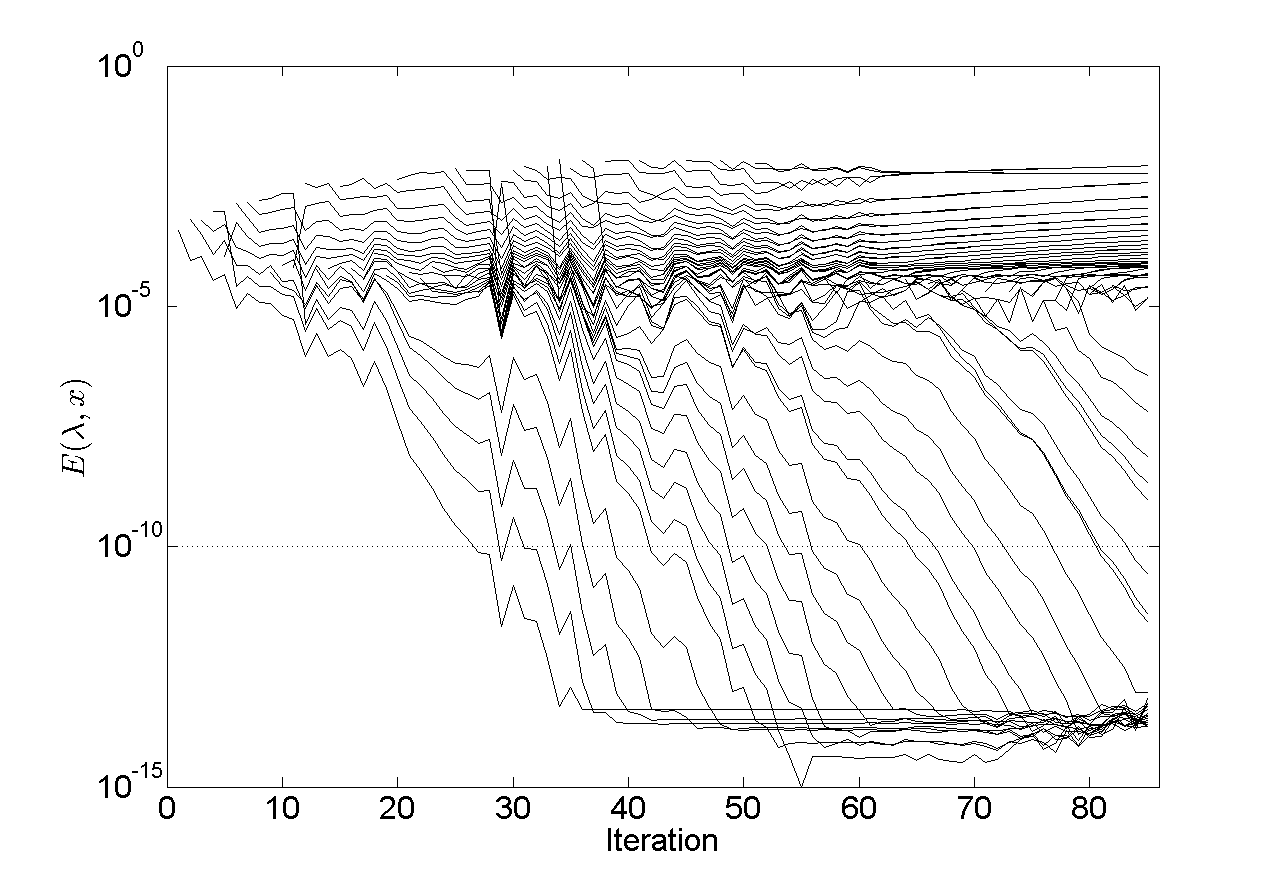}}\vspace{-0.1cm}
\subfigure[Convergence history with restart.]{\includegraphics[scale=0.23]{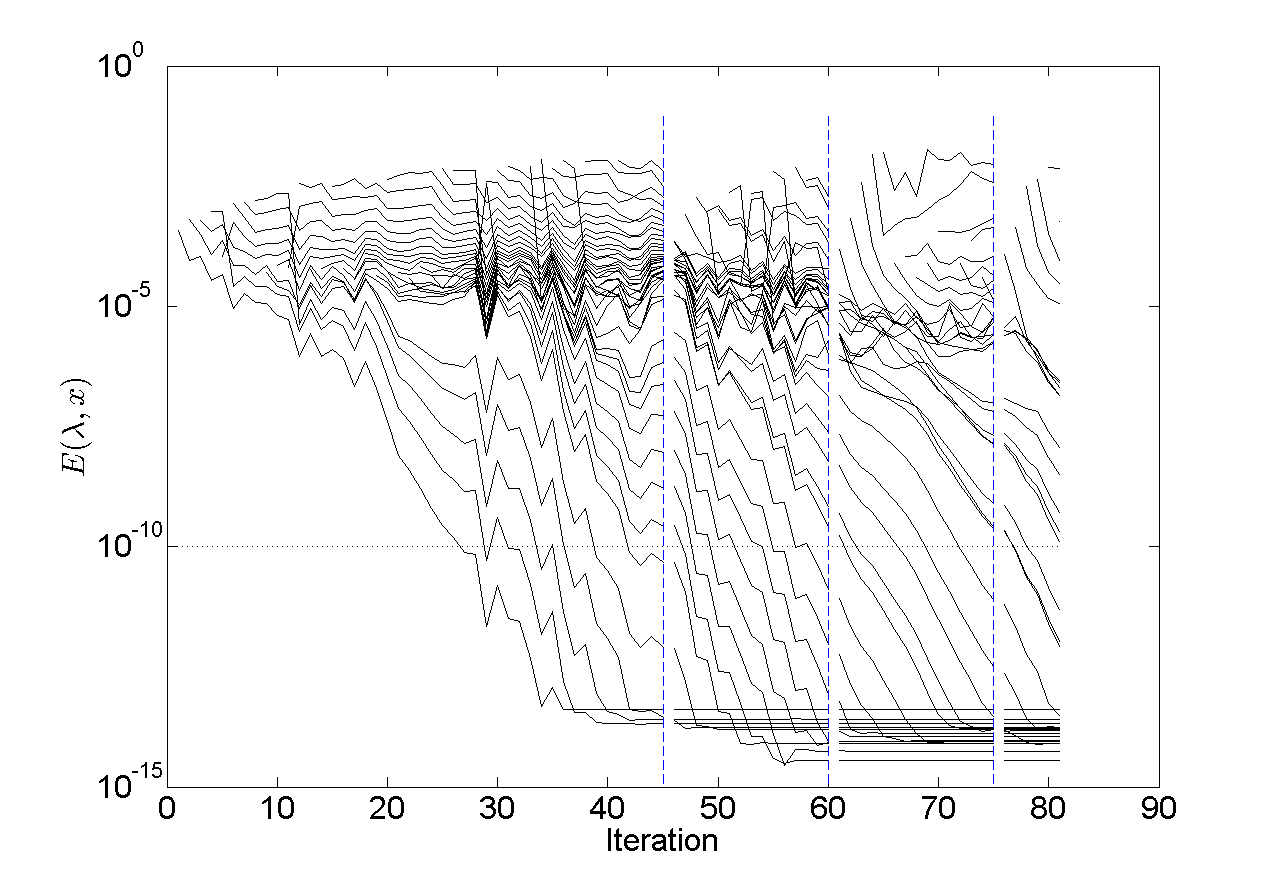}}\vspace{-0.1cm}
\subfigure[Dimension of the subspace without restart.]{{\includegraphics[scale=0.23]{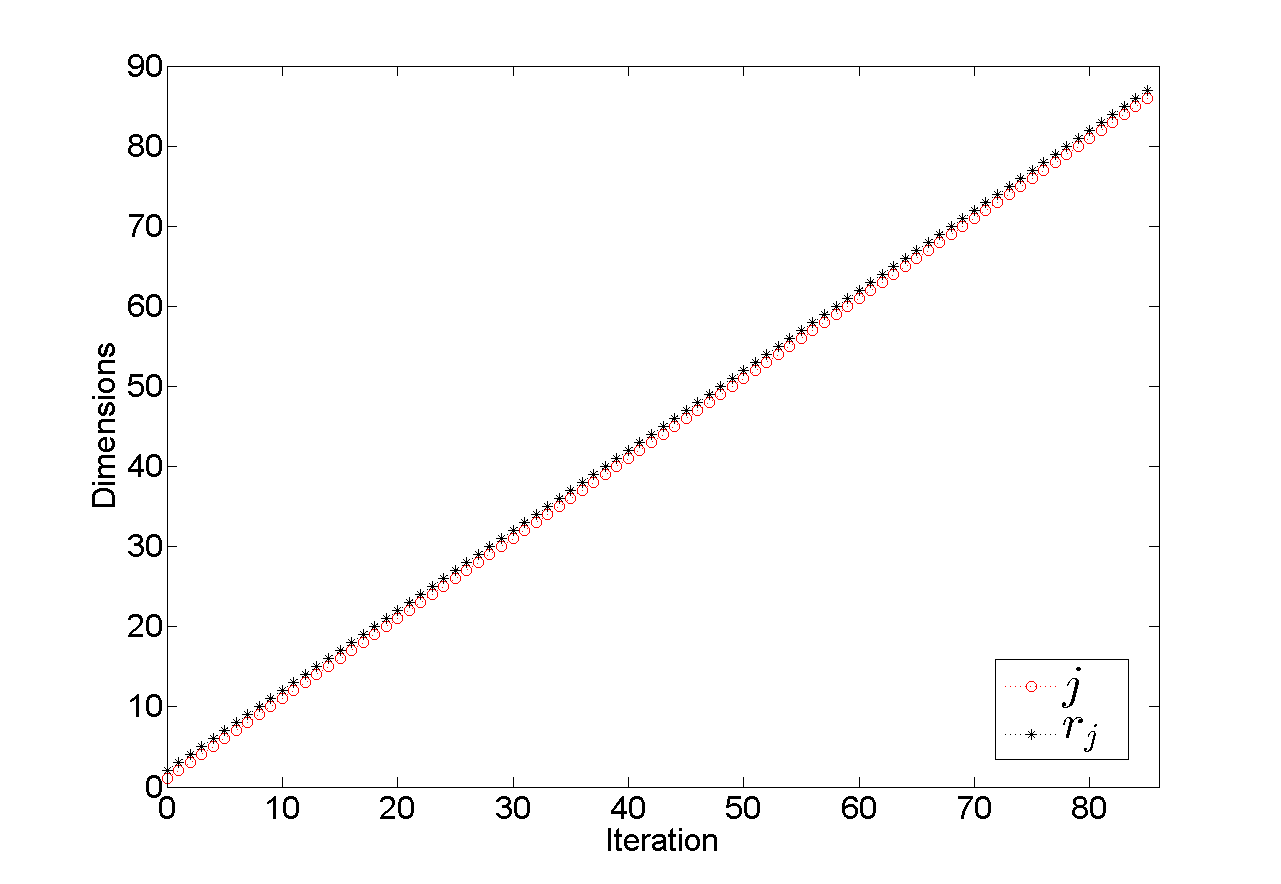}}}\vspace{-0.1cm}
\subfigure[Dimension of the subspace with restart.]{{\includegraphics[scale=0.23]{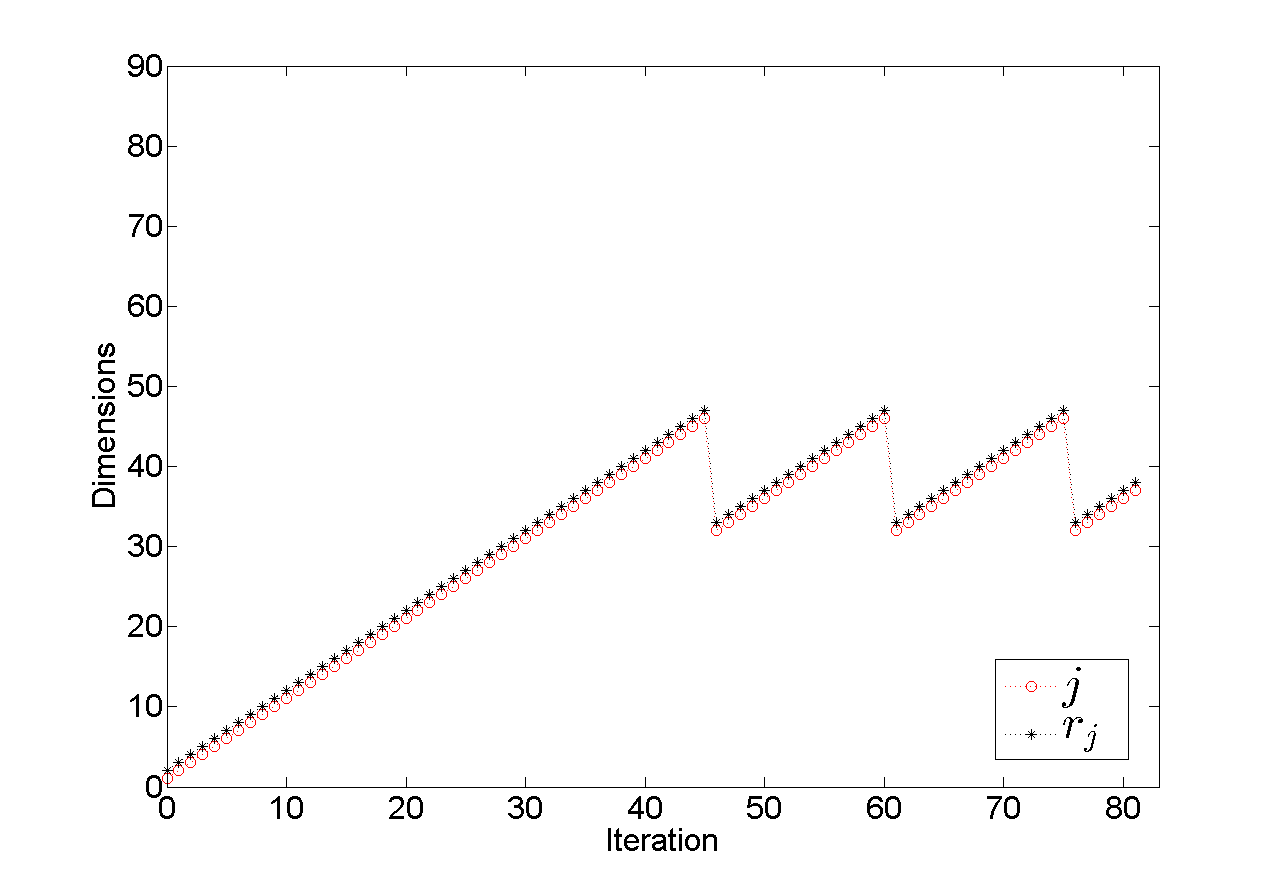}}}\vspace{-0.1cm}
\subfigure[Memory cost without restart.]{{\includegraphics[scale=0.23]{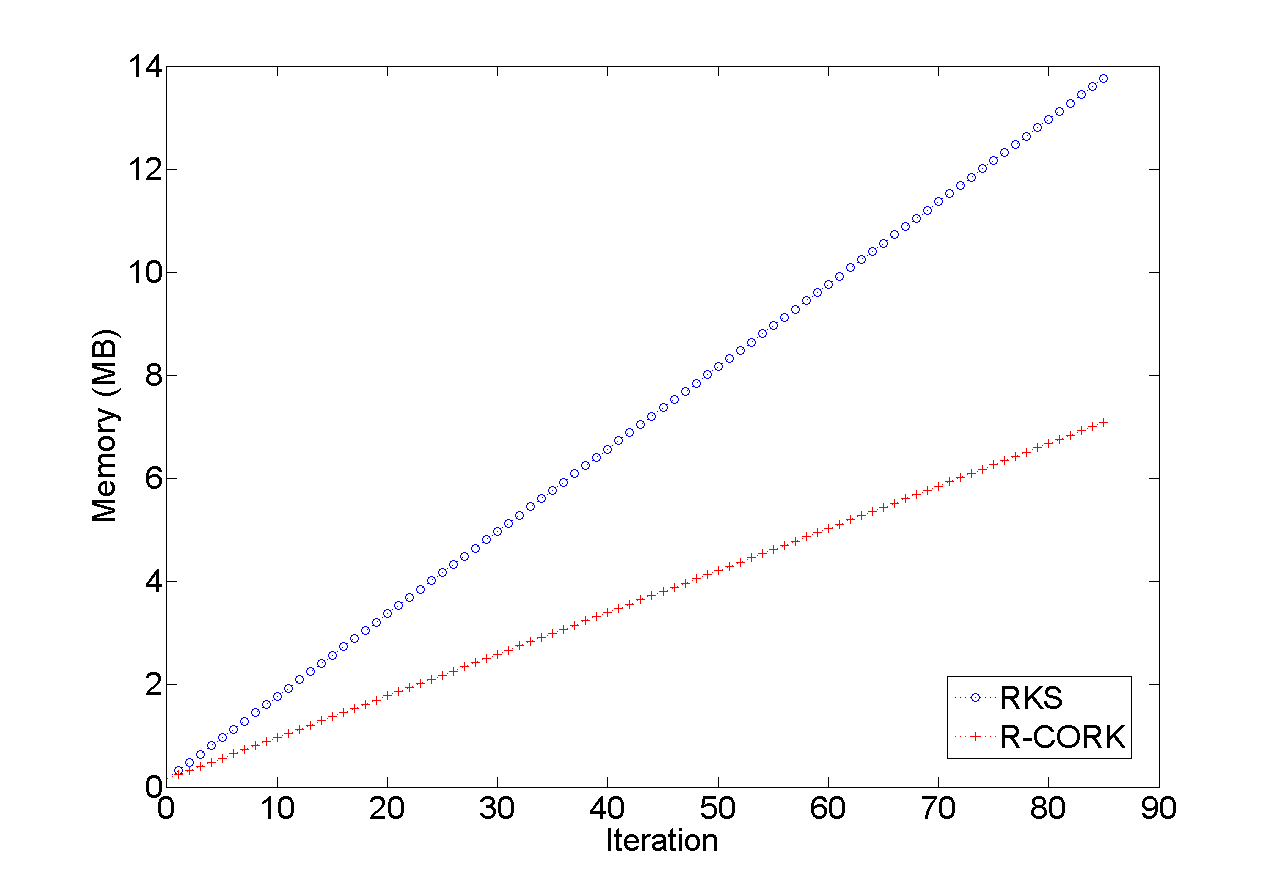}}}\vspace{-0.1cm}
\subfigure[Memory cost with restart.]{{\includegraphics[scale=0.23]{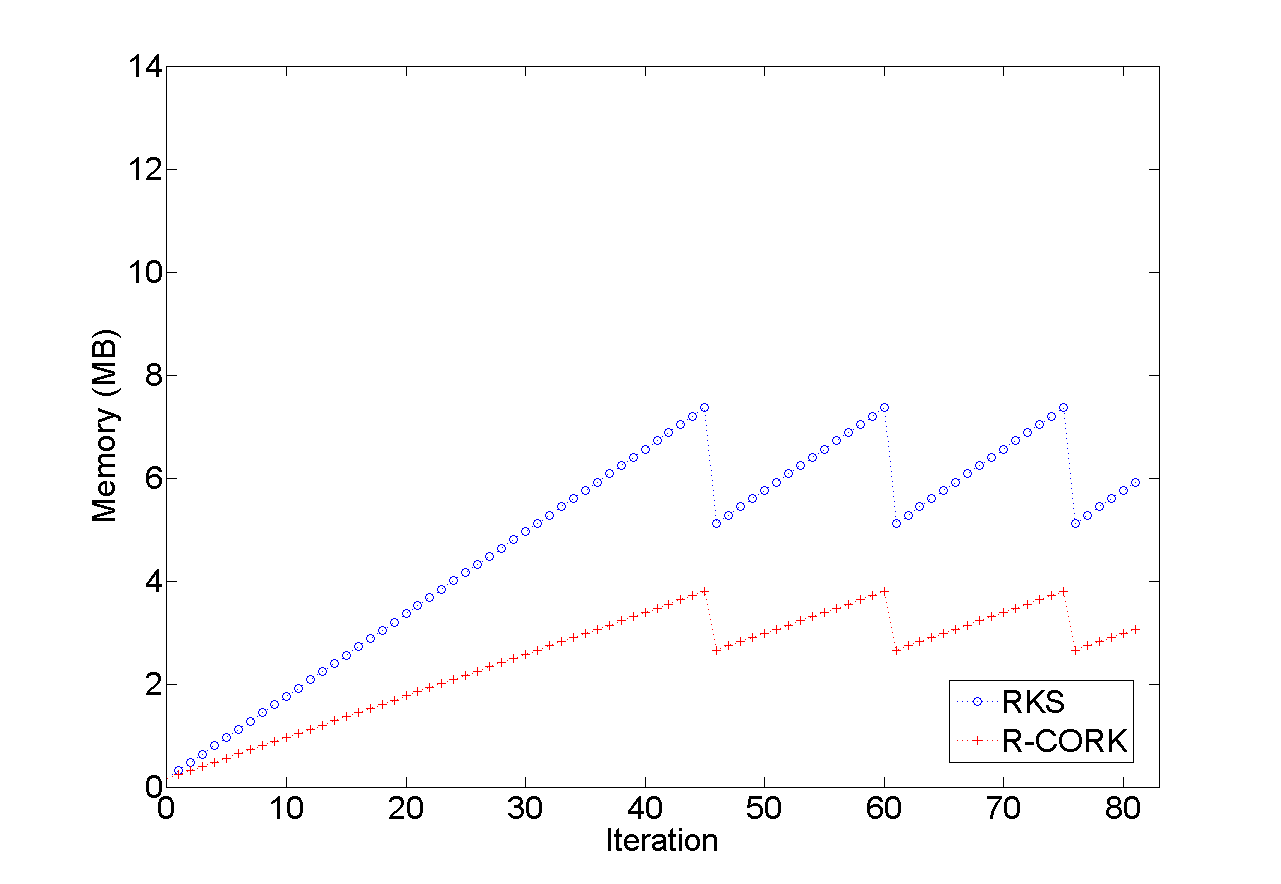}}}\\
\caption{Numerical experiment 5.1.}
\end{figure}

\begin{numexamp}\label{numexamp2}
For this numerical example, we consider an academic REP of size $5000\times 5000$ and with the degree of its polynomial part equal to $3$, i.e., a REP of the form
\begin{equation}\label{numex2}
R(\lambda)=\lambda^3A_3+\lambda^2A_2+\lambda A_1+A_0-E(C-\lambda D)^{-1}F^T{\color{magenta}.}
\end{equation}
The coefficient matrices of $R(\lambda)$ in \eqref{numex2}  were constructed in a similar way that in the numerical experiment \ref{numexamp1}: first, we consider a rational matrix $R_2(\lambda)=\lambda^3P_3+\lambda^2P_2+\lambda P_1+P_0-E_0(C-\lambda D)^{-1}F_0^T$ with prescribed eigenvalues, where $P_i\in \BR^{5000\times 5000}$ are diagonal matrices, $E_0=[e_1+e_2,\quad e_5+e_6], \quad F_0=[ e_{4997}+e_{4998} ,\quad e_{4999}+e_{5000}]\in\BR^{5000\times 2}$, with $e_i$ the $i$th canonical vector of size $5000$,  and
\begin{equation*}
C=\left[\begin{array}{cc} 105 & 0 \\ 0 & -105 \end{array}\right], \quad
D=I_2,
\end{equation*}
and then we define $R(\lambda)=PR_2(\lambda)Q$, where
\begin{equation*}
P=\left[ \begin{array}{ccccc} 1 & \frac{1}{2} & \frac{1}{3} & & \\ -\frac{1}{4} & \ddots& \ddots & \ddots& \\ -\frac{1}{5} & \ddots & \ddots & \ddots &\frac{1}{3} \\ & \ddots & \ddots & \ddots & \frac{1}{2} \\ & & -\frac{1}{5} & -\frac{1}{4} & 1 \end{array}\right], \quad
Q=\left[ \begin{array}{cccc} -1 & -\frac{1}{3} & & \\ \frac{1}{2} & \ddots &  \ddots & \\ & \ddots & \ddots & -\frac{1}{3} \\&  & \frac{1}{2} & -1 \end{array} \right] \in\BR^{5000\times 5000}.
\end{equation*}
The goal of this example is to compute the 30 eigenvalues closest to zero. In this situation, it is natural to choose zero as a fixed shift. In Figure \ref{numexamp2}(a), the approximate eigenvalues computed by R-CORK are displayed. By starting with a random unit complex vector, first we apply R-CORK without restarting, and after 83 iterations, the desired eigenvalues are obtained with a tolerance \eqref{res.ex} of $10^{-12}$. The convergence history can be seen in Figure \ref{numexamp2}(b). In Figure \ref{numexamp2}(d), we see that the relation $r_j<j+d$ with $j$ the number of iterations also holds in this example, though in this case with $d=3$ since this is the degree of the polynomial part in \eqref{numex2}. Figure \ref{numexamp2}(f) shows the memory costs of R-CORK and classical rational Krylov. It is observed that the reduction in cost of R-CORK is approximately a factor of $3$, i.e., the degree of the polynomial part of \eqref{numex2}.

As a final example, we solve \eqref{numex2} by using R-CORK combined with  restarting and taking a maximum subspace dimension $m=60$ which is reduced to $p=40$ after every restart. The convergence history is shown in Figure \ref{numexamp2}(c), where it is observed that after 91 iterations and 2 restarts, the 30 eigenvalues closest to zero have been found with a tolerance \eqref{res.ex} of $10^{-12}$. Despite the fact that a few more iterations are needed with restart than without restart, we see in Figure \ref{numexamp2}(e) that we are using a Krylov subspace of much smaller dimension than without restart to compute the eigenpairs. In addition, we emphasize that Figure \ref{numexamp2}(e) shows that for this particular example $r_j < j$ after the restarts, which illustrates that in practice the upper bound $d+j-1$ in Theorem \ref{th.rjRCORK} is not always attained and that the memory efficiency of R-CORK can be larger than the one theoretically expected.  Finally, the comparison of the memory costs for the R-CORK and for the classical rational Krylov methods is plotted in Figure \ref{numexamp2}(g), where we see again that the cost of R-CORK is approximately a factor $d=3$ smaller.
\begin{figure}
\begin{center}
\subfigure[Eigenvalues.]{\includegraphics[scale=0.23]{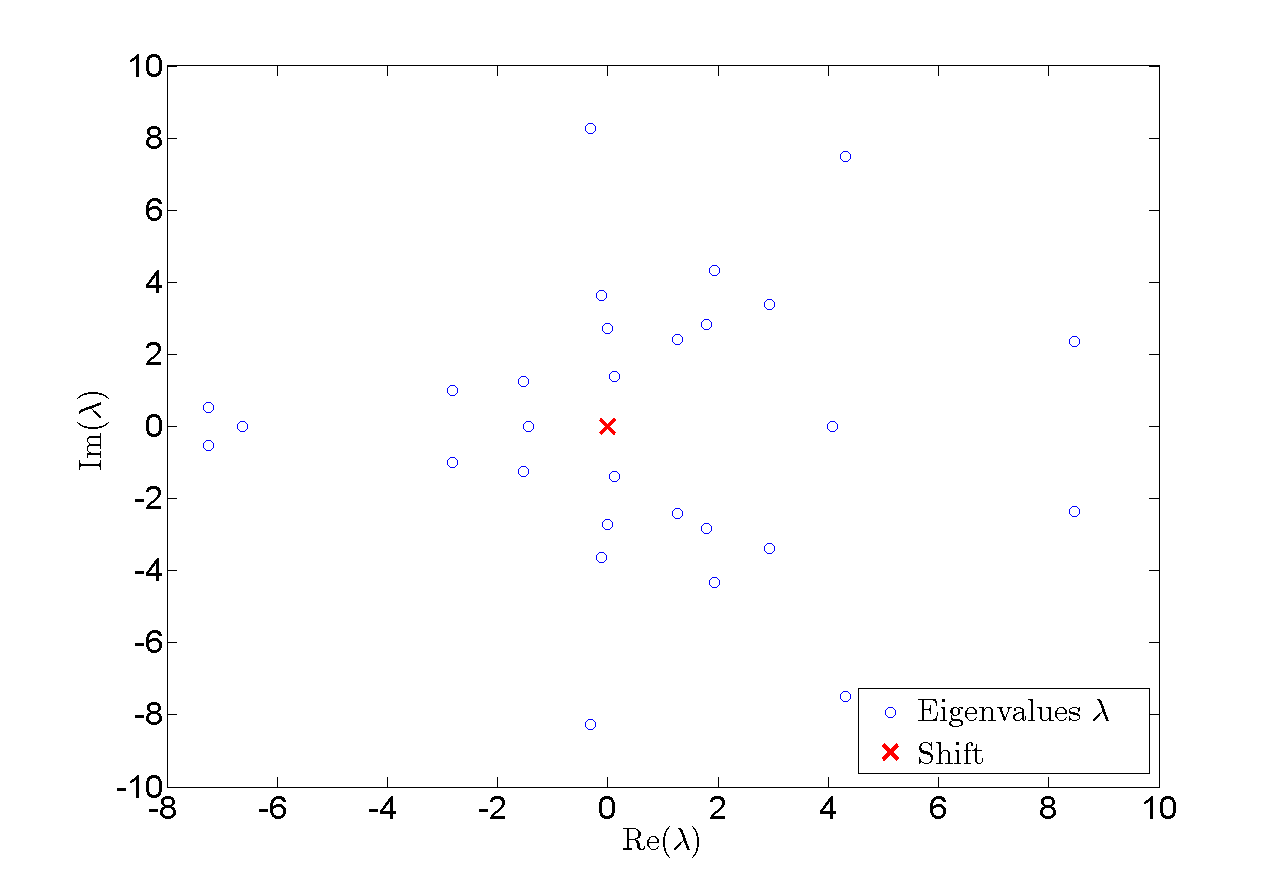}}
\end{center}\vspace{-0.5 cm}
\centering
\subfigure[Convergence history without restart.]{\includegraphics[scale=0.23]{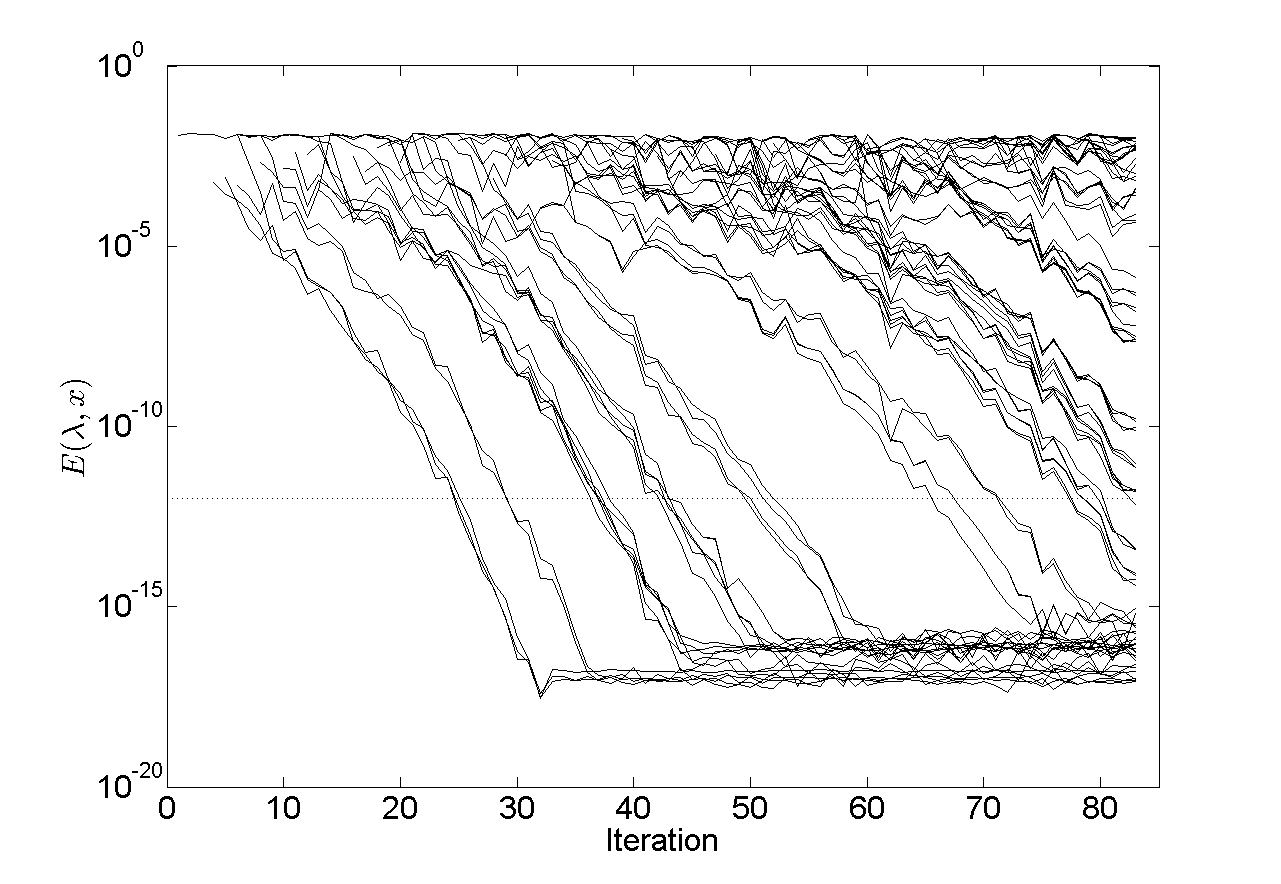}}\vspace{-0.1cm}
\subfigure[Convergence history with restart.]{\includegraphics[scale=0.23]{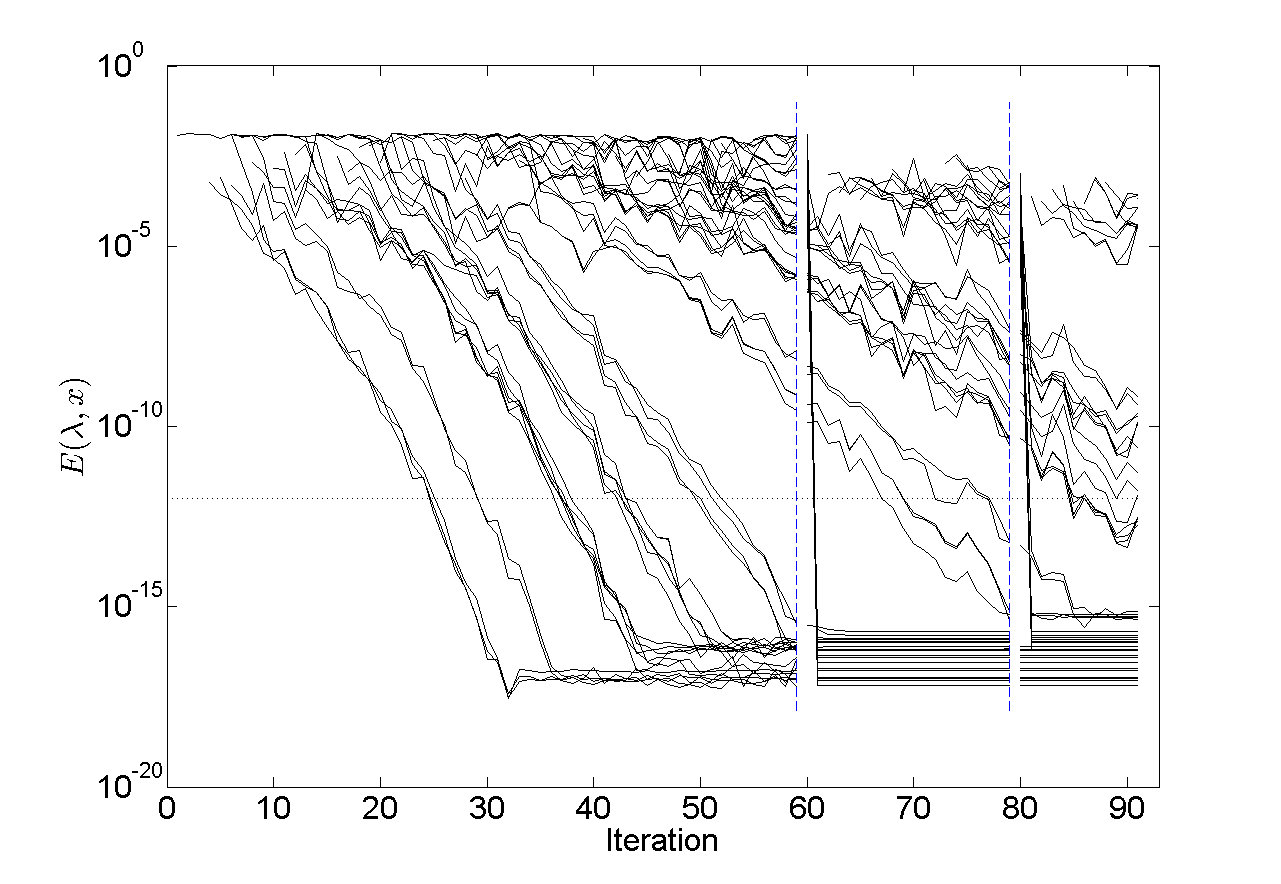}}\vspace{-0.1cm}
\subfigure[Dimension of the subspace without restart.]{{\includegraphics[scale=0.23]{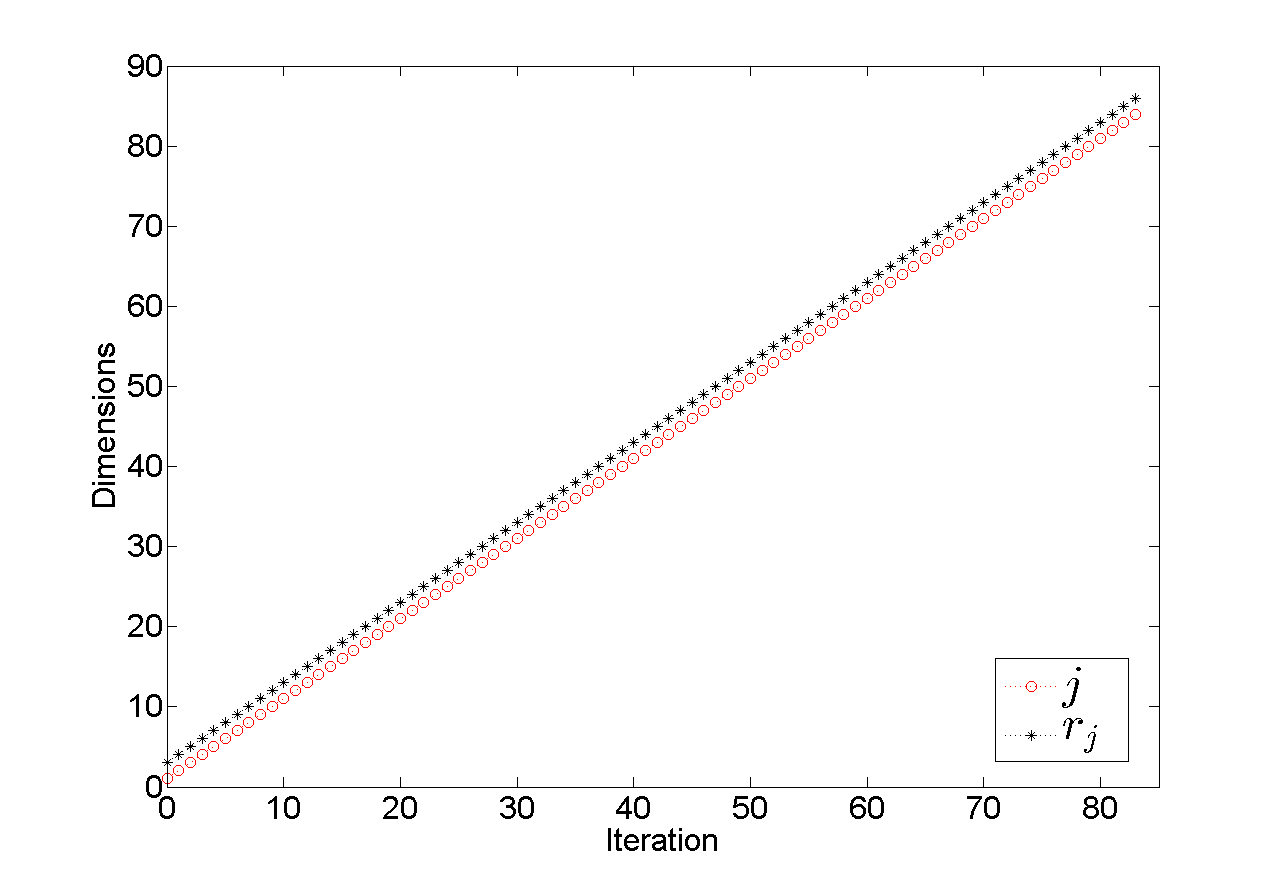}}}\vspace{-0.1cm}
\subfigure[Dimension of the subspace with restart.]{{\includegraphics[scale=0.23]{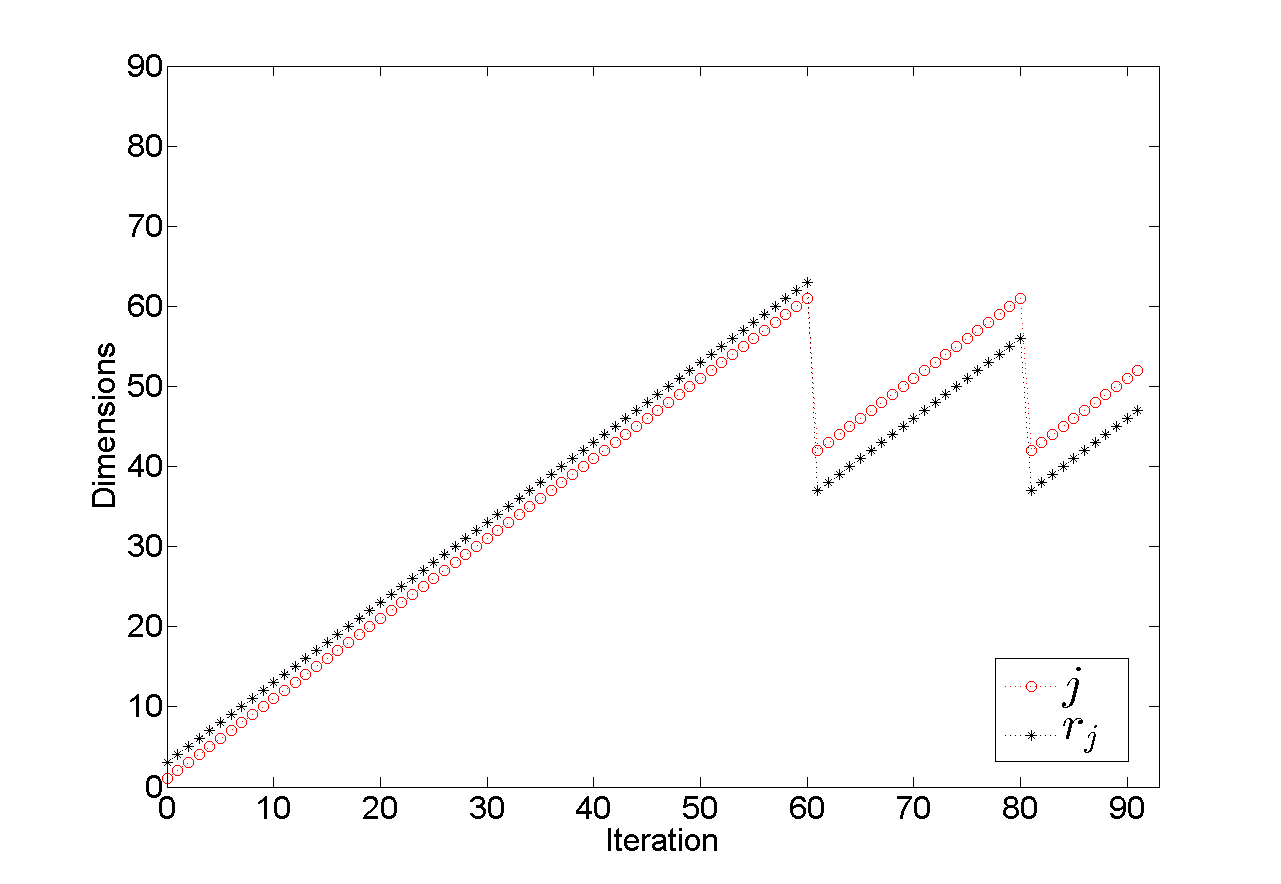}}}\vspace{-0.1cm}
\subfigure[Memory cost without restart.]{{\includegraphics[scale=0.23]{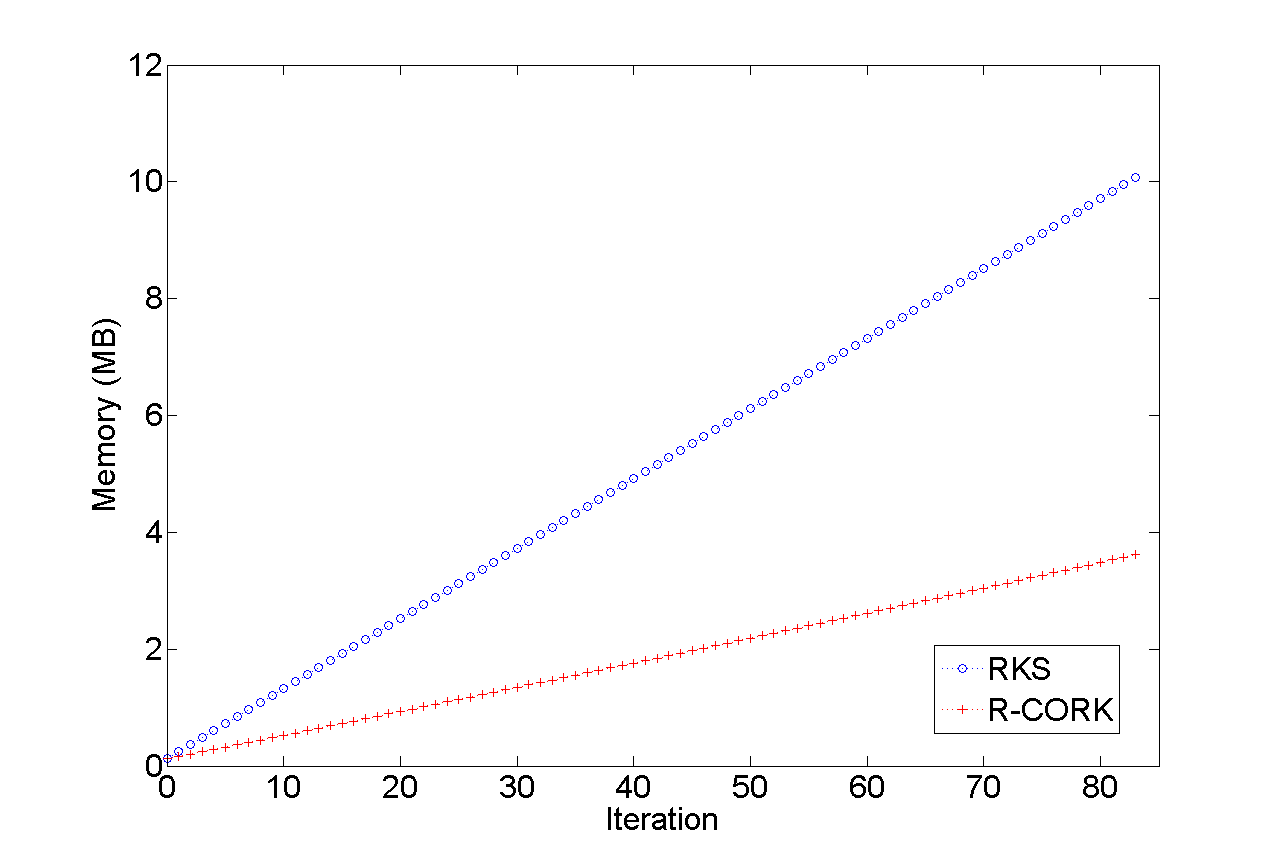}}}\vspace{-0.1cm}
\subfigure[Memory cost with restart.]{{\includegraphics[scale=0.23]{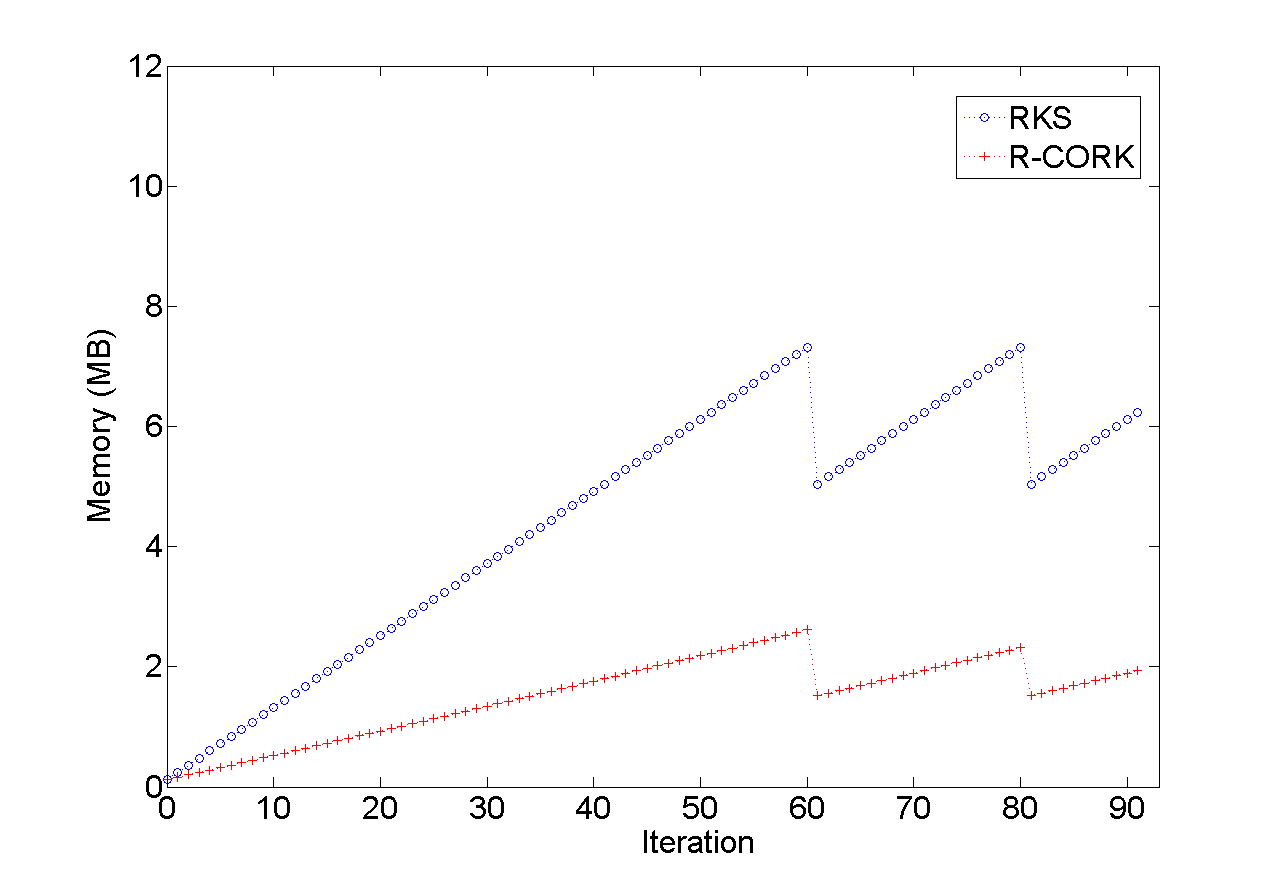}}}\\
\caption{Numerical experiment 5.2}\label{fig1}
\end{figure}
\end{numexamp}


\section{Conclusions and lines of future research} \label{sec.conclu}
In this paper, we have introduced the R-CORK method for solving large-scale rational eigenvalue problems that are represented as the sum of their polynomial and strictly proper parts as in \eqref{newreprat}. The first key idea is that R-CORK solves the generalized eigenvalue problem associated to the Frobenius companion-like linearization \eqref{linearrat} previously introduced in \cite{SuBai}. The second key idea is that R-CORK is a structured version of the classical rational Krylov method for solving generalized eigenvalue problems that takes advantage of the particular structure of \eqref{linearrat}. This structure allows us to represent the orthonormal bases of the rational Krylov subspaces of \eqref{linearrat} in a compact form involving less parameters than the bases of rational Krylov subspaces of the same dimension corresponding to unstructured generalized eigenvalue problems of the same size as the considered linearization. In addition, this compact form can be efficiently and stably updated in each rational Krylov iteration by the use of two levels of orthogonalization in the spirit of the TOAR \cite{charlaTOAR,Kressner} and the CORK \cite{cork} methods for large-scale polynomial eigenvalue problems.

The combined use of the compact representation of rational Krylov subspaces and the two levels of orthogonalization in R-CORK reduces significantly the orthogonalization and the memory costs with respect to a direct application of the classical rational Krylov method to the linearization \eqref{linearrat}. If $n\times n$ is the size of the rational eigenvalue problem, $j$ is the maximum dimension of the considered Krylov subspaces of the linearization, $d$ is the degree of the matrix polynomial $P(\la)$ in \eqref{newreprat}, and $s\times s$ is the size of the pencil $(C-\la D)$ appearing in \eqref{newreprat} (note that if $s \leq n$, then $s$ is essentially the rank of the strictly proper part of the rational matrix $R(\la)$), then the reduction in costs of R-CORK is appreciable whenever $jd \ll n$ and very considerable if, moreover, $s \ll n$ and $d < j$. In this situation, after $j$ iterations, the orthogonalization cost of R-CORK is $\cO (j^2 n)$, while the one of classical rational Krylov is $\cO (j^2 n d)$, and the memory cost of R-CORK is approximately $n j$ numbers, while the one of classical rational Krylov is $n d j$. These reductions can be combined with an structured implementation of a Krylov-Schur implicit restarting adapted to the compact representation used by R-CORK, which allows us to keep the dimension of the Krylov subspaces moderate without essentially increasing the number of iterations until convergence. The performed numerical experiments confirm all these good properties of R-CORK.

Since many linearizations of rational matrices different from \eqref{linearrat} have been developed very recently \cite{behera,ADMZ2016} and some of them include the option of considering that the matrix polynomial $P(\la)$ in \eqref{newreprat} is expressed in non-monomial bases, an interesting line of future research on the numerical solution of rational eigenvalue problems is to investigate the potential extension of the R-CORK strategy to other linearizations.

\bigskip
\noindent
{\bf Acknowledgements.} The authors sincerely thank Roel Van Beeumen and Karl Meerbergen for answering patiently and very carefully many questions on the CORK method they developed in \cite{cork}. Their help has been very important for improving somes parts of this manuscript.

\nocite{*}
\bibliographystyle{plain}

\end{document}